\def\draftdate{October 5, 2010}
\newcommand{\fps}[1]{\mathbf{#1}}
\newcommand{\fpm}{\fps{m}}
\newcommand{\fpn}{\fps{n}}
\newcommand{\fpp}{\fps{p}}
\newcommand{\fpz}{\fps{0}}
\newcommand{\fpo}{\fps{1}}
\newcommand{\fpt}{\fps{2}}
\newcommand{\fus}[1]{\underline{#1}}
\newcommand{\fum}{\fus{m}}
\newcommand{\fun}{\fus{n}}
\newcommand{\fup}{\fus{p}}
\newcommand{\fuo}{\fus{1}}
\newcommand{\fuz}{\fus{0}}
\newcommand{\aom}{\vec m}
\newcommand{\aon}{\vec n}
\newcommand{\aop}{\vec p}
\newcommand{\aob}[1]{\vec{#1}}
\newcommand{\Seg}{\prime}
\newcommand{\aPlax}{\aP^{\textrm{lax}}}
\newcommand{\Cat}{\mathbf{Cat}}
\newcommand{\boxbin}{\mathbin{\Box}}
\newcommand{\groth}{\mathbin{\myop\int}}
\DeclareMathOperator{\simp}{\aS}
\mathchardef\varDelta="7101
\def\myop#1{\mathop{\textstyle #1}\limits}
\let\iso\cong
\let\sma\wedge
\renewcommand{\to}{\mathchoice{\longrightarrow}{\rightarrow}{\rightarrow}{\rightarrow}}
\newcommand{\from}{\mathchoice{\longleftarrow}{\leftarrow}{\leftarrow}{\leftarrow}}
\let\catsymbfont\mathcal
\newcommand{\aA}{{\catsymbfont{A}}}
\newcommand{\aC}{{\catsymbfont{C}}}
\newcommand{\aD}{{\catsymbfont{D}}}
\newcommand{\aE}{{\catsymbfont{E}}}
\newcommand{\aF}{{\catsymbfont{F}}}
\newcommand{\aK}{{\catsymbfont{K}}}
\newcommand{\aP}{{\catsymbfont{P}}}
\newcommand{\aN}{{\catsymbfont{N}}}
\newcommand{\aS}{{\catsymbfont{S}}}
\newcommand{\aW}{{\catsymbfont{W}}}
\newcommand{\aX}{{\catsymbfont{X}}}
\newcommand{\aY}{{\catsymbfont{Y}}}
\newcommand{\aZ}{{\catsymbfont{Z}}}
\newcommand{\bZ}{{\mathbb{Z}}}
\def\quickop#1{\expandafter\DeclareMathOperator\csname
#1\endcsname{#1}}
\numberwithin{equation}{section}
\newtheorem{thm}[equation]{Theorem}
\newtheorem*{thm*}{Theorem}
\newtheorem{cor}[equation]{Corollary}
\newtheorem{lem}[equation]{Lemma}
\newtheorem{prop}[equation]{Proposition}
\theoremstyle{definition}
\newtheorem{defn}[equation]{Definition}
\newtheorem{cons}[equation]{Construction}
\newtheorem{notn}[equation]{Notation}
\theoremstyle{remark}
\newtheorem{var}[equation]{Variant}
\def\enref#1{(\ref{#1})}
\newcommand{\term}[1]{\textit{#1}}
\begin{document}

\title%
{An Inverse $K$-Theory Functor}

\author{Michael A. Mandell}
\address{Department of Mathematics, Indiana University,
Bloomington, IN \ 47405}
\email{mmandell@indiana.edu}
\thanks{The author was supported in part by NSF grant DMS-0804272}

\date{\draftdate}
\subjclass[2000]{Primary 19D23,55P47; Secondary 18D10,55P42}

\begin{abstract}
Thomason showed that the $K$-theory of symmetric monoidal categories models
all connective spectra.  This paper describes a new construction
of a permutative category from a $\Gamma$-space, which is then used to re-prove
Thomason's theorem and a non-completed variant. 
\end{abstract}

\maketitle

\section{Introduction}

In \cite{SegalGamma}, Segal described a functor from (small) symmetric
monoidal categories to infinite loop spaces, or equivalently,
connective spectra.  This functor is often called the $K$-theory
functor: When applied to the symmetric monoidal category
of finite rank projective modules over a ring $R$, the resulting
spectrum is Quillen's algebraic $K$-theory of $R$. A natural question
is then which connective spectra arise as the $K$-theory of symmetric
monoidal categories?  Thomason answered this question in
\cite{ThomasonSymMon}, showing that every connective spectrum is the
$K$-theory of a symmetric monoidal category; moreover, he showed that
the $K$-theory functor is an equivalence between an appropriately defined
stable homotopy category of symmetric monoidal categories and the
stable homotopy category of connective spectra.

This paper provides a new proof of Thomason's theorem by constructing
a new homotopy inverse to Segal's $K$-theory functor.  As a model for
the category of infinite loop 
spaces, we work with $\Gamma$-spaces, following the usual conventions
of~\cite{Anderson,BousfieldFriedlander}: We understand a
$\Gamma$-space to be a functor $X$ from $\Gamma^{\op}$ (finite based
sets) to based simplicial sets such that $X(\fpz)=*$.  A $\Gamma$-space has
an associated spectrum \cite[\S1]{SegalGamma} (or
\cite[\S4]{BousfieldFriedlander} with these conventions), and a
map of $\Gamma$-spaces $X\to Y$ is called
a \term{stable equivalence} when it induces a stable equivalence of
the associated spectra.  We understand the stable homotopy category of
$\Gamma$-spaces to be the homotopy category obtained by formally
inverting the stable equivalences.  The foundational theorem of Segal
\cite[3.4]{SegalGamma}, \cite[5.8]{BousfieldFriedlander} is that the
stable homotopy category of $\Gamma$-spaces is equivalent to the stable
category of connective spectra.

On the other side, the category of small symmetric monoidal categories
admits a number of variants, all of which have equivalent stable
homotopy categories.  We discuss some of these variants in
Section~\ref{secrevk}
below.  For definiteness, we state the main theorem in terms of the
category of small permutative categories and strict maps: The objects
are the small permutative categories, i.e., those symmetric monoidal
categories with strictly associative and unital product, and the maps
are the functors that strictly preserve the product, unit, and
symmetry.  Segal \cite[\S2]{SegalGamma} constructed $K$-theory as a
composite functor $K^{\Seg}=N\circ \aK^{\Seg}$ from permutative
categories to $\Gamma$-spaces, where $\aK^{\Seg}$ is a functor from
permutative categories to $\Gamma$-categories, and $N$ is the nerve
construction 
applied objectwise to a $\Gamma$-category to obtain a $\Gamma$-space.  We
actually use a slightly different but weakly equivalent functor
$K=N\circ \aK$ described in Section~\ref{secrevk}.  A
\term{stable equivalence} of permutative categories is defined to be a
map that induces a stable equivalence on $K$-theory $\Gamma$-spaces.  (We
review an equivalent more intrinsic homological definition of stable
equivalence in Proposition~\ref{propQuil} below.)  We understand the stable
homotopy category 
of small permutative categories to be the homotopy category obtained
from the category of small permutative categories by formally
inverting the stable equivalences.

In Section~\ref{secconsp}, we construct a functor $P$ from
$\Gamma$-spaces to small permutative categories. Like $K$, we
construct $P$ as 
a composite functor $P=\aP\circ \simp$, with $\aP$ a functor from
$\Gamma$-categories to permutative categories and $\simp$ a functor from
simplicial sets to categories applied objectwise.  The functor $\simp$
is the left adjoint of the Quillen equivalence between the category of
small categories and the category of simplicial sets from
\cite{nonmodel,ThomasonCatModel}; the right adjoint is $\Ex^{2}N$, where $\Ex$
is Kan's right adjoint to the subdivision functor $\Sd$. As we
review in Section~\ref{secrevgamm}, we have natural transformations
\begin{equation}\label{eqnatns}
N\simp X\from \Sd^{2}X\to X
\qquad \text{and}\qquad 
\simp N \aX\to \aX
\end{equation}
which are always weak equivalences,
where we understand a weak  
equivalence of categories as a functor that induces a weak
equivalences on nerves.  The functor $\aP$ from $\Gamma$-categories to
permutative categories is a certain Grothendieck construction
(homotopy colimit)
\[
\aP(\aX)=\aA\groth A\aX, 
\]
we describe in detail in Section~\ref{secconsp}.  In brief, $\aA$ is a category
whose objects are the sequences of positive integers
$\aom=(m_{1},\dotsc,m_{r})$ including the empty sequence, and whose
morphisms are generated by permuting the sequence, maps of finite
(unbased) sets, and partitioning; for a $\Gamma$-category $X$, we get a
(strict) functor $A\aX$ from $\aA$ to the category of small categories
satisfying
\[
A\aX(m_{1},\dotsc,m_{r})=\aX(\fpm_{1})\times \dotsb \times \aX(\fpm_{r})
\]
and $A\aX()=\aX(\fpz)=*$ (the category with a unique object $*$ and
unique morphism).  The concatenation of sequences induces the
permutative product on $\aP\aX$ with $*$ in $A\aX()$ as the unit.  In
Section~\ref{secconsp}, 
we construct a natural transformation of permutative categories and
natural transformations of $\Gamma$-categories
\begin{equation}\label{eqnatcat}
\aP\aK\aC\to \aC \qquad \text{and}\qquad \aX\from \aW\aX\to \aK\aP\aX,
\end{equation}
where $\aW$ is a certain functor from $\Gamma$-categories to itself
(Definition~\ref{defW}).  In Section~\ref{secpf}, we show that these natural
transformations 
are natural stable equivalences, which then proves the following
theorem, the main theorem of the paper. 

\begin{thm}\label{maincat}
The functor $P$ from $\Gamma$-spaces to small permutative
categories preserves stable equivalences.  It induces an equivalence
between the stable homotopy category of $\Gamma$-spaces and the stable
homotopy category of permutative categories, inverse to Segal's
$K$-theory functor.
\end{thm}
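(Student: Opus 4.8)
The plan is to deduce the theorem formally from the natural maps \eqref{eqnatns} and \eqref{eqnatcat}, once the maps of \eqref{eqnatcat} are known to be stable equivalences---which is precisely what Section~\ref{secpf} supplies. First I would assemble two natural zig-zags of stable equivalences. For a $\Gamma$-space $X$, take $\aX=\simp X$ in \eqref{eqnatcat}, apply the nerve objectwise, and splice in \eqref{eqnatns}:
\[
X\xleftarrow{\ \sim\ }\Sd^{2}X\xrightarrow{\ \sim\ }N\simp X\xleftarrow{\ \sim\ }N\aW\simp X\xrightarrow{\ \sim\ }N\aK\aP\simp X=KPX .
\]
The first two maps are objectwise weak equivalences of $\Gamma$-spaces by \eqref{eqnatns}, hence stable equivalences; the last two are the objectwise nerves of the maps of \eqref{eqnatcat}, hence stable equivalences of $\Gamma$-spaces by Section~\ref{secpf} together with the definition of stable equivalence of $\Gamma$-categories. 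For a permutative category $\aC$, take $\aX=\aK\aC$ in the second transformation of \eqref{eqnatns}, apply $\aP$, and follow with the first transformation of \eqref{eqnatcat}:
\[
PK\aC=\aP\simp N\aK\aC\xrightarrow{\ \sim\ }\aP\aK\aC\xrightarrow{\ \sim\ }\aC .
\]
The second map is a stable equivalence by Section~\ref{secpf}; the first is $\aP$ applied to an objectwise weak equivalence of $\Gamma$-categories, and the Grothendieck construction takes such a map to a strict map of permutative categories which is a weak equivalence on underlying categories, hence a stable equivalence (cf.\ Section~\ref{secconsp}).

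Next I would check that $P$ preserves stable equivalences. Given a stable equivalence $f\colon X\to Y$ of $\Gamma$-spaces, naturality of the first zig-zag produces a ladder of commutative squares whose horizontal arrows are all stable equivalences and whose left-hand vertical arrow is $f$. Stable equivalences of $\Gamma$-spaces satisfy the two-out-of-three property, being detected on associated spectra, so working along the ladder one square at a time---each step comparing a composite of two known stable equivalences---shows successively that $\Sd^{2}X\to\Sd^{2}Y$, then $N\simp X\to N\simp Y$, then $N\aW\simp X\to N\aW\simp Y$, and finally $KPX\to KPY$ are stable equivalences. Hence $Pf$ is a stable equivalence of permutative categories.

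Finally, $K$ sends stable equivalences to stable equivalences by the very definition of a stable equivalence of permutative categories, and $P$ does so by the previous step, so both descend to functors $\bar P$ and $\bar K$ on the localized stable homotopy categories. In those localizations every stable equivalence becomes invertible, so the two natural zig-zags above descend to natural isomorphisms $\bar K\bar P\iso\Id$ and $\bar P\bar K\iso\Id$; therefore $\bar P$ is an equivalence with inverse $\bar K$. Since $K$ is weakly equivalent to $K^{\Seg}$ (Section~\ref{secrevk}), $\bar K$ is the functor induced by Segal's $K$-theory, which yields the final assertion. The real work---and the main obstacle---lies entirely in Section~\ref{secpf}: verifying that the comparison maps \eqref{eqnatcat} are stable equivalences, where the combinatorics of $\aA$, the auxiliary functor $\aW$, and a cofinality or homotopy-colimit comparison of $\aP$ with the Segal spectrum must do their work; everything above is purely formal once that is in hand, the only mild point being that $\aP$ respects objectwise weak equivalences of $\Gamma$-categories.
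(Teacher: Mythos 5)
Your formal scaffolding matches the paper's reduction: everything comes down to the two comparison maps of \eqref{eqnatcat} being stable equivalences, after which the splicing with \eqref{eqnatns}, the two-out-of-three ladder for preservation of stable equivalences, and the descent to localized categories are all correct and routine. The second zig-zag ($PK\aC\to\aC$) is indeed fully covered by previously established results: $\alpha$ is a weak equivalence by Corollary~\ref{corPK} and $\aP$ preserves weak equivalences. The gap is in the first zig-zag. You need $\upsilon\colon \aW\simp X\to\aK\aP\simp X$ to be a \emph{stable} equivalence for an \emph{arbitrary} $\Gamma$-space $X$, and $\simp X$ is generally not special, so Corollary~\ref{corKP} does not apply. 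Writing that this ``is precisely what Section~\ref{secpf} supplies'' is circular: that claim \emph{is} the content of the paper's proof of Theorem~\ref{maincat}, not one of its inputs. It also cannot be extracted formally from $\aP\aK\simeq\Id$ alone: that would give $\aK\aP\simeq\Id$ only on the essential image of $\aK$, and knowing that image exhausts all $\Gamma$-spaces up to stable equivalence is Thomason's theorem, i.e., the statement being proved. Without this step your argument establishes only the special/uncompleted half, essentially Theorem~\ref{thmuncomp}.

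The missing argument is a retraction argument. For a very special $\Gamma$-category $\aZ$ one shows $\upsilon^{*}\colon[\aK\aP\aX,\aZ]\to[\aW\aX,\aZ]$ is a bijection, where $[-,-]$ denotes maps after inverting the weak equivalences. Define $R(f)=\omega\circ\upsilon^{-1}\circ\aK\aP f\circ(\aK\aP\omega)^{-1}$; this makes sense because $\aZ$ and $\aK\aP\aX$ are special, so the relevant instances of $\upsilon$ (on $\aW\aZ$ and on $\aW\aK\aP\aX$) are weak equivalences by Corollary~\ref{corKP}, and because $\aK$ and $\aP$ preserve weak equivalences and hence descend to the homotopy category. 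That $R\circ\upsilon^{*}=\id$ follows from naturality of $\omega$ and $\upsilon$; that $\upsilon^{*}\circ R=\id$ is the genuinely nontrivial point and requires the compatibility, up to natural transformation, of $\aK\alpha$ with both $\upsilon$ and $\omega$ --- this is Lemma~\ref{lemtechlem}, whose proof needs the explicit descriptions of $\alpha$, $\iota$, and $\aW$. Some version of this (or another argument handling non-special $\aX$) must be supplied before your otherwise correct formal deductions go through.
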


The arguments actually prove a ``non-group-completed'' version of this
theorem.  To explain this, recall that a $\Gamma$-space $X$ is called
\term{special} \cite[p.~95]{BousfieldFriedlander} when the canonical
map $X(\fps{a}\vee \fps{b})\to X(\fps{a})\times X(\fps{b})$ is a weak
equivalence for any finite based sets $\fps{a}$ and $\fps{b}$; we
define a special $\Gamma$-category analogously.  Note that because of the
weak equivalences in~\eqref{eqnatns}, a $\Gamma$-category $\aX$ is
special if and only if the $\Gamma$-space $N\aX$ is special, and a $\Gamma$-space $X$ is special if and only if the $\Gamma$-category $\simp X$ is
special.  For special $\Gamma$-spaces, the associated spectrum is an
$\Omega$-spectrum after the zeroth space \cite[1.4]{SegalGamma}; the
associated infinite loop space is the group completion of $X(\fpo)$.
For any permutative category $\aC$, $K\aC$ is a special $\Gamma$-space
and $\aK\aC$ is a special $\Gamma$-category.  We show in
Corollary~\ref{corPK} that 
the natural transformation $\aP\aK\aC\to \aC$ of~\eqref{eqnatcat} is a
weak equivalence for any permutative category $\aC$, and we show in
Theorem~\ref{thmW} and Corollary~\ref{corKP} that the natural
transformations $W\aX\to\aX$ and 
$W\aX\to \aK\aP\aX$ of~\eqref{eqnatcat} are (objectwise) weak
equivalences for any special $\Gamma$-category $\aX$.  We obtain the
following theorem. 

\begin{thm}\label{thmuncomp}
The following homotopy categories are equivalent:
\begin{enumerate}
\item The homotopy category obtained from the category of small
permutative categories by inverting the weak equivalences.
\item The homotopy category obtained from the subcategory of special
$\Gamma$-spaces by inverting the objectwise weak equivalences.
\end{enumerate}
\end{thm}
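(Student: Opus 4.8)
The plan is to exhibit Segal's $K$-theory functor $K=N\circ\aK$ and the functor $P=\aP\circ\simp$ of Section~\ref{secconsp} as the desired inverse equivalences. Concretely, I would check three things: that $K$ and $P$ carry the relevant weak equivalences to weak equivalences (so that they descend to the localized categories), that $K$ takes values in special $\Gamma$-spaces, and that the composites $PK$ and $KP$ are connected to the respective identity functors by finite zig-zags of natural transformations which are objectwise weak equivalences and which, in the $KP$ case, stay inside the subcategory of special $\Gamma$-spaces. Granting these, the theorem follows from the standard fact that a pair of functors between relative categories which preserve weak equivalences and whose composites in both orders are linked to the identities by such zig-zags induces inverse equivalences of localizations; note that the intermediate functors occurring in a zig-zag need not themselves be homotopical, since each transformation is inverted in the localization no matter what.

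First I would dispose of the preservation statements. That $K\aC$ is always special is already recorded in the introduction; that $K$ preserves weak equivalences of permutative categories follows from the facts, established in Section~\ref{secrevk}, that $\aK\aC$ is special and that $\aK\aC(\fpo)$ is naturally weakly equivalent to $\aC$: a weak equivalence $\aC\to\aC'$ then induces a weak equivalence on the value at $\fpo$, hence by the Segal maps and two-out-of-three an objectwise weak equivalence $\aK\aC\to\aK\aC'$, which $N$ carries to an objectwise weak equivalence $K\aC\to K\aC'$. For $P$: since every simplicial set is cofibrant, the left Quillen functor $\simp$ preserves all weak equivalences, and so applied objectwise preserves objectwise weak equivalences of $\Gamma$-spaces; and $\aP$ is a Grothendieck construction, whose nerve is a homotopy colimit, so it preserves objectwise weak equivalences of $\Gamma$-categories. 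Hence $P$ preserves objectwise weak equivalences of $\Gamma$-spaces, in particular of special ones.

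Next I would assemble the two zig-zags. For $PK$, applying $\aP$ to the weak equivalence $\simp N\aK\aC\to\aK\aC$ of~\eqref{eqnatns} and then composing with $\aP\aK\aC\to\aC$ of~\eqref{eqnatcat} gives a natural chain
\[
PK\aC=\aP\simp N\aK\aC\to\aP\aK\aC\to\aC
\]
whose two legs are weak equivalences of permutative categories, the first by the preservation statement above and the second by Corollary~\ref{corPK}. For $KP$, let $X$ be a special $\Gamma$-space; then $\simp X$ is a special $\Gamma$-category, so applying $N$ objectwise to~\eqref{eqnatcat} and invoking Theorem~\ref{thmW} and Corollary~\ref{corKP} yields objectwise weak equivalences $N\simp X\from N\aW\simp X\to N\aK\aP\simp X=KPX$, and combining with~\eqref{eqnatns} produces the natural zig-zag
\[
X\from\Sd^{2}X\to N\simp X\from N\aW\simp X\to KPX
\]
of objectwise weak equivalences of $\Gamma$-spaces. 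The point requiring attention is that every intermediate term here is again special: specialness is invariant under objectwise weak equivalence, $\simp X$ is special because $X$ is, and $N$ carries special $\Gamma$-categories to special $\Gamma$-spaces, so $\Sd^{2}X$, $N\simp X$, and $N\aW\simp X$ are all special; thus this zig-zag lies entirely within the subcategory of statement~(ii).

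Combining the preservation statements with these zig-zags and the general localization principle above completes the argument. I expect no serious obstacle: the real content is entirely contained in the already-cited Corollary~\ref{corPK}, Theorem~\ref{thmW}, and Corollary~\ref{corKP}, together with the comparison~\eqref{eqnatns} between the categorical and simplicial worlds. The one place to be careful is verifying that specialness propagates through~\eqref{eqnatns} and through the maps of~\eqref{eqnatcat}, so that the $KP$-zig-zag can be chosen inside the special $\Gamma$-spaces and the restricted functors genuinely witness an equivalence of the two localizations in question rather than merely of larger ambient homotopy categories.
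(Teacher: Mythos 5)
Your proposal is correct and follows essentially the same route as the paper, which disposes of this theorem in one line by combining Proposition~\ref{propNsimp} (the $N$/$\simp$ equivalence), Theorem~\ref{thmW}, and Corollaries~\ref{corPK} and~\ref{corKP}. You have simply made explicit the zig-zags of natural weak equivalences and the bookkeeping that specialness is preserved at every stage, which the paper leaves implicit.
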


Theorem~\ref{maincat} implies that for an arbitrary $\Gamma$-space $X$,
the $\Gamma$-space $KP X$ is a special $\Gamma$-space stably equivalent to
$X$.  A construction analogous to $\aP$ on the simplicial set level
produces such a special $\Gamma$-space more directly: For a $\Gamma$-space
$X$, we get a functor $AX$ from $\aA$ to based simplicial sets with
\[
AX(m_{1},\dotsc,m_{r})=X(\fpm_{1})\times \dotsb \times X(\fpm_{r})
\]
and $AX()=X(\fpz)=*$.  Define
\[
EX=\hocolim_{\aA}AX.
\]
Using the $\Gamma$-spaces $X(\fpn\sma(-))$, we obtain a $\Gamma$-space $E^{\Gamma}X$,
\[
E^{\Gamma}X(\fpn)=E(X(\fpn\sma(-)))/N\aA,
\]
with $EX\to E^{\Gamma}X(\fpo)$ a weak equivalence.
The inclusion of $X(\fpo)$ as $AX(1)$ provides a natural
transformation of simplicial sets $X(\fpo)\to EX$ and of $\Gamma$-spaces $X\to E^{\Gamma}X$.  In Section~\ref{secss}, we prove the following
theorems about these constructions. 

\begin{thm}\label{maingamma}
For any $\Gamma$-space $X$, the $\Gamma$-space $E^{\Gamma}X$ is special and
the natural map $X\to E^{\Gamma}X$ is a stable equivalence.   If $X$
is special, then the natural map $X\to E^{\Gamma}X$ is an objectwise
weak equivalence.
\end{thm}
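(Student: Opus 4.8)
The plan is to analyze $E^\Gamma X$ by comparing it, via explicit homotopy-colimit manipulations, both to the $\Gamma$-space $X$ itself and to a product of copies of $X(\fpo)$. First I would set up the basic properties of the indexing category $\aA$: it has an initial-like object $()$ (the empty sequence), and more importantly the full subcategory on sequences of length~$\le 1$ together with the partitioning and concatenation maps governs the combinatorics. The key structural observation is that $AX(\fpn\sma(-))$ sends the length-one object $(1)$ to $X(\fpn)$, and that the natural inclusion of this object into the homotopy colimit is, up to the quotient by $N\aA$, the map $X\to E^\Gamma X$ under study. Because $N\aA$ is contractible (I expect $\aA$ to have a terminal or initial object, or at least to be filtered enough that $N\aA\simeq *$), the quotient by $N\aA$ does not change the homotopy type but rigidifies the basepoint; I would record this reduction first so that afterwards I can work with $EX=\hocolim_\aA AX$ directly.

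Next I would prove that $E^\Gamma X$ is special. The functor $\fpn\mapsto E^\Gamma X(\fpn)$ is built from $\fpn\mapsto X(\fpn\sma(-))$ by a fixed homotopy-colimit recipe, so specialness reduces to checking that $E$ applied to the external product $\Gamma$-space $X(\fps{a}\sma(-))\times X(\fps{b}\sma(-))$-type diagrams behaves correctly. Concretely, the Segal map $E^\Gamma X(\fps{a}\vee\fps{b})\to E^\Gamma X(\fps{a})\times E^\Gamma X(\fps{b})$ should be identified, using that homotopy colimits commute with finite products of diagrams up to weak equivalence after the $N\aA$-quotient, with a map that is manifestly a weak equivalence — this is where the combinatorics of $\aA$ (that concatenation of sequences realizes the wedge/sum) does the real work. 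I would isolate as a lemma the statement that $\hocolim_\aA$ of an ``additive'' diagram like $AX$ turns the wedge in the $\Gamma$-direction into a product; $EX \times EY \simeq E(X\vee Y)$-style bookkeeping, applied to $Y = X$ shifted, gives specialness.

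For the stable-equivalence claim, I would use that on $\pi_0$ and on homology $EX$ computes the ``additive'' group-completion-type construction: the $\Gamma$-space $E^\Gamma X$ being special, its associated spectrum is an $\Omega$-spectrum built from $E X = \hocolim_\aA AX$, and the map $X(\fpo)\to EX$ induces on stable homotopy the group completion — precisely the effect that passing to the associated spectrum of $X$ already has. The cleanest route is probably to invoke Theorem~\ref{maincat}: $KPX$ is special and stably equivalent to $X$, and one checks that $E^\Gamma X$ receives (or maps to) a compatible comparison with $KPX$ coming from the parallel constructions $\aP$ on categories and $E$ on simplicial sets, using the weak equivalences $N\simp\simeq\id$ from~\eqref{eqnatns}; since both $E^\Gamma X$ and $KPX$ are special and the composite $X\to E^\Gamma X$ agrees on the relevant homotopy/homology with $X\to KPX$, the map $X\to E^\Gamma X$ is a stable equivalence.

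Finally, if $X$ is special, I must upgrade ``stable equivalence'' to ``objectwise weak equivalence.'' Here I would argue directly: for $X$ special, the diagram $AX(\fpn\sma(-))$ on $\aA$ has all its spaces weakly equivalent, via the Segal maps, to products $X(\fpn)^{\times k}$ indexed by the length of the sequence, and the transition maps are the sum maps; the homotopy colimit of such a ``bar-construction-like'' diagram over $\aA$ — once quotiented by $N\aA$ — collapses to $X(\fpn)$ itself because the partitioning/concatenation structure exhibits a simplicial contraction onto the length-one piece when $X$ is special. I expect \textbf{this last step to be the main obstacle}: showing the homotopy colimit over all of $\aA$ (permutations, finite-set maps, and partitions simultaneously) degenerates to $X(\fpn)$ requires a genuine contracting-homotopy or cofinality argument on $\aA$, not just a formal property, and getting the basepoint and the $N\aA$-quotient to interact correctly is delicate. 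One natural tool is a filtration of $\aA$ by sequence length, with the associated-graded contributions shown to be inessential for special $X$ via the Segal equivalences; another is to find a cofinal subcategory of $\aA$ with contractible nerve. I would carry out the length filtration approach, reducing at each stage to a statement about $X(\fpm_1\vee\dots\vee\fpm_r)\to X(\fpm_1)\times\dots\times X(\fpm_r)$ being an equivalence, which is exactly specialness.
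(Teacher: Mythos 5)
Your treatment of specialness is essentially the paper's: the inverse to the Segal map $EX_{j+1}\to EX_{j}\times EX$ is induced by concatenation in $\aA$, and the paper completes this with explicit generalized simplicial homotopies. The other two assertions, however, have genuine gaps. The step you yourself flag as the main obstacle --- that $X(\fpo)\to EX$ is a weak equivalence for special $X$ --- is left unresolved, and neither of your candidate strategies is the one that works. The paper's device is the pair of functors $\eta\colon\aN\to\aA$ (sending $\fun$ to the length-one sequence $(n)$) and $\epsilon\colon\aA\to\aN$ (total sum): one replaces $AX$ by $BX=\epsilon^{*}X$, which maps to $AX$ by the partition maps and is objectwise weakly equivalent to it precisely when $X$ is special; then the maps between $\hocolim_{\aA}BX$ and $\hocolim_{\aN}X$ are inverse homotopy equivalences because $\epsilon\circ\eta=\Id$ while the partition maps give a natural transformation $\eta\circ\epsilon\to\Id_{\aA}$ covered by identities of $BX$; finally $\fuo$ is terminal in $\aN$. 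Note that $\eta$ is not cofinal in $\aA$ in the sense needed for homotopy colimits, so the ``cofinal subcategory'' route would not go through as stated; it is this retraction argument that does the work.

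The stable-equivalence claim for general $X$ is the more serious gap. You propose to compare $E^{\Gamma}X$ with $KPX$ via Theorem~\ref{maincat} and to conclude because the two ``agree on the relevant homotopy/homology.'' But a stable equivalence into a special target is a statement about the associated spectra, hence about the entire $\Gamma$-space structure, and the available comparison (Proposition~\ref{propcomp}, combined with the zig-zag~\eqref{eqnatns}) only identifies $EX$ with $NPX$, i.e., the level-$\fpo$ spaces. For $n>1$ the spaces $E^{\Gamma}X(\fpn)$ and $KPX(\fpn)=N\aK\aP\simp X(\fpn)$ are assembled quite differently, and no map of $\Gamma$-spaces under $X$ relating them is constructed; agreement of level-one spaces does not determine the spectrum map. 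The paper avoids this entirely and argues self-containedly: it builds a multiplication $\alpha\colon E^{2}X\to EX$ satisfying $\alpha\circ\eta=\id=\alpha\circ E\eta$ (Lemma~\ref{lemtechspace}), deduces that $\eta$ and $E^{\Gamma}\eta\colon E^{\Gamma}X\to E^{\Gamma}E^{\Gamma}X$ coincide in the strict homotopy category, and then shows by the standard retraction argument that $\eta^{*}\colon[E^{\Gamma}X,Z]\to[X,Z]$ is a bijection for every special $Z$. That monad-style identity is the idea missing from your proposal; without it (or a genuine $\Gamma$-space-level comparison with $KPX$), the stable-equivalence statement is not established.
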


\begin{thm}\label{maineinfty}
For any $\Gamma$-space $X$, the simplicial set $EX$ has the natural
structure of an $E_{\infty}$ space over the Barratt-Eccles operad (and
in particular the structure of a monoid).
\end{thm}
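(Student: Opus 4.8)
The plan is to recognize $EX=\hocolim_{\aA}AX$ as an instance of a general principle: the homotopy colimit of a strict symmetric monoidal functor out of a permutative category is an algebra over the Barratt--Eccles operad. Here the relevant permutative structure on $\aA$ is concatenation of sequences, with unit the empty sequence $()$ and symmetry the block-transposition morphisms (which lie in $\aA$); and $AX$ is strict symmetric monoidal, carrying concatenation to the iterated cartesian product, $AX(\aom\aon)=AX(\aom)\times AX(\aon)$ with $AX()=*$. Write $\aE$ for the Barratt--Eccles operad, so $\aE(n)=N\widetilde{\Sigma}_{n}$, where $\widetilde{\Sigma}_{n}$ is the translation category of $\Sigma_{n}$ --- object set $\Sigma_{n}$, a unique morphism between each ordered pair --- and the operad structure is the nerve of the block-composition functors $\widetilde{\Sigma}_{n}\times\widetilde{\Sigma}_{k_{1}}\times\dotsb\times\widetilde{\Sigma}_{k_{n}}\to\widetilde{\Sigma}_{k_{1}+\dotsb+k_{n}}$. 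The only facts about homotopy colimits I will need are functoriality in the indexing category, the Fubini isomorphism $\hocolim_{\aC\times\aD}(F\boxtimes G)\iso(\hocolim_{\aC}F)\times(\hocolim_{\aD}G)$, and that the homotopy colimit of the terminal constant diagram over a category $\aC$ is $N\aC$.

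The $\aE$-action is built as follows. For each $n$, form the functor $c_{n}\colon\widetilde{\Sigma}_{n}\times\aA^{n}\to\aA$ that sends $(\sigma;\aom_{1},\dotsc,\aom_{n})$ to $\aom_{\sigma(1)}\dotsm\aom_{\sigma(n)}$, sends the unique morphism $\sigma\to\tau$ to the block permutation $\aom_{\sigma(1)}\dotsm\aom_{\sigma(n)}\to\aom_{\tau(1)}\dotsm\aom_{\tau(n)}$, and acts componentwise on morphisms of $\aA^{n}$; this is well defined because $\aA$ contains the block permutations and they compose as permutations compose. Since $AX$ is strict symmetric monoidal, $AX\circ c_{n}$ carries $(\sigma;\aom_{1},\dotsc,\aom_{n})$ to $AX(\aom_{\sigma(1)})\times\dotsb\times AX(\aom_{\sigma(n)})$, and the factor-reordering isomorphisms onto $AX(\aom_{1})\times\dotsb\times AX(\aom_{n})$ assemble into a natural isomorphism $AX\circ c_{n}\iso D_{n}$, where $D_{n}$ is the functor $(\sigma;\aom_{1},\dotsc,\aom_{n})\mapsto AX(\aom_{1})\times\dotsb\times AX(\aom_{n})$ with $\widetilde{\Sigma}_{n}$ acting trivially; naturality in the $\widetilde{\Sigma}_{n}$-variable is precisely the compatibility of composition of block permutations with composition of the reordering isomorphisms. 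As $D_{n}$ is the pullback along the projection $\widetilde{\Sigma}_{n}\times\aA^{n}\to\aA^{n}$ of the functor $(\aom_{1},\dotsc,\aom_{n})\mapsto AX(\aom_{1})\times\dotsb\times AX(\aom_{n})$, repeated use of the Fubini isomorphism (and $\hocolim_{\widetilde{\Sigma}_{n}}(\mathrm{const}_{*})=N\widetilde{\Sigma}_{n}$) identifies $\hocolim_{\widetilde{\Sigma}_{n}\times\aA^{n}}D_{n}$ with $N\widetilde{\Sigma}_{n}\times(EX)^{n}=\aE(n)\times(EX)^{n}$, while functoriality of $\hocolim$ applied to $c_{n}$ produces a map to $\hocolim_{\aA}AX=EX$. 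The composite
\[
\theta_{n}\colon\aE(n)\times(EX)^{n}\iso\hocolim_{\widetilde{\Sigma}_{n}\times\aA^{n}}(AX\circ c_{n})\longrightarrow EX
\]
is the candidate operad action.

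It then remains to verify the operad axioms for $\{\theta_{n}\}$. Equivariance under the symmetric-group actions on $\aE(n)$ and on $(EX)^{n}$ will follow from the $\Sigma_{n}$-action on $\widetilde{\Sigma}_{n}\times\aA^{n}$ --- right translation on the first factor together with permutation of the remaining factors --- under which $c_{n}$ is strictly invariant; the unit axioms hold because $\widetilde{\Sigma}_{1}$ is terminal (so $\theta_{1}=\mathrm{id}_{EX}$) and because $\aE(0)=*$ is carried to the vertex $AX()=*$ of $EX$; and compatibility of the $\theta_{n}$ with the operad composition reduces, via the Fubini isomorphism and the strict associativity and unitality of concatenation in $\aA$, to the block-composition identities defining the operad structure on $\{\widetilde{\Sigma}_{n}\}$. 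I expect this last point to be the main obstacle: although each step is formal, one must track carefully which reordering isomorphism and which block permutation occurs at each stage of the iterated homotopy-colimit argument and confirm that the resulting associativity and equivariance diagrams commute --- this is, in essence, the coherence theorem for symmetric monoidal categories applied to $\aA$, and writing it out cleanly is the real work. The parenthetical claim is then immediate: restricting $\{\theta_{n}\}$ to the identity-permutation vertices of the $\widetilde{\Sigma}_{n}$, which are compatible with operad composition since the block composite of identity permutations is again an identity, yields a strictly associative and unital multiplication on $EX$; equivalently, this multiplication is the map $EX\times EX=\hocolim_{\aA\times\aA}(AX\boxtimes AX)\to\hocolim_{\aA}AX=EX$ induced by the concatenation functor $\aA\times\aA\to\aA$.
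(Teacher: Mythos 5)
Your construction is correct and is essentially the paper's own: the paper likewise uses the permutative (concatenation) structure on $\aA$, the functors $\aA^{\times n}\to\aA$ indexed by $\sigma\in\Sigma_{n}$ together with the block-permutation natural transformations between them, and the identification $\aE(n)\times(EX)^{\times n}\iso NT\Sigma_{n}\times\hocolim_{\aA^{\times n}}AX^{\times n}$ to produce the action maps, leaving the operad axioms as a direct check of definitions. Your packaging of this data as a single functor $c_{n}$ out of $\widetilde{\Sigma}_{n}\times\aA^{n}$ plus a Fubini isomorphism is just a slightly more explicit rendering of the same argument.
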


The previous two theorems functorially produce two additional
infinite loop spaces from the $\Gamma$-space $X$, the infinite loop
space of the spectrum associated to $E^{\Gamma}X$ and the group
completion of $EX$.  Since the map $X\to E^{\Gamma}X$ is a stable
equivalence, it induces a stable equivalence of the
associated spectra and hence the associated infinite loop spaces.  The
celebrated theorem of May and Thomason \cite{MayThomason} then
identifies the group completion of $EX$.

\begin{cor}
For any $\Gamma$-space $X$, the group completion of the $E_{\infty}$
space $EX$ is equivalent to the infinite loop space associated to $X$.
\end{cor}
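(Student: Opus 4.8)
The plan is to deduce this from Theorems~\ref{maingamma} and~\ref{maineinfty} together with the uniqueness theorem of May and Thomason \cite{MayThomason}. The idea is that the $\Gamma$-space machine applied to $E^{\Gamma}X$ and the operadic machine applied to $EX$ compute equivalent spectra, and the former is, by Theorem~\ref{maingamma}, the spectrum associated to $X$; taking zeroth spaces then gives the statement.

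First I would organize the constructions of Section~\ref{secss} into a single object carrying both structures at once. The assignment $\fpn\mapsto E^{\Gamma}X(\fpn)=E(X(\fpn\sma(-)))/N\aA$ is not merely a $\Gamma$-space: the Barratt-Eccles operad action produced in the proof of Theorem~\ref{maineinfty} comes from the concatenation (symmetric monoidal) structure on $\aA$ and is therefore natural in the $\Gamma$-space variable, so $\fpn\mapsto E^{\Gamma}X(\fpn)$ is a $\Gamma$-object in $E_{\infty}$ spaces over the Barratt-Eccles operad, with underlying $\Gamma$-space $E^{\Gamma}X$ and value $EX$ at $\fpo$. By Theorem~\ref{maingamma} the underlying $\Gamma$-space $E^{\Gamma}X$ is special, so this is a \emph{special} $\Gamma$-object in $E_{\infty}$ spaces, which is exactly the kind of common input to which the comparison of \cite{MayThomason} applies.

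Next, apply the May--Thomason theorem to this object: feeding the underlying special $\Gamma$-space $E^{\Gamma}X$ to Segal's machine and feeding the underlying $E_{\infty}$ space $EX$ to the Barratt-Eccles operadic machine produce equivalent spectra; passing to zeroth spaces, the infinite loop space associated to $E^{\Gamma}X$ is the group completion of $EX$. (This is consistent with \cite[1.4]{SegalGamma}, which identifies the infinite loop space of the special $\Gamma$-space $E^{\Gamma}X$ with the group completion of $E^{\Gamma}X(\fpo)$, together with the weak equivalence $EX\to E^{\Gamma}X(\fpo)$.) Finally, the map $X\to E^{\Gamma}X$ is a stable equivalence by Theorem~\ref{maingamma}, hence induces an equivalence of associated spectra and therefore of associated infinite loop spaces. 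Composing the two identifications, the group completion of $EX$ is equivalent to the infinite loop space associated to $X$.

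The main obstacle I anticipate is organizational rather than conceptual: one must check that the Barratt-Eccles action of Theorem~\ref{maineinfty} really is the one compatible with the $\Gamma$-space maps defining $E^{\Gamma}X$, so that the two structures assemble honestly into a single special $\Gamma$-object in $E_{\infty}$ spaces and the hypotheses of \cite{MayThomason} are met; there is also routine bookkeeping in matching conventions (based versus unbased, the quotient by $N\aA$, and the weak equivalence $EX\to E^{\Gamma}X(\fpo)$) so that ``group completion'' refers to the same construction throughout.
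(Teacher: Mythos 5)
Your proposal is correct and follows essentially the same route as the paper: the paper's own justification is exactly the paragraph preceding the corollary, which combines Theorem~\ref{maingamma} (the stable equivalence $X\to E^{\Gamma}X$, hence equivalent associated infinite loop spaces) and Theorem~\ref{maineinfty} with the May--Thomason uniqueness theorem to identify the group completion of $EX$. Your additional remarks on assembling $E^{\Gamma}X$ into a special $\Gamma$-object in $E_{\infty}$ spaces as the common input for \cite{MayThomason} are a reasonable elaboration of the step the paper leaves implicit.
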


As a consequence of Theorem~\ref{maingamma}, the
$\Gamma$-space $E^{\Gamma}X$ is homotopy initial among maps from $X$ to a
special $\Gamma$-space.  Theorem~\ref{maineinfty} then identifies
$EX\simeq E^{\Gamma}X(\fpo)$ as a reasonable candidate for the (non-completed)
$E_{\infty}$ space of $X$.  Motivated by Theorem~\ref{thmuncomp}, we
propose the following definition. 

\begin{defn}
We say a map of $\Gamma$-spaces $X\to Y$ is a \term{pre-stable equivalence}
when the map $EX\to EY$ is a weak equivalence.
\end{defn}

With this definition we obtain the equivalence of the last three
homotopy categories in the 
following theorem from the theorems above.  We have included the first
category for easy comparison with other non-completed theories of
$E_{\infty}$ spaces; we prove the equivalence in Section~\ref{secss}.

\begin{thm}\label{mainfouruncomp}
The following homotopy categories are equivalent:
\begin{enumerate}
\item\label{enmfa} The homotopy category obtained from the category of
$E_{\infty}$ spaces over the Barratt-Eccles operad (in simplicial
sets) by inverting the weak equivalences.
\item\label{enmfb} The homotopy category obtained from the category of
$\Gamma$-spaces by inverting the pre-stable equivalences.
\item\label{enmfc} The homotopy category obtained from the subcategory
of special $\Gamma$-spaces by inverting the objectwise weak equivalences.
\item\label{enmfd} The homotopy category obtained from the category of
small permutative categories by inverting the weak
equivalences.
\end{enumerate}
\end{thm}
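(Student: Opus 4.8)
The plan is to obtain the equivalence of \enref{enmfb}, \enref{enmfc}, and \enref{enmfd} from the theorems above, and to attach \enref{enmfa} through the nerve functor on permutative categories (equivalently, through the $E$ construction of Theorem~\ref{maineinfty}). The equivalence of \enref{enmfc} and \enref{enmfd} is exactly Theorem~\ref{thmuncomp}. For \enref{enmfb}$\simeq$\enref{enmfc} I would play off the functor $E^{\Gamma}$ — which lands in special $\Gamma$-spaces by Theorem~\ref{maingamma} — against the inclusion $\iota$ of special $\Gamma$-spaces into all $\Gamma$-spaces. The one non-formal ingredient is the observation that a map $f$ of \emph{special} $\Gamma$-spaces that is a weak equivalence at level $\fpo$ is an objectwise weak equivalence, since the canonical maps $X(\fpn)\to X(\fpo)^{\times n}$ are weak equivalences natural in $f$ and every finite based set is isomorphic to some $\fpn$.

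Granting that, the steps for \enref{enmfb}$\simeq$\enref{enmfc} are: first, $E^{\Gamma}$ sends pre-stable equivalences to objectwise weak equivalences — if $EX\to EY$ is a weak equivalence then, because $EX\to E^{\Gamma}X(\fpo)$ is a natural weak equivalence, the map $E^{\Gamma}X\to E^{\Gamma}Y$ is a level-$\fpo$ weak equivalence of special $\Gamma$-spaces, hence objectwise; second, $\iota$ sends objectwise weak equivalences to pre-stable equivalences, since $AX(m_{1},\dotsc,m_{r})=X(\fpm_{1})\times\dotsb\times X(\fpm_{r})$ carries objectwise weak equivalences to weak equivalences and homotopy colimits preserve weak equivalences; third, the unit $X\to E^{\Gamma}X$ is a pre-stable equivalence, and it is an objectwise weak equivalence when $X$ is special. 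The latter is Theorem~\ref{maingamma}; for the former, note (from the two-out-of-three square built from the unit) that $E^{\Gamma}$ carries the stable equivalence $X\to E^{\Gamma}X$ to a stable equivalence between special $\Gamma$-spaces, hence to an objectwise weak equivalence, and combine this with the natural weak equivalences $E(-)\to E^{\Gamma}(-)(\fpo)$ to conclude $EX\to E(E^{\Gamma}X)$ is a weak equivalence. With these three facts, $E^{\Gamma}$ and $\iota$ descend to the homotopy categories and the unit makes them mutually inverse.

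For \enref{enmfa} I would use that a small permutative category is precisely an algebra in $\Cat$ over the categorical Barratt--Eccles operad $\underline{\aE}$, whose $k$-th category is the translation category of $\Sigma_{k}$, and that the simplicial Barratt--Eccles operad is $N\underline{\aE}$; applying $N$ objectwise thus gives a functor from \enref{enmfd} to \enref{enmfa} that preserves and reflects weak equivalences. For the inverse, the natural move is to invoke the Quillen equivalence $\simp\dashv\Ex^{2}N$ of~\eqref{eqnatns}: since $\Ex^{2}N$ is lax symmetric monoidal and the oplax comparison maps $\simp(S\times T)\to\simp S\times\simp T$ are weak equivalences, this should be a weak symmetric monoidal Quillen equivalence, and hence induce an equivalence on the homotopy categories of algebras over the $\Sigma$-cofibrant operad $N\underline{\aE}$ — the simplicial-set side giving \enref{enmfa} and the $\Cat$ side giving \enref{enmfd}. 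An alternative that stays nearer to the constructions of Section~\ref{secss} is to send a Barratt--Eccles $E_{\infty}$ space to a special $\Gamma$-space by a two-sided bar construction and then to check that $E$ applied to it recovers the original $E_{\infty}$ space through a zig-zag of weak equivalences of algebras.

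The formal parts are routine once the special/level-$\fpo$ lemma and the identification of permutative categories with $\underline{\aE}$-algebras are in place. I expect the main obstacle to be the comparison of \enref{enmfa} with the others: verifying that $\simp\dashv\Ex^{2}N$ actually satisfies the hypotheses needed to transport $E_{\infty}$-algebra structures and to yield an \emph{equivalence} of homotopy categories of algebras — in particular that $\simp$ preserves finite products up to weak equivalence and meets the requisite unit and comonoidal conditions — or, along the bar-construction route, keeping careful track of the Barratt--Eccles actions through $E$, $E^{\Gamma}$, and the natural weak equivalences of~\eqref{eqnatns} and~\eqref{eqnatcat}.
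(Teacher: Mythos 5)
Your handling of \enref{enmfc}$\simeq$\enref{enmfd} (Theorem~\ref{thmuncomp}) and the overall strategy for \enref{enmfb}$\simeq$\enref{enmfc} match the paper, but there is a genuine gap in the one step you correctly identify as non-formal: showing that the unit $\eta\colon X\to E^{\Gamma}X$ is a pre-stable equivalence. You argue that $E^{\Gamma}\eta$ is a stable equivalence between \emph{special} $\Gamma$-spaces and is ``hence'' an objectwise weak equivalence. That implication is false: it holds for \emph{very special} $\Gamma$-spaces, but a stable equivalence between merely special $\Gamma$-spaces need not be a level equivalence (the map of discrete special $\Gamma$-spaces induced by the monoid map $\bN\to\bZ$ is a stable equivalence but not an equivalence at level $\fpo$). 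The paper supplies the missing input by a different, explicit argument: it introduces the intermediate object $E^{2}X$ and proves (Lemma~\ref{lemtechspace}) that $\alpha\circ\eta=\alpha\circ E\eta=\id$ on $EX$, which yields that $\eta$ and $E^{\Gamma}\eta\colon E^{\Gamma}X\to E^{\Gamma}E^{\Gamma}X$ coincide in the strict homotopy category; since $\eta$ on the special object $E^{\Gamma}X$ is a level equivalence by Theorem~\ref{thmtechgamma}, so is $E^{\Gamma}\eta$, and hence $\eta\colon X\to E^{\Gamma}X$ is a pre-stable equivalence. You need this lemma (or an equivalent monad-style identity); it does not follow from stability considerations.

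For \enref{enmfa} the paper takes a different and much shorter route than your primary proposal: it does not compare \enref{enmfa} with \enref{enmfd} via the nerve at all, but compares \enref{enmfa} directly with \enref{enmfc} by citing \cite[1.8]{MayThomason} together with the argument for \cite[1.1]{KrizMay} --- i.e., the two-sided bar construction comparison between $E_{\infty}$ spaces and special $\Gamma$-spaces, which is essentially your ``alternative'' route. Your primary route --- identifying permutative categories with categorical Barratt--Eccles algebras and transporting algebra structures across $\simp\dashv\Ex^{2}N$ as a weak monoidal Quillen equivalence --- faces serious obstructions: the Thomason model structure on $\Cat$ is not a cartesian monoidal model category, so the standard transfer theorems for algebras over $\Sigma$-cofibrant operads do not apply, and in any case rectifying an arbitrary simplicial Barratt--Eccles algebra to a permutative category is essentially the content of Thomason's theorem, which this paper is in the business of reproving; assuming such a transfer would be circular in spirit. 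I would drop that route and use the May--Thomason comparison of \enref{enmfa} with \enref{enmfc}, as the paper does.
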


The previous theorem provides a homotopy theory for permutative
categories and $\Gamma$-spaces before group completion, which now
allows the construction of ``spectral monoid rings'' associated to
$\Gamma$-spaces.  For a topological monoid $M$, the suspension
spectrum $\Sigma^{\infty}_{+}M$ has the structure of an associative
$S$-algebra ($A_{\infty}$ ring spectrum) with $M$ providing the
multiplicative structure.  The spectral monoid ring is a stable
homotopy theory refinement of the monoid ring $\bZ[\pi_{0}M]$, which
is $\pi_{0}\Sigma^{\infty}_{+}M$ or $\pi_{0}^{S}M$.  For a
$\Gamma$-space $X$, we can use $EX$ in place of $M$ and
$\Sigma^{\infty}_{+}EX$ is an $E_{\infty}$ ring spectrum with the
addition on $EX$ providing the multiplication on
$\Sigma^{\infty}_{+}EX$.  The spectral group ring of the associated
infinite loop space, $\Sigma^{\infty}_{+}\Omega^{\infty} X$, is the
localization of $\Sigma^{\infty}_{+}EX$ with respect the
multiplicative monoid $\pi_{0}EX\subset \pi_{0}^{S}EX$.  Spectral
monoid rings and algebras arise in the construction of twisted
generalized cohomology theories (as explained, for example, in
\cite[2.5]{units} and \cite{twists}), and the localization
$\Sigma^{\infty}_{+}EX\to\Sigma^{\infty}_{+}\Omega^{\infty} X$,
specifically, plays a role in current work in extending notions
of log geometry to derived algebraic geometry and stable homotopy
theory (see the lecture notes by Rognes on log geometry available at
\cite{loglects}).

\subsection*{Acknowledgments}
This paper owes an obvious debt to the author's collaborative work
with A.~D.~Elmendorf \cite{ElmendorfMandell,ElmendorfMandell2}; the
author thanks A.~D.~Elmendorf 
for many useful conversations and remarks.  The author thanks
Andrew J.~Blumberg for all his help. 

\section{Review of $\Gamma$-categories and $\Gamma$-spaces}
\label{secrevgamm}

This section briefly reviews the equivalence between the homotopy theory
of $\Gamma$-spaces and of $\Gamma$-categories.  We begin by introducing the
notation used throughout the paper.

\begin{notn}
We denote by $\fun$ the finite set $\{1,\dotsc,n\}$ and $\fpn$ the
finite based set $\{0,1,\dotsc,n\}$, with zero as base-point.  We
write $\aN$ for the category with objects the finite sets $\fun$ for
$n\geq 0$ (with $\fuz$ the empty set) and morphisms the maps of sets.
We write $\aF$ for the 
category with objects the finite based sets $\fpn$ for $n\geq 0$ and
morphisms the based maps of based sets.
\end{notn}

We typically regard a $\Gamma$-space or $\Gamma$-category as a functor from
$\aF$ to simplicial sets or categories rather than from the whole category
of finite based sets.

\begin{defn}
A \term{$\Gamma$-space} is a functor $X$ from $\aF$ to simplicial sets with
$X(\fpz)=*$.  A map of $\Gamma$-spaces is a natural transformation of
functors from $\aF$ to simplicial sets. A \term{$\Gamma$-category} is a functor $\aX$
from $\aF$ to the category of small categories with $\aX(\fpz)=*$, the
category with the unique object $*$ and the unique morphism $\id_{*}$.
A map of $\Gamma$-categories is a natural transformation of functors
from $\aF$ to small categories.
\end{defn}

We emphasize that $\aX$ must be a strict functor to small categories:
For $\phi \colon \fpm\to \fpn$ and $\psi\colon \fpn\to \fpp$, 
the functors $\aX(\psi \circ \phi)$ and $\aX(\psi)\circ \aX(\phi)$
must be equal (and not just naturally isomorphic).  A map of $\Gamma$-categories $f\colon \aX\to \aY$ consists of a sequence of functors
$f_{n}\colon \aX(\fpn)\to \aY(\fpn)$ such that for every map $\phi \colon
\fpm\to\fpn$ in $\aF$, the diagram
\[
\xymatrix{%
\aX(\fpm)\ar[r]^{f_{m}}\ar[d]_{\aX(\phi)}&\aY(\fpm)\ar[d]^{\aY(\phi)}\\
\aX(\fpn)\ar[r]_{f_{n}}&\aY(\fpn)
}
\]
commutes strictly.  In particular, applying the nerve functor
objectwise to a $\Gamma$-category then produces a $\Gamma$-space.  We use
this in defining the ``strict'' homotopy theory of $\Gamma$-categories.

\begin{defn}
A map $X\to Y$ of $\Gamma$-spaces is a \term{weak equivalence} if each
map $X(\fpn)\to Y(\fpn)$ is a weak equivalence of simplicial sets.  A
map of $\Gamma$-categories $X\to Y$ is a \term{weak equivalence} if the
induced map $N\aX\to N\aY$ is a weak equivalence of $\Gamma$-spaces.
\end{defn}

More important than weak equivalence is the notion of stable
equivalence, which for our purposes is best understood in terms of
very special $\Gamma$-spaces.   A $\Gamma$-space $X$ is \term{special} when
for each $n$ the canonical map 
\[
X(\fpn)\to X(\fpo)\times \dotsb \times X(\fpo)=X(\fpo)^{\times n}
\]
is a weak equivalence.  This canonical map is induced by the
\term{indicator} maps $\fpn\to \fpo$ which send all but one of the
non-zero element of $\fpn$ to $0$.  For a special $\Gamma$-space,
$\pi_{0}X(\fpo)$ is an abelian monoid under the operation
\[
\pi_{0}X(\fpo) \times \pi_{0}X(\fpo) \iso \pi_{0}X(\fpt)
\to \pi_{0}X(\fpo)
\]
induced by the map $\fpt\to\fpo$ sending both non-basepoint elements
of $\fpt$ to the non-basepoint element of $\fpo$.  A special $\Gamma$-space is \term{very special} when the monoid $\pi_{0}X(\fpo)$ is a
group.  

\begin{defn}
A map of $\Gamma$-spaces $f\colon X\to Y$ is a \term{stable equivalence} when for
every very special $\Gamma$-space $Z$, the map $f^{*}\colon [Y,Z]\to
[X,Z]$ is a bijection, where $[-,-]$ denotes maps in the homotopy
category obtained by formally inverting the weak equivalences.  A map
of $\Gamma$-categories $\aX\to \aY$ is a \term{stable equivalence} when
the induced map $N\aX\to N\aY$ of $\Gamma$-spaces is a stable equivalence.
\end{defn}

Equivalently, a map of $\Gamma$-spaces is a stable equivalence if and
only if it induces a weak equivalence of associated spectra
\cite[5.1,5.8]{BousfieldFriedlander}. 

In order to compare the homotopy theory of $\Gamma$-spaces and
$\Gamma$-categories, we use the Fritsch-Latch-Thomason Quillen equivalence of the category of
simplicial sets and the category of small categories
\cite{nonmodel,ThomasonCatModel}.  We call a map in the category of small
categories a weak equivalence if it induces a weak equivalence on
nerves.  The nerve functor has a left adjoint ``categorization
functor'' $c$, which generally does not behave well homotopically.
However, $c\circ \Sd^{2}$ preserves weak equivalences, where
$\Sd^{2}=\Sd\circ \Sd$ is the second subdivision functor
\cite[\S7]{KanCSSC}.  The functor $\Ex^{2}N$ is right adjoint to
$c\Sd^{2}$, and for any simplicial set $X$ and any category
$\aC$, the unit and counit of the adjunction,
\[
X\to \Ex^{2}Nc\Sd^{2}X
\qquad \text{and}\qquad
c\Sd^{2}\Ex^{2}N\aC\to \aC,
\]
are always weak equivalences.  Since the natural map $X\to \Ex^{2}X$
is always a weak equivalence and the diagrams
\[
\xymatrix{%
&\Sd^{2}X\ar[r]\ar[d]_{\sim}&Nc\Sd^{2}X\ar[d]^{\sim}
&c\Sd^{2}N\aC\ar[d]^{\sim}\\
X\ar[r]^(.35){\sim}\ar@/_2em/[rr]_{\sim}&\Ex^{2}\Sd^{2}X\ar[r]&\Ex^{2}Nc\Sd^{2}X
&c\Sd^{2}\Ex^{2}N\aC\ar[r]^(.7){\sim}&\aC
}
\vrule height0pt width0pt depth2em
\]
commute, we have natural weak equivalences
\begin{equation}\label{eqnc}
Nc\Sd^{2}X \from \Sd^{2}X\to X
\qquad \text{and}\qquad
c\Sd^{2}N\aC\to \aC.
\end{equation}
The functor $\Sd^{2}$ takes the one-point simplicial set $*$ to an
isomorphic simplicial set; replacing $\Sd^{2}$ by an isomorphic
functor if necessary, we can arrange that $\Sd^{2}*=*$.

\begin{defn}
Let $\simp$ be the functor from $\Gamma$-spaces to $\Gamma$-categories
obtained by applying $c\Sd^{2}$ objectwise.
\end{defn}

We then obtain the natural weak equivalences of $\Gamma$-spaces and
$\Gamma$-categories~\eqref{eqnatns} from~\eqref{eqnc}. Inverting weak 
equivalences or stable equivalences, we get equivalences of
homotopy categories.

\begin{prop}\label{propNsimp}
The functors $N$ and $\simp$ induce inverse equivalences between the
homotopy categories of $\Gamma$-spaces and $\Gamma$-categories
obtained by inverting the weak equivalences.
\end{prop}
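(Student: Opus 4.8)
The plan is to deduce the proposition formally from the natural weak equivalences~\eqref{eqnatns}, which are already available, using only that both $N$ and $\simp$ preserve weak equivalences and that weak equivalences of $\Gamma$-spaces and of $\Gamma$-categories are detected objectwise.

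First I would check that $N$ and $\simp$ descend to the localized homotopy categories. That $N$ preserves weak equivalences is immediate from the definition of weak equivalence of $\Gamma$-categories. For $\simp=c\Sd^{2}$ applied objectwise, recall that $c\Sd^{2}$ preserves weak equivalences of simplicial sets \cite[\S7]{KanCSSC}; since a map $X\to Y$ of $\Gamma$-spaces is a weak equivalence iff each $X(\fpn)\to Y(\fpn)$ is, $\simp$ carries weak equivalences of $\Gamma$-spaces to objectwise, hence honest, weak equivalences of $\Gamma$-categories. By the universal property of localization, $N$ and $\simp$ therefore induce functors on the homotopy categories obtained by inverting the weak equivalences.

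Next I would promote the natural weak equivalences~\eqref{eqnatns} to natural isomorphisms. The zigzag $N\simp X\from\Sd^{2}X\to X$ is a zigzag of weak equivalences of $\Gamma$-spaces natural in $X$, so it exhibits a natural isomorphism $N\circ\simp\iso\Id$ in the homotopy category of $\Gamma$-spaces; likewise $\simp N\aX\to\aX$ is a natural weak equivalence of $\Gamma$-categories and exhibits a natural isomorphism $\simp\circ N\iso\Id$ in the homotopy category of $\Gamma$-categories. Here the two points that $\Sd^{2}X$ is again a $\Gamma$-space and that the maps in~\eqref{eqnatns} really are maps of $\Gamma$-objects have already been recorded above, using the normalization $\Sd^{2}*=*$ together with the naturality of $\Sd^{2}X\to X$ and $\Sd^{2}X\to Nc\Sd^{2}X$ in the simplicial-set variable. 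From the natural isomorphisms $N\simp\iso\Id$ and $\simp N\iso\Id$ we conclude that $N$ and $\simp$ are inverse equivalences.

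I expect no genuine obstacle here: once~\eqref{eqnatns} is in hand, the only content is the observation that weak equivalences in both homotopy theories are objectwise, which lets the objectwise Fritsch--Latch--Thomason equivalence~\eqref{eqnc} be assembled verbatim into the statement about $\Gamma$-objects. (One could alternatively note that $c\Sd^{2}\dashv\Ex^{2}N$ applied objectwise is a Quillen equivalence of suitable objectwise model structures and read the proposition off from that, but this is more structure than the statement requires.)
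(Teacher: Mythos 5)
Your argument is correct and is essentially the paper's own (the paper proves this proposition simply by observing that the natural zigzags of weak equivalences in~\eqref{eqnatns}, obtained from~\eqref{eqnc} applied objectwise, descend to natural isomorphisms $N\simp\iso\Id$ and $\simp N\iso\Id$ after inverting the weak equivalences). Your write-up just makes explicit the two routine checks — that both functors preserve weak equivalences and that weak equivalences on both sides are detected objectwise — which the paper leaves implicit.
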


\begin{prop}\label{propNsimpS}
The functors $N$ and $\simp$ induce inverse equivalences between the
homotopy categories of $\Gamma$-spaces and $\Gamma$-categories
obtained by inverting the stable equivalences.
\end{prop}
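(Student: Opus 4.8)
The plan is to deduce this from Proposition~\ref{propNsimp} by checking that both $N$ and $\simp$ descend to functors on the stable homotopy categories and that the natural transformations in $\eqref{eqnatns}$ become natural isomorphisms there. That $N$ carries stable equivalences of $\Gamma$-categories to stable equivalences of $\Gamma$-spaces is immediate from the definition of the former. So the only real content is to verify that $\simp$ preserves stable equivalences.

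To that end, let $f\colon X\to Y$ be a stable equivalence of $\Gamma$-spaces. By naturality of the transformations in $\eqref{eqnatns}$ (which come from $\eqref{eqnc}$ applied objectwise) we get a commuting diagram of $\Gamma$-spaces
\[
\xymatrix{
N\simp X\ar[d] & \Sd^{2}X\ar[l]\ar[r]\ar[d] & X\ar[d]^{f}\\
N\simp Y & \Sd^{2}Y\ar[l]\ar[r] & Y
}
\]
in which the horizontal maps are objectwise weak equivalences, hence stable equivalences. Applying the two-out-of-three property for stable equivalences twice — first to the right-hand square to conclude $\Sd^{2}X\to \Sd^{2}Y$ is a stable equivalence, then to the left-hand square — shows $N\simp X\to N\simp Y$ is a stable equivalence, that is, $\simp f$ is a stable equivalence of $\Gamma$-categories.

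With both functors now inducing functors on the homotopy categories obtained by inverting stable equivalences, it remains to note that the natural transformations in $\eqref{eqnatns}$ are themselves stable equivalences: each is an objectwise weak equivalence (for $\simp N\aX\to\aX$ this means $N$ of it is an objectwise weak equivalence of $\Gamma$-spaces), and objectwise weak equivalences are stable equivalences since they are already isomorphisms in the homotopy category obtained by inverting weak equivalences. Hence in the stable homotopy categories the displayed zigzag furnishes a natural isomorphism $N\simp X\iso X$ and the map $\simp N\aX\to\aX$ furnishes a natural isomorphism $\simp N\aX\iso\aX$, so $N$ and $\simp$ are inverse equivalences, exactly as in Proposition~\ref{propNsimp}. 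The argument is entirely formal; the one step requiring any care is the two-out-of-three verification that $\simp$ preserves stable equivalences, and I do not anticipate a genuine obstacle there.
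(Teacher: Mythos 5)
Your argument is correct and is exactly the formal argument the paper intends (the paper offers no explicit proof, simply asserting the result once the natural transformations in \eqref{eqnatns} are known to be weak equivalences, hence stable equivalences). Your two-out-of-three verification that $\simp$ preserves stable equivalences is the right way to fill in the one detail left implicit.
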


Since both the weak equivalences and stable equivalences of $\Gamma$-spaces provide the weak equivalences in model structures (see, for
example, \cite{BousfieldFriedlander}), the homotopy categories in the
previous propositions are isomorphic to categories with small hom sets.

\section{Review of the $K$-theory functor}
\label{secrevk}

This section reviews Segal's $K$-theory functor from symmetric
monoidal categories to $\Gamma$-spaces and some variants of this functor.
All the material in this section is well-known to experts, and most
can be found in
\cite{ElmendorfMandell,MayPerm,MaySpecPerm,ThomasonSymMon}.  We
include it here to refer to specific details, for completeness, and to
make this paper more self-contained.

For a small symmetric monoidal category $\aC$, we typically
denote the symmetric monoidal product as $\boxbin$ and the unit as
$u$.  We construct a $\Gamma$-category $\aK\aC$ as follows.

\begin{cons}\label{consK}
Let $\aK\aC(\fpz)=*$ the category with a unique object $*$ and the
identity map.  For $\fpn$ in $\aF$ with $n>0$, we define the category
$\aK\aC(\fpn)$ to have as objects the collections $(x_{I},f_{I,J})$
where 
\begin{itemize}
\item For each subset $I$ of $\fun=\{1,\dotsc,n\}$,
$x_{I}$ is an object of $\aC$, and 
\item For each pair of disjoint subsets $I,J$ of $\fun$, 
\[
f_{I,J}\colon x_{I\cup J}\to x_{I}\boxbin x_{J}
\]
is a map in $\aC$
\end{itemize}
such that
\begin{itemize}
\item When $I$ is the empty set $\emptyset$, $x_{I}=u$ and $f_{I,J}$
is the inverse of the unit isomorphism.
\item $f_{I,J}=\gamma \circ f_{J,I}$ where $\gamma$ is the symmetry
isomorphism $x_{J}\boxbin x_{I}\iso x_{I}\boxbin x_{J}$.
\item Whenever $I_{1}$, $I_{2}$, and $I_{3}$ are mutually disjoint,
the diagram
\[
\xymatrix@C+3pc{%
x_{I_{1}\cup I_{2}\cup I_{3}}\ar[r]^{f_{I_{1},I_{2}\cup I_{3}}}
\ar[d]_{f_{I_{1}\cup I_{2},I_{3}}}
&x_{I_{1}}\boxbin x_{I_{2}\cup I_{3}}\ar[d]^{\id\boxbin f_{I_{2},I_{3}}}\\
x_{I_{1}\cup I_{2}}\boxbin x_{I_{3}}\ar[r]_{f_{I_{1},I_{2}}\boxbin\id}
&x_{I_{1}}\boxbin x_{I_{2}}\boxbin x_{I_{3}}
}
\]
commutes (where we have omitted notation for the associativity isomorphism in $\aC$).
We write $f_{I_{1},I_{2},I_{3}}$ for the common
composite into a fixed association.
\end{itemize}
A morphism $g$ in $\aK\aC(\fpn)$ from $(x_{I},f_{I,J})$ to
$(x'_{I},f'_{I,J})$ consists of maps $h_{I}\colon x_{I}\to x'_{I}$ in
$\aC$ for all $I$
such that $h_{\emptyset}$ is the identity and the diagram
\[
\xymatrix{%
x_{I\cup J}\ar[r]^{h_{I\cup J}}\ar[d]_{f_{I,J}}&x'_{I\cup J}\ar[d]^{f'_{I,J}}\\
x_{I}\boxbin x_{J}\ar[r]_{h_{I}\boxbin h_{J}}&x'_{I}\boxbin x'_{J}
}
\]
commutes for all disjoint $I$, $J$.
\end{cons}

The categories $\aK\aC(\fpn)$ assemble into a $\Gamma$-category as
follows.   For
$\phi \colon \fpm\to \fpn$ in $\aF$ and $X=(x_{I},f_{I,J})$ in
$\aK\aC(\fpm)$,
define $\phi_{*}X=Y=(y_{I},g_{I,J})$ where
\[
y_{I} = x_{\phi^{-1}(I)}
\qquad \text{and}\qquad 
g_{I,J}=f_{\phi^{-1}(I),\phi^{-1}(J)}
\]
(replacing $*$ with $u$ or vice-versa if $\fpm$ or $\fpn$ is $\fpz$),
and likewise on maps.   We obtain a $\Gamma$-space by applying the nerve
functor to each category $\aK\aC(\fpn)$.

\begin{defn}
For a symmetric monoidal category $\aC$, $K\aC$ is the $\Gamma$-space
$K\aC(\fpn)=N\aK\aC(\fpn)$. 
\end{defn}

In terms of functoriality, $\aK$ is obviously functorial in
\term{strict} maps of symmetric monoidal categories, i.e., functors
$F\colon \aC\to \aD$ that strictly preserve the product $\boxbin$,
the unit object and isomorphism, and the associativity and symmetry
isomorphisms.  In fact, $\aK$ extends to a functor on the
\term{strictly unital op-lax} maps: An \term{op-lax} map of
symmetric monoidal categories consists of a functor $F\colon \aC\to
\aD$, a natural transformation
\[
\lambda \colon F(x\boxbin y)\to F(x)\boxbin F(y),
\]
and a natural transformation $\epsilon \colon F(u)\to u$
such that the following unit, symmetry, and
associativity diagrams commute,
\begin{gather*}
\xymatrix{%
F(u\boxbin y)\ar[d]_{F(\eta)}\ar[r]^{\lambda}
&F(u)\boxbin F(y)\ar[d]^{\epsilon}
&F(x\boxbin y)\ar[r]^{F(\gamma)}\ar[d]_{\lambda}
&F(y\boxbin x)\ar[d]^{\lambda}\\
F(y)
&u\boxbin F(y)\ar[l]^{\eta}
&F(x)\boxbin F(y)\ar[r]_{\gamma}
&F(y)\boxbin F(x)
}\\
\xymatrix@C-2pc{%
&F(x\boxbin y)\boxbin F(z)\ar[dr]^{\lambda}\\
F((x\boxbin y)\boxbin z)\ar[d]_{F(\alpha)}\ar[ur]^{\lambda}
&&(F(x)\boxbin F(y))\boxbin F(z)\ar[d]^{\alpha}\\
F(x\boxbin (y\boxbin z))\ar[dr]_{\lambda}
&&F(x)\boxbin (F(y)\boxbin F(z))\\
&F(x)\boxbin F(y\boxbin z)\ar[ur]_{\lambda}\\
}
\end{gather*}
where $\eta$, $\gamma$, and $\alpha$ denote the unit,
symmetry and associativity isomorphisms, respectively. An op-lax map is
\term{strictly unital} when the unit map $\epsilon$ is the identity
(i.e., $F$ strictly preserves the unit object). A strictly unital
op-lax map 
$\aC\to \aD$ induces a map of $\Gamma$-categories $\aK\aC\to \aK\aD$
sending $(x_{I},f_{I,J})$ in $\aK\aC(\fpn)$ to $(F(x_{I}),\lambda
\circ F(f_{I,J}))$ in $\aK\aD(\fpn)$, and likewise for morphisms.

We will need the following additional structure in Section~\ref{secpf}.
Recall 
that an op-lax natural transformation $\nu \colon F\to G$ between
op-lax maps is a natural transformation such that the following
diagrams commute.
\[
\xymatrix{%
F(x\boxbin y)\ar[r]^{\nu}\ar[d]_{\lambda}
&G(x\boxbin y)\ar[d]^{\lambda}
&F(u)\ar[rr]^{\nu}\ar[dr]_{\epsilon}
&&G(u)\ar[dl]^{\epsilon}\\
F(x)\boxbin F(y)\ar[r]_{\nu \boxbin \nu}
&G(x)\boxbin G(y)
&&u
}
\]
An op-lax natural transformation between strictly unital op-lax maps
induces a natural transformation between the induced maps of
$\Gamma$-categories, compatible with the $\Gamma$-structure. We
summarize the 
discussion of the previous paragraphs in the following proposition.

\begin{prop}
$\aK$ and $K$ are functors from the category of small symmetric
monoidal categories and strictly unital op-lax maps to the category
of $\Gamma$-categories and the category of $\Gamma$-spaces, respectively.
An op-lax natural transformation induces a natural
transformation on $\aK$ and a homotopy on $K$ between the induced
maps. 
\end{prop}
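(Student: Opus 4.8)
The plan is to verify the three assertions by direct inspection, using only the coherence already recorded for op-lax maps and op-lax natural transformations; no deeper input is needed, and the one subtlety is fixing composition conventions so that functoriality is strict.

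First I would fix a strictly unital op-lax map $(F,\lambda)\colon\aC\to\aD$ and define $\aK F(\fpn)\colon\aK\aC(\fpn)\to\aK\aD(\fpn)$ by sending an object $(x_{I},f_{I,J})$ to $(F(x_{I}),\,\lambda\circ F(f_{I,J}))$ and a morphism $(h_{I})$ to $(F(h_{I}))$. The point requiring verification is that the triple $(F(x_{I}),\lambda\circ F(f_{I,J}))$ satisfies the three conditions of Construction~\ref{consK}: the condition at $I=\emptyset$ follows from strict unitality ($F(x_{\emptyset})=F(u)=u$) together with the op-lax unit diagram; the identity $\lambda\circ F(f_{I,J})=\gamma\circ\lambda\circ F(f_{J,I})$ follows by applying $F$ to $f_{I,J}=\gamma\circ f_{J,I}$ and pasting on the op-lax symmetry diagram; and the cocycle square for the image collection is the $F$-image of the cocycle square for $(x_{I},f_{I,J})$ pasted with the op-lax associativity diagram (with $I_{1},I_{2},I_{3}$ in the roles of $x,y,z$). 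That $(F(h_{I}))$ is then a morphism of $\aK\aD(\fpn)$ and that $\aK F(\fpn)$ is a functor are immediate from functoriality of $F$ and naturality of $\lambda$. Compatibility with the $\Gamma$-structure is formal, since for $\phi\colon\fpm\to\fpn$ the operator $\phi_{*}$ acts on both $\aK\aC$ and $\aK\aD$ by reindexing objects and maps along $\phi^{-1}$, which commutes with applying $F$ and $\lambda$ coordinatewise; and functoriality of $\aK$ in $F$ is checked against the standard composite op-lax structure $\mu\circ G(\lambda)$ on a composite $GF$, using $G(\lambda\circ F(f_{I,J}))=G(\lambda)\circ GF(f_{I,J})$, together with the observation that strict unitality is preserved by composition. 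Applying the nerve objectwise---which is natural in the category variable, hence compatible with the $\Gamma$-structure---then shows $K=N\aK$ is a functor to $\Gamma$-spaces.

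Finally, for an op-lax natural transformation $\nu\colon F\to G$ between strictly unital op-lax maps, I would define $\aK\nu\colon\aK F\to\aK G$ with component at $\fpn$ sending $(x_{I},f_{I,J})$ to the family $(\nu_{x_{I}})_{I}$, and check that this family is a morphism $\aK F(\fpn)(x_{I},f_{I,J})\to\aK G(\fpn)(x_{I},f_{I,J})$ in $\aK\aD(\fpn)$: the requirement at $I=\emptyset$ holds because the unit triangle for $\nu$ with $\epsilon^{F}=\epsilon^{G}=\id$ forces $\nu_{u}=\id_{u}$, and the morphism square of Construction~\ref{consK}, namely $\lambda^{G}\circ G(f_{I,J})\circ\nu_{x_{I\cup J}}=(\nu_{x_{I}}\boxbin\nu_{x_{J}})\circ\lambda^{F}\circ F(f_{I,J})$, follows by composing naturality of $\nu$ at $f_{I,J}$ with the op-lax-natural-transformation square $\lambda^{G}\circ\nu_{x\boxbin y}=(\nu_{x}\boxbin\nu_{y})\circ\lambda^{F}$. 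Naturality of $\aK\nu$ in $(x_{I},f_{I,J})$ and compatibility with the $\Gamma$-structure are again just naturality of $\nu$ and reindexing. Since a natural transformation of functors $\aC\to\aD$ is the same as a functor $\aC\times[1]\to\aD$, applying $N$ objectwise to $\aK\nu$ yields levelwise a homotopy $N\aC\times\Delta^{1}\to N\aD$, i.e.\ a homotopy $KF\htp KG$ of maps of $\Gamma$-spaces, compatibly with the $\Gamma$-structure.

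I do not expect a genuine obstacle: the whole statement is the translation of the op-lax unit, symmetry, and associativity axioms into the three defining conditions of $\aK\aC(\fpn)$, and of the two op-lax-natural-transformation axioms into the morphism condition of Construction~\ref{consK}. The only places to be careful are fixing the composition convention for op-lax maps so that $\aK$ is strictly (not merely pseudo-) functorial, verifying that strict unitality is closed under composition, and keeping the bookkeeping straight in the cocycle verification, where the op-lax associativity hexagon must be pasted onto the $F$-image of the original cocycle square.
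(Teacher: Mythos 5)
Your proposal is correct and follows essentially the same route as the paper, which simply records the induced map $(x_{I},f_{I,J})\mapsto(F(x_{I}),\lambda\circ F(f_{I,J}))$ and the induced natural transformation and summarizes the preceding discussion as the proof; you have merely written out the routine verifications (the three object conditions, the morphism condition, $\Gamma$-compatibility, composition, and the nerve-of-a-natural-transformation homotopy) that the paper leaves implicit. No gaps.
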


In particular, by restricting to the subcategory consisting of the
permutative categories and the strict maps, we get the functors $\aK$
and $K$ in the statements of the theorems in the introduction.  These
functors admit several variants, which extend to different variants of
the category of symmetric monoidal categories.

\begin{var}\label{varSegal}
In the construction of $\aK$, we can require the maps $f_{I,J}$ to be
isomorphisms.  This is Segal's original $K$-theory functor as
described for example in \cite{MaySpecPerm}.  The natural domain of
this functor is the category of small symmetric monoidal categories
and strictly unital strong maps; these are the strictly unital op-lax
maps where $\lambda$ is an isomorphism.
\end{var}

\begin{var}\label{varlax}
In the construction of $\aK$, we can require the maps $f_{I,J}$ to go
the other direction, i.e.,
\[
f_{I,J}\colon x_{I}\boxbin x_{J} \to x_{I\cup J}.
\]
This is the functor called Segal $K$-theory in
\cite{ElmendorfMandell}; its natural domain is the category of small
symmetric monoidal categories 
and strictly unital lax maps.  A lax map $\aC\to \aD$ consists of a
functor and natural transformations
\[
\lambda \colon F(x)\boxbin F(y)\to F(x\boxbin y)
\qquad \text{and}\qquad
\epsilon \colon u\to F(u)
\]
making the evident unit, symmetry, and
associativity diagrams commute.  Put another way, $(F,\lambda,\epsilon)$
defines an op-lax map $\aC\to \aD$ if and only if
$(F^{\op},\lambda,\epsilon)$ defines a lax map $\aC^{\op}\to
\aD^{\op}$. 
\end{var}

We also have variants which loosen the unit condition, but the
constructions occur most naturally by way of 
functors between different categories of small symmetric monoidal
categories.  The inclusion of the category with strictly unital op-lax
maps into the category with op-lax maps has a left
adjoint $U$. Concretely, $U\aC$ has objects the objects of $\aC$ plus
a new disjoint object $v$.  Morphisms in
$U\aC$ between objects of $\aC$ are just the 
morphisms in $\aC$, and morphisms to and from $v$ are defined by
\[
U\aC(x,v)=\aC(x,u),\qquad U\aC(v,x)=\emptyset, \qquad U\aC(v,v)=\{\id_{v}\},
\]
for $x$ an object of $\aC$ and $u$ the unit in $\aC$. We obtain a
symmetric monoidal product on $U\aC$ from the symmetric monoidal
product on $\aC$ with $v$ chosen to be a strict unit (i.e., $v\boxbin
x=x$ for all $x$ in $U\aC$); the inclusion of $\aC$ in $U\aC$ is then
op-lax monoidal and the functor $U\aC\to \aC$ sending $v$ to $u$ is
strictly unital op-lax (in fact, strict). Then $\aK\circ U$ defines a
functor from the category of small symmetric monoidal categories and
op-lax maps to $\Gamma$-categories.

To compare these variants and to understand $K$, we
construct weak equivalences
\begin{equation}\label{eqKspec}
p_{n}\colon \aK\aC(\fpn)\to \aC\times \dotsb \times \aC=\aC^{\times n}.
\end{equation}
Define $p_{n}$ to be the functor sends the object $(x_{I},f_{I,J})$ of
$\aK\aC(n)$ to the object
$(x_{\{1\}},\dotsc,x_{\{n\}})$ of $\aC^{\times n}$, and likewise for
maps.  This functor 
has a right adjoint $q_{n}$ that sends $(y_{1},\dotsc,y_{n})$ to the
system $(x_{I},f_{I,J})$ with 
\[
x_{\{i_{1},\dotsc,i_{r}\}}=
(\dotsb (y_{i_{1}}\boxbin y_{i_{2}})\boxbin \dotsb)\boxbin y_{i_{r}}
\]
for $i_{1}<\dotsb<i_{r}$ and the maps $f_{I,J}$ induced by the
associativity and symmetry isomorphisms.  In the case $n=1$, these
are inverse isomorphisms of categories, and under these isomorphisms,
\eqref{eqKspec} is induced by the indicator maps $\fpn\to \fpo$.
Because an adjunction induces inverse homotopy equivalences on nerves,
this proves the following proposition.

\begin{prop}\label{propKspec}
For any small symmetric monoidal category $\aC$, $K\aC$ is a special
$\Gamma$-space with $K\aC(\fpo)$ isomorphic to $N\aC$.
\end{prop}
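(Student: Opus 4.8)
Essentially all the ingredients needed are already assembled in the discussion preceding the statement, so the plan is largely to record how they fit together. Since $K\aC(\fpn)=N\aK\aC(\fpn)$, I would first treat the case $n=1$: the only subsets of $\{1\}$ are $\emptyset$ and $\{1\}$, the data $x_{\emptyset}=u$ and $f_{\emptyset,\{1\}}$ are forced, and a morphism is determined by an unrestricted map $h_{\{1\}}$, so $(x_{I},f_{I,J})\mapsto x_{\{1\}}$ is an isomorphism of categories $\aK\aC(\fpo)\cong\aC$ (with inverse $q_{1}$), whence $K\aC(\fpo)=N\aK\aC(\fpo)\cong N\aC$. Next I would identify the canonical map $\aK\aC(\fpn)\to\aK\aC(\fpo)^{\times n}$ of the specialness condition, which is induced by the $n$ indicator maps $\delta_{i}\colon\fpn\to\fpo$: since $(\delta_{i})_{*}(x_{I},f_{I,J})$ has $\{1\}$-component $x_{\delta_{i}^{-1}(\{1\})}=x_{\{i\}}$, this map is, under the $n=1$ isomorphism, exactly the functor $p_{n}\colon(x_{I},f_{I,J})\mapsto(x_{\{1\}},\dotsc,x_{\{n\}})$ of~\eqref{eqKspec}.

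The remaining point is that $p_{n}$ is a weak equivalence on nerves, for which I would invoke the general fact that an adjoint pair induces homotopy-inverse maps of nerves: a natural transformation of functors $\aC\to\aD$ is a functor out of $\aC$ times the arrow category $\{0\to1\}$, and its nerve is a simplicial homotopy $N\aC\times\Delta^{1}\to N\aD$, so the unit and counit of the adjunction $p_{n}\dashv q_{n}$ exhibit $Np_{n}$ and $Nq_{n}$ as mutually homotopy inverse. Since the nerve functor preserves finite products, $N(\aC^{\times n})=(N\aC)^{\times n}=K\aC(\fpo)^{\times n}$, and combining this with the previous paragraph gives that $K\aC$ is special with $K\aC(\fpo)\cong N\aC$.

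The one step that the preceding discussion asserts without proof --- and the part I expect to need actual work --- is that $q_{n}$ is a well-defined functor into $\aK\aC(\fpn)$ and is right adjoint to $p_{n}$. Here $q_{n}(y_{1},\dotsc,y_{n})$ is the system $(x_{I},f_{I,J})$ with $x_{\{i_{1},\dotsc,i_{r}\}}=(\dotsb(y_{i_{1}}\boxbin y_{i_{2}})\boxbin\dotsb)\boxbin y_{i_{r}}$ for $i_{1}<\dotsb<i_{r}$ and the $f_{I,J}$ the evident rearranging and re-associating composites of symmetry and associativity isomorphisms. Mac Lane's coherence theorem is what guarantees both that these composites are well-defined and natural in the $y_{i}$, so that $q_{n}$ is functorial, and that they satisfy the empty-set, symmetry, and associativity conditions of Construction~\ref{consK}, so that $q_{n}(y_{1},\dotsc,y_{n})$ genuinely lies in $\aK\aC(\fpn)$. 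Granting that, one has $p_{n}q_{n}=\id_{\aC^{\times n}}$ on the nose so the counit is the identity, and the unit $\eta_{(x_{I},f_{I,J})}$ is the morphism whose $I$-component is the iterated structure map $x_{I}\to(\dotsb(x_{\{i_{1}\}}\boxbin x_{\{i_{2}\}})\boxbin\dotsb)\boxbin x_{\{i_{r}\}}$ into the fixed left-associated product; its naturality in $(x_{I},f_{I,J})$ and the two triangle identities are then diagram chases using precisely the morphism condition and the associativity/unit conditions of Construction~\ref{consK}. I do not anticipate any conceptual obstacle here --- it is coherence bookkeeping --- but it is the only genuine content beyond recording the facts already in hand.
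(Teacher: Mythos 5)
Your argument is correct and is essentially the paper's own proof: the paper likewise establishes the isomorphism $\aK\aC(\fpo)\cong\aC$ via $p_{1}$ and $q_{1}$, identifies the specialness map with $p_{n}$ under that isomorphism, and concludes via the fact that the adjunction $p_{n}\dashv q_{n}$ induces inverse homotopy equivalences on nerves. The coherence bookkeeping you flag for $q_{n}$ is exactly what the paper leaves implicit, and your account of it is accurate.
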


Similar observations apply to the variant functors above.  We have
natural transformations relating the strictly unital strong
construction to both the strictly unital lax and 
op-lax constructions.  It follows that the $K$-theory
$\Gamma$-spaces obtained are naturally weakly equivalent.  Likewise, the
constructions with the weakened units map to the constructions with
strict units.  Since the map $U\aC\to \aC$ induces a homotopy
equivalence on nerves, these natural transformations induce natural
weak equivalences of $\Gamma$-spaces.

Recall that we say that a functor between small categories is a
\term{weak equivalence} when it induces a weak equivalence on nerves.
As a consequence of the previous proposition, the $K$-theory functor
preserves weak equivalences.  As in the introduction, we say that a
strictly unital op-lax map of symmetric monoidal categories is a
\term{stable equivalence} if it induces a stable equivalence on
$K$-theory $\Gamma$-spaces, or equivalently, if it induces a weak
equivalence on the group completion of the nerves.  Quillen's
homological criterion to identify the group completion
\cite{SegalMcDuff} then applies to give an intrinsic characterization
of the stable equivalences.

\begin{prop}\label{propQuil}
A map of symmetric monoidal categories $\aC\to \aD$ is a stable
equivalence if and only if it induces an isomorphism of localized
homology rings
\[
H_{*}\aC[(\pi_{0}\aC)^{-1}]\to 
H_{*}\aD[(\pi_{0}\aD)^{-1}]
\]
obtained by inverting the multiplicative monoids $\pi_{0}\aC\subset
H_{0}\aC$ and $\pi_{0}\aD\subset H_{0}\aD$. 
\end{prop}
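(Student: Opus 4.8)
The plan is to derive this from the homological group-completion theorem of Quillen and McDuff--Segal, together with the reformulation of stable equivalence recorded just before the statement. By that reformulation, $f\colon\aC\to\aD$ is a stable equivalence if and only if it induces a weak equivalence on the group completions of the nerves, i.e.\ if and only if the map $\Omega BN\aC\to\Omega BN\aD$ induced by $Nf$ (equivalently, by $Kf(\fpo)$ under the isomorphism of Proposition~\ref{propKspec}) is a weak equivalence. So the whole problem is to match this weak-equivalence condition with the stated isomorphism of localized homology rings.

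First I would note that these group completions are group-like homotopy-commutative H-spaces --- in fact infinite loop spaces --- so each path component is a simple space and all components of a given one are homotopy equivalent. Consequently a map between two such spaces is a weak equivalence if and only if it induces an isomorphism on integral homology (the homology Whitehead theorem for simple, or nilpotent, spaces). Thus $f$ is a stable equivalence if and only if $Nf$ induces an isomorphism $H_{*}(\Omega BN\aC)\to H_{*}(\Omega BN\aD)$ of Pontryagin rings.

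It remains to identify $H_{*}(\Omega BN\aC)$. Replacing $\aC$ by an equivalent permutative category if necessary (as in \cite{MayPerm}), we may assume $N\aC$ is a strict simplicial monoid, and it is homotopy-commutative since $\boxbin$ is symmetric; hence the image of $\pi_{0}\aC$ in $H_{0}\aC=\bZ[\pi_{0}\aC]$ is a central multiplicative subset of the graded-commutative ring $H_{*}\aC=H_{*}N\aC$, and the localization $H_{*}\aC[(\pi_{0}\aC)^{-1}]$ is defined. Quillen's homological group-completion theorem \cite{SegalMcDuff} then supplies a ring isomorphism $H_{*}(\Omega BN\aC)\iso H_{*}\aC[(\pi_{0}\aC)^{-1}]$, natural in $\aC$, and likewise for $\aD$. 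Combining this naturality square with the first two paragraphs, $Nf$ induces an isomorphism on $H_{*}(\Omega B(-))$ if and only if it induces an isomorphism $H_{*}\aC[(\pi_{0}\aC)^{-1}]\to H_{*}\aD[(\pi_{0}\aD)^{-1}]$, which proves the proposition.

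The only real content --- and the step most in need of care --- is the invocation of the group-completion theorem: one must check its hypotheses (centrality of $\pi_{0}$, which is where homotopy-commutativity enters) and verify that the identification $H_{*}(\Omega BN\aC)\iso H_{*}\aC[(\pi_{0}\aC)^{-1}]$ together with the strictification from symmetric monoidal to permutative categories can be made natural in $\aC$, so that the naturality square above genuinely commutes. Everything else is formal, which is why the result is quoted as folklore.
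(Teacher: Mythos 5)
Your proposal is correct and follows exactly the route the paper intends: the paper offers no proof beyond the sentence preceding the proposition, which reformulates stable equivalence as a weak equivalence of group completions of nerves and then invokes Quillen's homological group-completion criterion from \cite{SegalMcDuff}, precisely the two steps you carry out (with the homology Whitehead theorem for group-like H-spaces filling in the remaining detail).
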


Although not needed in what follows, for completeness of
exposition, we offer the following well-known 
observation on the homotopy theory of the various categories of
symmetric monoidal categories.  Recall that we say that a functor
between small categories is a \term{weak equivalence} when it induces
a weak equivalence on nerves.  The following theorem can be proved
using the methods of \cite[4.2]{MayPerm} and
\cite[4.2]{ElmendorfMandell2}.

\begin{thm}\label{thmsymmoncomp}
The following homotopy categories are equivalent:
\begin{enumerate}
\item The homotopy category obtained from the category of small
permutative categories and strict maps by inverting the weak equivalences.
\item The homotopy category obtained from the category of small
symmetric monoidal categories and strict maps by inverting the weak equivalences.
\item The homotopy category obtained from the category of small
symmetric monoidal categories and strictly unital strong maps by
inverting the weak equivalences. 
\item The homotopy category obtained from the category of small
symmetric monoidal categories and strictly unital op-lax maps by
inverting the weak equivalences. 
\item The homotopy category obtained from the category of small
symmetric monoidal categories and strictly unital lax maps by
inverting the weak equivalences. 
\end{enumerate}
\end{thm}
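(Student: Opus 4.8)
The strategy is to exhibit, for each of the inclusions of categories (i)$\hookrightarrow$(ii)$\hookrightarrow$(iv) and (ii)$\hookrightarrow$(iii)$\hookrightarrow$(iv) and (ii)$\hookrightarrow$(v), a homotopical inverse after inverting weak equivalences, and then observe that the composites in each direction agree with the identity up to natural weak equivalence. The cleanest organizing principle is to show that each inclusion is a \emph{homotopy equivalence of categories with weak equivalences} in the sense that it induces an equivalence of homotopy categories, and that this can be checked by producing a functor the other way together with natural transformations (not necessarily isomorphisms, but pointwise weak equivalences) to and from the relevant identity functors. For the inclusion (i)$\hookrightarrow$(ii) of permutative categories into symmetric monoidal categories, the inverse is the standard strictification functor (e.g.\ the one built from the free permutative category monad, as in \cite[4.2]{MayPerm}); for (ii)$\hookrightarrow$(iii)$\hookrightarrow$(iv), the point is that a symmetric monoidal category equipped with only strict maps, strictly unital strong maps, or strictly unital op-lax maps has ``the same'' weak homotopy type, and one uses that the $K$-theory $\Gamma$-space (or equivalently, via Proposition~\ref{propKspec}, the nerve together with its monoidal structure) is insensitive to which flavor of map one allows.

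Concretely, the key steps I would carry out are as follows. First, recall from the discussion preceding Proposition~\ref{propQuil} that the natural comparison transformations relating the strictly unital strong construction of $\aK$ to the strictly unital lax and op-lax constructions are objectwise weak equivalences of $\Gamma$-categories, and that $\aK$ is homotopically meaningful because $\aK\aC(\fpo)\iso N\aC$ by Proposition~\ref{propKspec}; this already shows that all five categories have ``the same'' $K$-theory and, more to the point, that a strict map, strong map, or op-lax map between symmetric monoidal categories is a weak equivalence iff it induces a weak equivalence on nerves, uniformly. Second, I would invoke the strictification result of \cite[4.2]{MayPerm}: every symmetric monoidal category $\aC$ receives a strong symmetric monoidal equivalence $\aC'\to\aC$ (or $\aC\to\aC'$) from a permutative category $\aC'$, naturally in strong maps, giving the inverse equivalence to (i)$\hookrightarrow$(iii). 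Third, to push this down to (i)$\hookrightarrow$(iv) and include variants (ii) and (v), I would use the argument pattern of \cite[4.2]{ElmendorfMandell2}: the inclusion functors admit no honest adjoints in general, but one can still produce the needed zig-zags of natural weak equivalences by applying the strictification levelwise and checking that it is compatible with the weaker classes of morphisms (op-lax, lax) because the structure maps $\lambda$ are sent to the corresponding structure maps. Finally, I would assemble these into the statement that inverting weak equivalences in each of (i)--(v) yields equivalent homotopy categories, noting that the equivalences are all compatible with $K$-theory.

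The plan is to do the bookkeeping of step three carefully: the main obstacle is not the existence of strictifications (that is classical) but the \emph{functoriality} of the comparison in the presence of the lax and op-lax morphism classes. Unlike the strong case, where strictification is functorial in strong maps, one must verify that the chosen strictification functor — or a mild variant of it — sends a strictly unital op-lax (resp.\ lax) map to a strictly unital op-lax (resp.\ lax) map and that the unit/counit natural transformations of the strictification are op-lax (resp.\ lax) natural transformations. This is where the hypotheses ``strictly unital'' do real work: the new disjoint-unit construction $U$ discussed above, together with the fact that $U\aC\to\aC$ is a strict (hence strong) map inducing a nerve equivalence, lets one reduce the unit bookkeeping to the strictly unital case. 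I expect that once one has matched up the structure maps, the required diagrams commute by the same coherence arguments that make $\aK$ well-defined on op-lax and lax maps in the first place, so the verification, while tedious, is routine in the manner the paper has already signaled by the phrase ``can be proved using the methods of \cite[4.2]{MayPerm} and \cite[4.2]{ElmendorfMandell2}.''
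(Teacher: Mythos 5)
The paper itself offers no proof of this theorem---it is stated ``for completeness of exposition'' with a pointer to the methods of \cite[4.2]{MayPerm} and \cite[4.2]{ElmendorfMandell2}---so your plan can only be measured against what those methods actually deliver. Your ingredients for comparing (i), (ii) and (iii) are the right ones: object-level strictification in the style of May, plus the observation (true by definition here, since weak equivalences in all five categories are declared to be nerve equivalences) that no compatibility of weak equivalence classes needs checking. The invocation of the functor $U$ is beside the point, however: $U$ relaxes the \emph{unit} condition, and every morphism class appearing in the statement is already strictly unital, so $U$ does no work in this theorem.

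The genuine gap is in your third step. Even if the strictification functor can be arranged to send a strictly unital op-lax (resp.\ lax) map to a strictly unital op-lax (resp.\ lax) map between permutative categories, what you obtain is a functor from category (iv) (resp.\ (v)) to the category of \emph{permutative categories and op-lax (resp.\ lax) maps}---not to category (i), whose morphisms are strict. Strictifying objects does not rectify morphisms, and the essential content of \cite[4.2]{ElmendorfMandell2} is precisely a rectification of the morphisms: a construction replacing an op-lax map by a (zig-zag of) strict maps, coherently enough to induce a functor on homotopy categories. Your plan never supplies this, and ``checking that the structure maps $\lambda$ are sent to the corresponding structure maps'' cannot supply it. Within the present paper the needed rectification is available almost for free: $\aP\circ\aK$ is a functor from symmetric monoidal categories and strictly unital op-lax maps to permutative categories and \emph{strict} maps (with Variants~\ref{varlax} and~\ref{varPlax} handling the lax case (v)), and Theorem~\ref{thmPK} together with Corollary~\ref{corPK} shows that $\alpha\colon\aP\aK\aC\to\aC$ is a natural weak equivalence, strictly natural in strict maps and natural up to a natural transformation---hence up to equality in the localized category---in op-lax maps. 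This provides the inverse equivalences for (iii), (iv), (v); note that since $\alpha$ is only strong rather than strict, the comparison of (ii) with (i) still genuinely requires May's strictification.
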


\section{Construction of the inverse $K$-theory functor}
\label{secconsp}

In this section we construct the inverse $K$-theory functor $\aP$ as a
Grothendieck construction (or homotopy colimit) over a category $\aA$
described below.  We construct the natural transformations displayed
in~\eqref{eqnatcat} relating the composites $\aK\aP$ and $\aP\aK$ with
the identity.  This section contains only the constructions; we
postpone almost all homotopical analysis to the next section.

We begin with the construction of the category $\aA$.  As indicated in
the introduction, we define the objects of $\aA$ to consist of the
sequences of positive integers $(n_{1},\dotsc,n_{s})$ for all $s\geq
0$, with $s=0$ corresponding to the empty sequence $()$.  We think of
each $n_{i}$ as the finite (unbased) set $\fun_{i}$, and we define the
maps in $\aA$ to be the maps generated by maps of finite sets,
permutations in the sequence, and partitioning $\fun_{i}$ into
subsets.  We make this precise in the following definition. 

\begin{defn}\label{defA}
For $\aom=(m_{1},\dotsc,m_{r})$ and
$\aon=(n_{1},\dotsc,n_{s})$ with $r,s>0$, we define the morphisms
$\aA(\aom,\aon)$ to be the subset of the maps of finite (unbased) sets
\[
\fum_{1}\amalg  \dotsb \amalg  \fum_{r}\to \fun_{1}\amalg  \dotsb \amalg  \fun_{s}
\]
satisfying the property that the inverse image of each subset
$\fun_{j}$ is either empty or contained in a single $\fum_{i}$ (depending on $j$).
For the object $()$, we define $\aA((),\aon)$ consist of a single point
for all $\aon$ in $\aA$ and we define $\aA(\aom,())$ to be empty for
$\aom\neq()$. 
\end{defn}

For a $\Gamma$-category $\aX$, let $A\aX()=\aX(\fpz)$ and
\[
A\aX(n_{1},\dotsc,n_{s})=\aX(\fpn_{1})\times \dotsb \times \aX(\fpn_{s}).
\]
For a map $\phi \colon \aom\to \aon$ in $\aA$, define
\[
A\phi \colon
\aX(\fpm_{1})\times \dotsb \times \aX(\fpm_{r})\to
\aX(\fpn_{1})\times \dotsb \times \aX(\fpn_{s})
\]
as follows.  If $s=0$, then $r=0$ and $\phi$ is the identity, and we
take $A\phi$ to be the identity.  If $r=0$ and $s>0$, we take $A\phi$
to be the map $\aX(\fpz)\to \aX(\fpn_{j})$ on each coordinate.  If
$r>0$, then by definition, for each $j$, the subset $\fun_{j}$ of
\[
\fun_{1}\amalg \dotsb \amalg \fun_{s}
\]
has inverse image either empty or contained in a single $\fum_{i}$ for
some $i$; if the inverse image is
non-empty, then $\phi$ restricts to a map of unbased sets
$\fum_{i}\to\fun_{j}$, which we extend to a map of based sets
$\fpm_{i}\to\fpn_{j}$ that is the identity on the basepoint $0$. 
In this case, we define $A\phi$ on the $j$-th coordinate to be the
composite of the projection
\[
\aX(\fpm_{1})\times \dotsb \times \aX(\fpm_{r})\to \aX(\fpm_{i})
\]
and the map $\aX(\fpm_{i})\to \aX(\fpn_{j})$ induced by the restriction of
$\phi$.  In the case when the inverse image of $\fun_{j}$ is empty, we
define $A\phi$ on the $j$-th coordinate to be the composite of the projection
\[
\aX(\fpm_{1})\times \dotsb \times \aX(\fpm_{r}) \to *=\aX(\fpz)
\]
and the map $\aX(\fpz)\to \aX(\fpn_{j})$.  An easy check gives the
following observation.

\begin{prop}\label{propAfunc}
$A$ is a functor from the category of small $\Gamma$-categories to
the category of functors from $\aA$ to the category of small categories.
\end{prop}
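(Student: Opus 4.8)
The claim is that $A$ is functorial in two senses at once: for each $\Gamma$-category $\aX$, the assignment $\aom\mapsto A\aX(\aom)$ together with the maps $A\phi$ constitutes a functor $\aA\to\Cat$; and the assignment $\aX\mapsto A\aX$ is itself functorial in $\aX$. The plan is to verify each of these in turn, and both reduce to bookkeeping with the combinatorics of the maps in $\aA$ set against the strict functoriality of $\aX$ on $\aF$.

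The main step is to check that $A$, applied to a fixed $\aX$, respects composition in $\aA$: given $\phi\colon\aom\to\aon$ and $\psi\colon\aon\to\aop$, one must show $A(\psi\circ\phi)=A\psi\circ A\phi$. The degenerate cases ($r=0$ or $s=0$ or the target $()$) are immediate from the definitions, so assume all sequences are nonempty. Here is the key combinatorial point: for each coordinate $k$ of $\aop$, the subset $\fup_k$ has inverse image under $\psi$ either empty or inside a single $\fun_j$; and $\fun_j$ in turn has inverse image under $\phi$ either empty or inside a single $\fum_i$. So the inverse image of $\fup_k$ under $\psi\circ\phi$ is empty or contained in a single $\fum_i$ — which is exactly the condition defining $\aA(\aom,\aop)$, so $\psi\circ\phi$ is indeed a morphism of $\aA$. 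Moreover, when this inverse image is nonempty, $\psi\circ\phi$ restricts to the composite of the restrictions $\fum_i\to\fun_j$ and $\fun_j\to\fup_k$, and extending to based maps $\fpm_i\to\fpn_j\to\fpp_k$ (identity on basepoints) is compatible with composition in $\aF$. Applying $\aX$ — and here strictness of $\aX$ as a functor on $\aF$ is essential, so that $\aX(\psi|\circ\phi|)=\aX(\psi|)\circ\aX(\phi|)$ on the nose — together with the fact that projections compose with projections, gives the desired equality on the $k$-th coordinate. The case where the inverse image of $\fup_k$ is empty but that of $\fun_j$ is not (so the map factors through $\aX(\fpz)=*$) is handled the same way, using that any map out of the terminal category is unique. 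Preservation of identities is immediate. This establishes that $A\aX\colon\aA\to\Cat$ is a functor.

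For the second half, fix a map of $\Gamma$-categories $f\colon\aX\to\aY$. Define $Af\colon A\aX\to A\aY$ coordinatewise by $f_{n_1}\times\dotsb\times f_{n_s}$ on $A\aX(\aon)$ (and the unique map on $A\aX()=*$). To see this is a natural transformation of functors $\aA\to\Cat$, one checks that for every $\phi\colon\aom\to\aon$ in $\aA$ the square with $Af$ on the sides and $A\phi$ on top and bottom commutes; coordinatewise this is exactly the defining commuting square for a map of $\Gamma$-categories (the one displayed in the paper, applied to $\phi|\colon\fpm_i\to\fpn_j$), together with compatibility of $f$ with the terminal-object coordinates. Functoriality of $f\mapsto Af$ (preservation of composites and identities of $\Gamma$-categories) is then immediate coordinatewise.

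I do not expect a serious obstacle here — this is the ``easy check'' the paper alludes to. The only place requiring care is the composition step above, and specifically the observation that the ``single $\fum_i$'' condition is preserved under composition; if one were sloppy one might worry that the partition structure could spread out, but the nesting argument shows it does not. Strictness of $\aX$ on $\aF$ (emphasized earlier in the paper) is what makes $A\aX$ a genuine functor rather than a pseudofunctor, so it is worth flagging that this is exactly where that hypothesis is used.
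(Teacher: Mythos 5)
Your proof is correct, and it is exactly the verification the paper waves at with ``An easy check gives the following observation'' — the paper supplies no argument, and your nesting observation $(\psi\circ\phi)^{-1}(\fup_k)\subseteq\phi^{-1}(\fun_j)\subseteq\fum_i$, together with strict functoriality of $\aX$ on $\aF$ for the based extensions of the restrictions, is the content of that check.
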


We can now define the functors $\aP$ and $P$, at least on the level of functors
to small categories.

\begin{defn}\label{defP}
Let $\aP\aX=\aA\groth A\aX$.  Let $P=\aP\circ \simp$.
\end{defn}

More concretely, the category $\aP\aX$ has as objects the disjoint
union of the objects of $A\aX(\aon)$ where $\aon$ varies over the
objects of $\aA$.  For $x\in A\aX(\aom)$ and $y\in A\aX(\aon)$, a map
in $\aP\aX$ from $x$ to $y$ consists of a map $\phi \colon \aom\to
\aon$ in $\aA$ together with a map $\phi_{*}x\to y$ in $A\aX(\aon)$,
where $\phi_{*}=A\phi$ is the functor $A\aX(\aom)\to A\aX(\aon)$ above.

\begin{var}\label{varPlax}
We can regard $A\aX$ as a contravariant functor on $\aA^{\op}$ and
form the contravariant Grothendieck construction $\aPlax\aX=\aA^{\op}\groth
A\aX$.  This has the same objects as $\aP\aX$ but for
$x\in A\aX(\aom)$ and $y\in A\aX(\aon)$, a map
in $\aPlax\aX$ from $x$ to $y$ consists of a map $\phi \colon \aon\to
\aom$ in $\aA$ together with a map $x\to \phi_{*}y$.  This functor is
better adapted to the category of symmetric monoidal categories and
strictly unital lax maps.  All results and constructions in this paper
admit analogues
for $\aPlax$, replacing ``op-lax'' with ``lax'' in the work below.
\end{var}

The category $\aA$ has the structure of a permutative category
under concatenation of sequences, with the empty sequence as the unit
and the symmetry morphisms induced by permuting elements in the sequences.
The category of small categories is symmetric monoidal under
cartesian product and the functor $A\aX\colon \aA\to \Cat$ associated
to a $\Gamma$-category $\aX$ is a strong symmetric monoidal functor.  For
formal reasons, then the Grothendieck construction $\aP\aX$ naturally obtains the
structure of a symmetric monoidal category; we
can describe this structure concretely as follows.  For any object 
$x$ in $A\aX(\aom)$, we can write $x=(x_{1},\dotsc,x_{r})$ for objects
$x_{i}$ in $\aX(\fpm_{i})$; then for $y$ in $A\aX(\aon)$, 
\[
x \boxbin y = (x_{1},\dotsc,x_{r},y_{1},\dotsc,y_{s}) 
\in \Ob A\aX(m_{1},\dotsc,m_{r},n_{1},\dotsc,n_{s}),
\]
where we understand the unique object of $A\aX()$ as a strict
unit.  The product on maps admits an analogous description.  This
concrete description makes it clear that $\aP\aX$ is in fact a
permutative category.  Moreover, a map $\aX\to \aY$ of $\Gamma$-categories induces a strict map of permutative categories $\aP\aX\to
\aP\aY$.   We obtain the following theorem.

\begin{thm}
$\aP$ defines a functor from the category of $\Gamma$-categories to the
category of permutative categories and strict maps.
\end{thm}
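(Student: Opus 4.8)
The plan is to verify in turn that $\aP$ is well-defined on objects, that it sends maps of $\Gamma$-categories to strict maps of permutative categories, and that it respects composition and identities. Most of this has in fact been set up in the preceding discussion, so the task is mainly to assemble those pieces and to make the functoriality bookkeeping explicit. First I would recall that Proposition~\ref{propAfunc} gives a functor $A$ sending a $\Gamma$-category $\aX$ to a diagram $A\aX\colon\aA\to\Cat$, and that Definition~\ref{defP} applies the Grothendieck construction $\aA\groth(-)$ to this diagram. Since $\aA$ is a small category and $A\aX$ takes values in small categories, $\aP\aX=\aA\groth A\aX$ is again a small category; the concrete description of its objects (the disjoint union of the $\Ob A\aX(\aon)$) and of its morphisms (a pair consisting of $\phi\colon\aom\to\aon$ in $\aA$ and a morphism $\phi_{*}x\to y$ in $A\aX(\aon)$) is exactly the usual covariant Grothendieck construction, whose categorical axioms are standard.

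Next I would install the permutative structure. The category $\aA$ is permutative under concatenation of sequences, with the empty sequence as strict unit, and $A\aX\colon\aA\to\Cat$ is a strong symmetric monoidal functor because $A\aX(\aom)\times A\aX(\aon)$ is canonically isomorphic to $A\aX(\aom,\aon)$ via the (strictly associative) cartesian product on $\Cat$, compatibly with the action of morphisms in $\aA$. Applying the general fact that the Grothendieck construction of a (strong) symmetric monoidal functor out of a permutative base is symmetric monoidal, $\aP\aX$ inherits a symmetric monoidal structure, and the concrete formula $x\boxbin y=(x_{1},\dotsc,x_{r},y_{1},\dotsc,y_{s})$ given above, together with the analogous formula on morphisms, shows directly that associativity is strict and that the unique object of $A\aX()$ is a strict unit; the symmetry is the shuffle isomorphism coming from the symmetry of $\aA$. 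Hence $\aP\aX$ is a permutative category.

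Finally I would check functoriality in $\aX$. A map $f\colon\aX\to\aY$ of $\Gamma$-categories is, by Proposition~\ref{propAfunc}, a natural transformation $Af\colon A\aX\to A\aY$ of diagrams $\aA\to\Cat$; the Grothendieck construction is itself functorial in such natural transformations, so we obtain $\aP f\colon\aP\aX\to\aP\aY$, acting as $f_{m_{i}}$ on each coordinate of each object and morphism, and leaving the $\aA$-component $\phi$ untouched. Because $Af$ consists of strong symmetric monoidal transformations (indeed it is built from the coordinatewise functors $f_{m_{i}}$, which commute strictly with cartesian products), $\aP f$ strictly preserves $\boxbin$, the unit, and the symmetry, i.e.\ it is a strict map of permutative categories. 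Preservation of identities and composites follows because the assignment $f\mapsto Af$ is functorial and the Grothendieck construction is functorial. I do not expect any serious obstacle here: the only mild subtlety is making sure the symmetric monoidal structure on $\aA\groth A\aX$ is genuinely strictly associative and unital (rather than merely up to coherent isomorphism), but this is guaranteed by the strictness of the cartesian product in $\Cat$ and the strictness of concatenation in $\aA$, which is precisely what the concrete description above records.
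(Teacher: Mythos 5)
Your proposal is correct and follows essentially the same route as the paper: the permutative structure on $\aA$ under concatenation, the observation that $A\aX$ is a strong symmetric monoidal functor to $(\Cat,\times)$, the concrete concatenation formula witnessing strict associativity and unitality, and functoriality via Proposition~\ref{propAfunc} together with functoriality of the Grothendieck construction. The paper treats all of this as the discussion immediately preceding the theorem; your write-up just makes the same bookkeeping explicit.
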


Next we construct the natural transformations of~\eqref{eqnatcat}.
Starting with a symmetric monoidal category $\aC$, we construct the
map $\aP\aK\aC\to \aC$ using the homotopy colimit property of the
Grothendieck construction.  Specifically, we construct functors
$\alpha_{\aom}$ from $A\aP\aK\aC(\aom)$ to $\aC$ and suitably compatible
natural transformations for the maps in $\aA$.

For each $\aom$ in $\aA$, define the functor
\[
\alpha_{\aom}\colon A\aK\aC(\aom)=\aK\aC(\fpm_{1})\times \dotsb \times \aK\aC(\fpm_{r})
\to \aC
\]
to take the object $\aob{X}=(X_{1},\dotsc,X_{r})$ to 
\[
(\dotsb(x^{1}_{\fum_{1}} \boxbin x^{2}_{\fum_{2}})\boxbin\dotsb)\boxbin x^{r}_{\fum_{r}}
\]
where $X_{i}=(x^{i}_{I},f_{I,J})$ for $I\subset \fum_{i}$, and
likewise for maps in $\aK\aC(\aom)$.  For $\aom=()$, we understand
$\alpha_{()}$ to include the category $A\aK\aC()=*$ in $\aC$ as the unit
$u$ and the identity on $u$.

For a map $\phi$ in $\aA$ from $\aom$ to $\aon$, define
\[
\alpha_{\phi}\colon \alpha_{\aom}(y_{1},\dotsc,y_{r})\to \alpha_{\aon}(\phi_{*}(y_{1},\dotsc,y_{r}))
\]
to be the map induced by the associativity, symmetry, and inverse unit
isomorphisms in $\aC$ and the maps $f^{i}_{I_{1},\dotsc,I_{k}}$ in $y_{i}$: if $\phi$ sends
$\fum_{i}$ into $\fun_{j_{1}},\dotsc,\fun_{j_{t}}$, then composing
maps $f^{i}_{I,J}$ in $y_{i}$ gives a well-defined map
\[
f_{I_{1},\dotsc,I_{t}}\colon x^{i}_{\fum_{i}}\to 
(\dotsb (x^{i}_{I_{1}}\boxbin
x^{i}_{I_{2}})\boxbin \dotsb )\boxbin x^{i}_{I_{t}}
\]
where $I_{k}$ is the subset of $\fum_{i}$ landing in
$\fun_{j_{k}}$.  The map $\alpha_{\phi}$ is a natural transformation of
functors from $\alpha_{\aom}$ to $\alpha_{\aon}\circ \phi_{*}$.  Moreover, given
a map $\psi$ from $\aon$ to $\aop$ in $\aA$, the following diagram
commutes
\[
\xymatrix{%
\alpha_{\aom} \aob{X}\ar[r]^{\alpha_{\phi}}\ar[d]_{\alpha_{\psi \circ \phi}}
&\alpha_{\aon}(\phi_{*}\aob{X})\ar[d]^{\alpha_{\psi}}\\
\alpha_{\aop}((\psi \circ \phi)_{*}\aob{X})
\ar@{{}{}{}}[r]|{\textstyle=}
&\alpha_{\aop}(\psi_{*}\phi_{*}\aob{X})
}
\]
for any $\aob{X}$ in $A\aP\aK\aC(\aom)$.

\begin{defn}
Let $\alpha \colon \aP\aK\aC\to \aC$ be the functor $\aA\groth AX\to\aK\aC$ that
sends $\aob{X}$ in $A\aK\aC(\aom)$ to $\alpha_{\aom}\aob{X}$ and sends the
map 
\[
\phi\colon \aom\to \aon, \quad f\colon \phi_{*}\aob{X}\to \aob{Y}
\]
to the map $\alpha_{\aon}(f)\circ \alpha_{\phi}$ in $\aC$.
\end{defn}

Examining the construction of $\alpha$ and the symmetric monoidal structure
on $\aP\aK\aC$, we obtain the following theorem.

\begin{thm}\label{thmPK}
The functor $\alpha \colon \aP\aK\aC\to \aC$ satisfies the following properties.
\begin{enumerate}
\item $\alpha$ is a strictly unital strong map of symmetric monoidal categories.
\item\label{ennatnat} $\alpha$ is natural up to natural transformation
in strictly unital op-lax maps.
\item $\alpha $ is a strict map when $\aC$ is a permutative category.
\item\label{ennatstr} $\alpha$ is natural in strict maps.
\end{enumerate}
\end{thm}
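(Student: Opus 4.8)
The plan is to verify each of the four properties by unwinding the explicit description of $\alpha$ given above and checking the relevant commutative diagrams; each is a matter of comparing two composites of coherence isomorphisms in $\aC$ with structure maps $f^i_{I,J}$, and the coherence theorem for symmetric monoidal categories handles the bookkeeping. For property (i), I would check that $\alpha_{\aom\amalg\aon}$ restricted to the product $A\aK\aC(\aom)\times A\aK\aC(\aon)$ agrees, up to the canonical associativity/reassociation isomorphism $\lambda$, with $\boxbin\circ(\alpha_{\aom}\times\alpha_{\aon})$: both send a pair $(\aob X,\aob Y)$ to an iterated $\boxbin$-product of the objects $x^i_{\fum_i}$ and $y^j_{\fun_j}$, differing only by how the product is parenthesized, so $\lambda$ is built from associativity isomorphisms and is therefore an isomorphism. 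Naturality of $\lambda$ in maps of $\aP\aK\aC$, and the unit/symmetry/associativity coherence diagrams for a strong map, then follow because on each side everything is assembled from the coherence isomorphisms of $\aC$ together with the $f^i_{\dots}$'s, and any two such composites with the same source and target agree by coherence. Since $\epsilon=\id_u$ by construction ($\alpha_{()}$ sends the unit object to $u$ on the nose), $\alpha$ is strictly unital.

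For property (iv), let $F\colon\aC\to\aD$ be a strict map of symmetric monoidal categories. Then $F$ preserves $\boxbin$, $u$, and all the coherence isomorphisms strictly, and the induced map $\aK F\colon\aK\aC\to\aK\aD$ sends $(x_I,f_{I,J})$ to $(F x_I, F f_{I,J})$; chasing an object $\aob X\in A\aK\aC(\aom)$ through $\alpha^{\aD}\circ\aP\aK F$ versus $F\circ\alpha^{\aC}$, both yield the iterated $\boxbin$-product of the $F x^i_{\fum_i}$ associated the same way, and on morphisms both produce the same composite of (images under $F$ of) coherence isomorphisms and structure maps $F f^i_{\dots}$. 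Hence the square commutes strictly and $\alpha$ is natural in strict maps. Property (iii) is the special case (or rather a direct consequence of the concrete description) that when $\aC$ is permutative, the associativity isomorphisms $\alpha$ invokes are identities and the iterated parenthesization is literally concatenation, so $\alpha$ strictly preserves $\boxbin$ and $u$ and is a strict map; combined with (iv) this also upgrades (ii) to (iv) in that setting.

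The main work is property (ii): for a strictly unital op-lax map $(F,\lambda)\colon\aC\to\aD$ I must produce a natural transformation filling the square $\alpha^{\aD}\circ\aP\aK F\Rightarrow F\circ\alpha^{\aC}$ of functors $\aP\aK\aC\to\aD$, natural in $\aP\aK\aC$. The component at an object $\aob X\in A\aK\aC(\aom)$ is the canonical comparison
\[
F\bigl((\dotsb(x^1_{\fum_1}\boxbin x^2_{\fum_2})\boxbin\dotsb)\boxbin x^r_{\fum_r}\bigr)
\longrightarrow (\dotsb(F x^1_{\fum_1}\boxbin F x^2_{\fum_2})\boxbin\dotsb)\boxbin F x^r_{\fum_r}
\]
obtained by iterating $\lambda$ down the chosen parenthesization. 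I would then check naturality of this comparison in morphisms of $\aK\aC(\aom)$ (immediate from naturality of $\lambda$) and, more to the point, compatibility with the morphisms $\alpha_\phi$ for $\phi\colon\aom\to\aon$ in $\aA$: this is the hexagon-type diagram obtained by gluing together, for each block $\fum_i$, the op-lax associativity coherence diagram of $(F,\lambda)$ with the images under $F$ of the factorization $f^i_{I_1,\dots,I_t}$. That compatibility, together with the earlier observation that $\aK$ carries op-lax natural transformations to $\Gamma$-natural transformations, is exactly what's needed to assemble the componentwise data into a natural transformation of functors out of the Grothendieck construction. The one genuine obstacle is verifying this last coherence square, and it is dispatched by the coherence theorem for op-lax monoidal functors (equivalently, by repeated application of the op-lax unit/symmetry/associativity diagrams that are part of the definition of an op-lax map): any two ways of building a map from $F(\text{iterated }\boxbin)$ to an iterated $\boxbin$ of $F$'s of sub-objects, using only $\lambda$ and the coherence isomorphisms, coincide.
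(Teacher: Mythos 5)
Your proposal is correct and takes essentially the same route as the paper, which simply asserts the theorem by ``examining the construction'' and identifies the filler for part~(ii) as the same iterated comparison map $\lambda\colon F(x_{1}\boxbin\dotsb\boxbin x_{r})\to F(x_{1})\boxbin\dotsb\boxbin F(x_{r})$ that you construct. Your write-up supplies the coherence/naturality verifications the paper leaves implicit, with no gaps of substance.
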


The meaning of~\enref{ennatnat} and~\enref{ennatstr} is that for a
strictly unital op-lax map $F\colon \aC\to \aD$, the diagram
\[
\xymatrix{%
\aP\aK\aC\ar[r]\ar[d]_{\alpha}&\aP\aK\aD\ar[d]^{\alpha}\\
\aC\ar[r]&\aD
}
\]
commutes up to natural transformation, namely, the natural transformation
\[
\lambda \colon F(x_{1}\boxbin \dotsb \boxbin x_{r})\to F(x_{1})\boxbin
\dotsb \boxbin F(x_{r})
\]
which is part of the structure of the op-lax map.  When $\aC\to\aD$ is
a strict map, the diagram commutes strictly (the natural
transformation is the identity). 

For the remaining natural transformation in \eqref{eqnatcat}, note that for a
$\Gamma$-category $\aX$, we have a canonical inclusion $\iota \colon \aX(\fpn)\to
\aK\aP\aX(\fpn)$ sending an object $x$ in $\aX(\fpn)$ to the object
$\iota x=(x_{I},f_{I,J})$ in $\aK\aP\aX(\fpn)$ with 
\[
x_{I}=\pi^{I}_{*}(x) \in \aX(\fpm)=A\aX(m)
\]
for $m=|I|$, $I=\{i_{1},\dotsc,i_{m}\}$ with $i_{1}<\dotsb <i_{m}$,
and $\pi^{I}\colon \fpn\to \fpm$ the map that sends $i_{k}$ to $k$ and
every other element of $\fpn$ to $0$.  The map
\[
f_{I,J}\colon x_{I\cup J}\to x_{I}\boxbin x_{J}=(x_{I},x_{J})\in A\aX(|I|,|J|)
\]
is induced by the map $(|I\cup J|)\to (|I|,|J|)$ in $\aA$
corresponding to the partition of the ordered set $I\cup J$ into $I$
and $J$. This does not fit together into a map of $\Gamma$-categories:
For a map $\phi \colon \fpm\to\fpn$ in $\aF$, 
\[
\iota (\phi_{*}x)_{I}=\pi^{I}_{*}(\phi_{*}x),
\qquad \text{but} \qquad 
\phi_{*}(\iota x)_{I}=\pi^{\phi^{-1}I}_{*}(x).
\]
Writing $\phi'$ for the map in $\aN$ corresponding to the restriction
of $\phi$ to the map $\phi^{-1}I\to I$, then
\[
\pi^{I}\circ \phi = \phi' \circ \pi^{\phi^{-1}I}
\]
in $\aF$.  We can interpret $\phi'$ as a map
\[
\pi^{\phi^{-1}I}_{*}(x)\to \pi^{I}_{*}(\phi_{*}x)
\]
in $\aP\aX$.  These maps in turn assemble to a map
\[
\omega_{\phi} \colon \phi_{*}\iota x \to \iota \phi_{*}x
\]
in $\aK\aP\aX$, natural in $x$.

\begin{defn}\label{defW}
For a $\Gamma$-category $\aX$, let $\aW\aX(\fpn)$ be the category whose
objects consist of triples $(y,x,g)$ with $y$ an object of
$\aK\aP(\fpn)$, $x$ an object of $\aX(\fpn)$ and $g\colon y\to \iota
x$ a map in $\aK\aP(\fpn)$.  The morphisms of $\aW\aX(\fpn)$ are the
commuting diagrams.  For $\phi \colon \fpm\to \fpn$ in $\aF$, define 
\[
\aW\aX(\phi)\colon \aW\aX(\fpm)\to \aW\aX(\fpn)
\]
to be the functor that takes $(y,x,g)$ to
$(\phi_{*}y,\phi_{*}x,\omega_{\phi}\circ \phi_{*}g)$. 
\end{defn}

We note for later use that for $(y,x,g)$ an object in $\aW\aX(\fpn)$,
writing $y=(y_{I},f_{I,J})$ with $y_{I}$ in $\aP\aX$, we must have
each $y_{I}$ in $A\aX()$ or $A\aX(m_{I})$ for some $m_{I}$.  This is
because $\iota x_{I}$ is in $A\aX(n)$ and maps in $\aA$ cannot
decrease the length of the sequence.

We claim that the categories $\aW\aX(\fpn)$ and functors
$\aW\aX(\phi)$ assemble into a
$\Gamma$-category. For $\phi \colon \fpm\to \fpn$, $\psi
\colon \fpn\to \fpp$, and 
$I\subset \fup$, write
\[
\pi^{I}\circ \psi \circ \phi 
= \psi' \circ \pi^{\psi^{-1}I}\circ \phi 
= \psi'\circ \phi' \circ \pi^{\phi^{-1}(\psi^{-1}I)}
\]
as above, with $\psi'\colon \psi^{-1}I\to I$ and $\phi'\colon
\phi^{-1}(\psi^{-1}I)\to \psi^{-1}I$ the restrictions of $\psi$ and
$\phi$, using the natural order on $I\subset \fup$,
$\psi^{-1}I\subset \fun$, and $\phi^{-1}(\psi^{-1}I)\subset \fum$ to
view these as maps in $\aF$.  Then $\psi'\circ \phi'\colon (\psi \circ
\phi)^{-1}I\to I$ is the restriction of $\psi \circ \phi$; this is the
check required to see that the diagram

\[
\xymatrix@C-2pc{%
\pi^{(\psi \circ \phi)^{-1}I}_{*}x\ar[rr]\ar[dr]
&&\pi^{\psi^{-1}I}_{*}(\phi_{*}x)\ar[dl]\\
&\pi^{I}_{*}((\psi \circ \phi)_{*}x)
}
\]
in $\aP\aX$ commutes.  Examination of the structure maps $f_{I,J}$ in
$\iota x$ shows that
the diagram
\[
\xymatrix@C-2pc{%
(\psi \circ \phi)_{*}\iota x\ar[rr]^{\psi_{*}\omega_{\phi}}
\ar[dr]_{\omega_{\psi \circ \phi}}
&&\psi_{*}\iota (\phi_{*}x)\ar[dl]^{\omega_{\psi}}\\
&\iota ((\psi \circ \phi)_{*}x)
}
\]
in $\aK\aP\aX$ commutes.  This proves the following theorem.

\begin{thm}\label{thmconsW}
The maps $\aW\aX(\phi)$ above make $\aW\aX$ into a $\Gamma$-category.
\end{thm}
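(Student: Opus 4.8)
The plan is to verify that $\aW\aX$, as defined on objects and on morphisms $\phi$ of $\aF$ by $\aW\aX(\phi)(y,x,g)=(\phi_{*}y,\phi_{*}x,\omega_{\phi}\circ\phi_{*}g)$, respects identities and composition, i.e., that $\aW\aX(\id_{\fpn})=\id$ and $\aW\aX(\psi\circ\phi)=\aW\aX(\psi)\circ\aW\aX(\phi)$ strictly (as required of a $\Gamma$-category). The identity case is immediate: $\omega_{\id_{\fpn}}$ is the identity map, since $\pi^{I}\circ\id=\id\circ\pi^{I}$ forces the restriction map $(\id)'\colon I\to I$ to be the identity in $\aF$, so $\aW\aX(\id)(y,x,g)=(y,x,g)$. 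The substance is the composition identity.

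First I would unwind what $\aW\aX(\psi)\circ\aW\aX(\phi)$ does to $(y,x,g)$: it sends it to $(\psi_{*}\phi_{*}y,\,\psi_{*}\phi_{*}x,\,\omega_{\psi}\circ\psi_{*}(\omega_{\phi}\circ\phi_{*}g))=(\psi_{*}\phi_{*}y,\,\psi_{*}\phi_{*}x,\,\omega_{\psi}\circ\psi_{*}\omega_{\phi}\circ(\psi\circ\phi)_{*}g)$, using that $\aP\aX$-functoriality (strictness of $\aX$) gives $\psi_{*}\phi_{*}=(\psi\circ\phi)_{*}$. On the other hand $\aW\aX(\psi\circ\phi)$ sends it to $((\psi\circ\phi)_{*}y,\,(\psi\circ\phi)_{*}x,\,\omega_{\psi\circ\phi}\circ(\psi\circ\phi)_{*}g)$. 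The first two coordinates agree on the nose by strict functoriality of $\aX$ (hence of $\aP\aX$ and $\aK\aP\aX$). So the whole theorem reduces to the single cocycle identity
\[
\omega_{\psi}\circ\psi_{*}\omega_{\phi}=\omega_{\psi\circ\phi}
\colon (\psi\circ\phi)_{*}\iota x\to\iota((\psi\circ\phi)_{*}x)
\]
in $\aK\aP\aX(\fpp)$, which is precisely the commuting triangle displayed just before the theorem statement. So the plan is to prove that triangle commutes.

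To prove that triangle, I would work coordinatewise over $I\subset\fup$. A map in $\aK\aP\aX(\fpp)$ is determined by its components $x_{I}$ of the underlying objects together with maps $h_{I}\colon x_{I}\to x'_{I}$ compatible with the $f_{I,J}$, so it suffices to check (a) that on the $I$-component of the underlying object, the two composites $(\psi\circ\phi)_{*}\iota x\to\iota((\psi\circ\phi)_{*}x)$ agree, and (b) that the requisite compatibility with the partition maps $f_{I,J}$ holds. For (a), the $I$-component of $\omega_{\psi\circ\phi}$ is the map $(\psi\circ\phi)'\colon\pi^{(\psi\circ\phi)^{-1}I}_{*}x\to\pi^{I}_{*}((\psi\circ\phi)_{*}x)$ in $\aP\aX$ induced by the restriction of $\psi\circ\phi$ to $(\psi\circ\phi)^{-1}I\to I$, regarded as a map in $\aN$ and then in $\aF$ via the natural orderings; the $I$-component of $\psi_{*}\omega_{\phi}$ is $\phi'\colon\pi^{\phi^{-1}(\psi^{-1}I)}_{*}x\to\pi^{\psi^{-1}I}_{*}(\phi_{*}x)$, and of $\omega_{\psi}$ is $\psi'\colon\pi^{\psi^{-1}I}_{*}(\phi_{*}x)\to\pi^{I}_{*}((\psi\circ\phi)_{*}x)$. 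So (a) is exactly the claim that $\psi'\circ\phi'=(\psi\circ\phi)'$ as maps $(\psi\circ\phi)^{-1}I\to I$ in $\aF$ — equivalently, that the triangle
\[
\xymatrix@C-2pc{
\pi^{(\psi \circ \phi)^{-1}I}_{*}x\ar[rr]\ar[dr]
&&\pi^{\psi^{-1}I}_{*}(\phi_{*}x)\ar[dl]\\
&\pi^{I}_{*}((\psi \circ \phi)_{*}x)
}
\]
in $\aP\aX$ commutes. This in turn comes down to the set-level factorization $\pi^{I}\circ\psi\circ\phi=\psi'\circ\pi^{\psi^{-1}I}\circ\phi=\psi'\circ\phi'\circ\pi^{\phi^{-1}(\psi^{-1}I)}$ already written out before the theorem, together with the observation that the order-preserving bijections implicit in passing from $\aN$ to $\aF$ compose correctly — since the natural orderings on $(\psi\circ\phi)^{-1}I\subset\fum$, $\psi^{-1}I\subset\fun$, $I\subset\fup$ are inherited consistently, and restriction of an order-preserving-on-each-block map along these inclusions is associative. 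For (b), I would note that in $\iota x$ the structure map $f_{I,J}\colon x_{I\cup J}\to x_{I}\boxbin x_{J}$ is the $\aA$-morphism partitioning the ordered set $I\cup J$ into $I$ and $J$, and both $\omega_{\psi\circ\phi}$ and $\omega_{\psi}\circ\psi_{*}\omega_{\phi}$ are built from the restriction maps $\phi',\psi'$, which commute with passing to subsets; tracking this through gives the second displayed commuting diagram in the excerpt (the one about structure maps), completing the check.

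The main obstacle, and the only place real care is needed, is bookkeeping of the orderings: $\aA$ is a category of finite sets with morphisms being genuine set maps, but the maps $\pi^{I}$ and the ``restrictions'' $\phi',\psi',(\psi\circ\phi)'$ are defined using the \emph{natural order} on the subsets $I\subset\fup$, $\psi^{-1}I\subset\fun$, etc., to convert maps of $\aN$ into maps $\fpm\to\fpn$ in $\aF$. One must confirm that $\psi'\circ\phi'=(\psi\circ\phi)'$ holds exactly, not merely up to an order-preserving relabeling; this is true because the unique order isomorphism $(\psi\circ\phi)^{-1}I\cong\underline{|I|}$ etc. used in each $\pi$ is the one induced by the ambient order, and these are transitively compatible. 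Everything else — agreement of the first two coordinates, the identity axiom, and the passage from the cocycle identity to the statement of Theorem~\ref{thmconsW} — is formal from the strict functoriality of $\aX$ and hence of $\aK$, $\aP$.
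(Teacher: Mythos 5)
Your proposal is correct and follows essentially the same route as the paper: reduce strict functoriality to the cocycle identity $\omega_{\psi}\circ\psi_{*}\omega_{\phi}=\omega_{\psi\circ\phi}$, verify it on $I$-components via the factorization $\pi^{I}\circ\psi\circ\phi=\psi'\circ\phi'\circ\pi^{\phi^{-1}(\psi^{-1}I)}$ and the order-compatibility of the restrictions, and then check compatibility with the structure maps $f_{I,J}$ of $\iota x$. The ordering bookkeeping you flag is exactly the check the paper singles out as the substance of the argument.
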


Since $\aW\aX$ is natural in maps of $\Gamma$-categories $\aX$, we can
regard $\aW$ as an endofunctor on  $\Gamma$-categories.  By
construction, the forgetful functors $\omega \colon \aW\aX\to \aX$ and
$\upsilon \colon \aW\aX\to \aK\aP\aX$ are natural transformations of
endofunctors.  For fixed $\fpn$, the functor $\aW\aX(\fpn)\to
\aX(\fpn)$ is a left adjoint: The right adjoint sends $x$ in
$\aX(\fpn)$ to $(\iota x,x,\id_{\iota x})$ in $\aW\aX(\fpn)$.  It
follows that $\omega$ is always a weak equivalence of $\Gamma$-categories.
We summarize this in the following theorem.

\begin{thm}\label{thmW}
The maps
$\upsilon \colon \aW\aX\to \aK\aP\aX$ and $\omega \colon \aW\aX\to
\aX$ are natural transformations of endofunctors on $\Gamma$-categories, and $\omega$ is a weak equivalence for any $\aX$.
\end{thm}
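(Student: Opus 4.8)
The plan is to verify the three assertions in turn, the first two being immediate from the constructions of the preceding section and the third being the substantive point. First I would observe that naturality of $\upsilon$ and $\omega$ as transformations of endofunctors is formal: for a map $f\colon \aX\to\aY$ of $\Gamma$-categories, the induced map $\aP\aX\to\aP\aY$ and hence $\aK\aP\aX\to\aK\aP\aY$ are defined objectwise, the inclusions $\iota$ are natural in $\aX$ by inspection of their definition in terms of the projections $\pi^I_*$, and the comparison maps $\omega_\phi$ are likewise natural; so sending $(y,x,g)$ to $(f_*y, f_*x, (\iota f)\circ g)$ gives a functor $\aW f$ over both $\aK\aP f$ and $f$, and these assignments respect the $\Gamma$-structure maps $\aW\aX(\phi)$ because all the ingredients do. This is just unwinding Definition~\ref{defW} and Theorem~\ref{thmconsW}.

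The heart of the theorem is that $\omega\colon\aW\aX\to\aX$ is a weak equivalence, i.e., that each $\omega_n\colon\aW\aX(\fpn)\to\aX(\fpn)$ induces a weak equivalence on nerves. For this I would exhibit $\omega_n$ as a left adjoint, as indicated: define $\rho_n\colon\aX(\fpn)\to\aW\aX(\fpn)$ by $\rho_n(x)=(\iota x,\,x,\,\id_{\iota x})$, and on a morphism $h\colon x\to x'$ by $(\iota h, h)$ — note $\iota$ is functorial on $\aX(\fpn)$ since it is a functor, so $\iota h\colon \iota x\to\iota x'$ is a well-defined morphism in $\aK\aP\aX(\fpn)$ compatible with the identity maps. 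Plainly $\omega_n\circ\rho_n=\id$. For the adjunction $\omega_n\dashv\rho_n$ one checks that a morphism $(y,x,g)\to\rho_n(x')=(\iota x',x',\id)$ in $\aW\aX(\fpn)$ — that is, a pair of maps $h\colon y\to\iota x'$ in $\aK\aP\aX(\fpn)$ and $k\colon x\to x'$ in $\aX(\fpn)$ with $\iota k\circ g = h$ — is uniquely determined by $k$ alone (since $h$ is forced to be $\iota k\circ g$); hence the natural bijection $\aW\aX(\fpn)\bigl((y,x,g),\rho_n(x')\bigr)\iso\aX(\fpn)\bigl(\omega_n(y,x,g),x'\bigr)$. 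Thus $\omega_n$ is a left adjoint, and since an adjunction induces a homotopy equivalence on nerves (the unit and counit give the required homotopies via the nerve of the natural transformations), $N\omega_n$ is a homotopy equivalence; as this holds for each $n$, $\omega$ is a weak equivalence of $\Gamma$-categories.

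I expect the only place requiring a moment's care — the "main obstacle" in a proof this elementary — is confirming that $\rho_n$ is genuinely the right adjoint at the level of $\Gamma$-categories one would want, i.e., keeping straight that the adjunction is being asserted and used only objectwise in $\fpn$ (it is not, and need not be, a $\Gamma$-equivariant adjunction: $\rho$ does not commute with the structure maps, precisely because $\omega_\phi$ is not an isomorphism). Since the definition of weak equivalence of $\Gamma$-categories only demands a weak equivalence on nerves objectwise, the objectwise adjunction suffices, and no compatibility of $\rho$ with the $\Gamma$-structure is needed. Once that is noted, the argument is complete; the deeper statement that $\upsilon$ is also a weak equivalence for \emph{special} $\aX$ is deferred, as the theorem states, to Corollary~\ref{corKP}.
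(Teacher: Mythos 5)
Your proposal is correct and follows essentially the same route as the paper: naturality of $\omega$ and $\upsilon$ is immediate from the construction, and the weak equivalence statement comes from exhibiting $\omega_n$ as a left adjoint with right adjoint $x\mapsto(\iota x,x,\id_{\iota x})$, an adjunction that need only hold objectwise in $\fpn$. The paper states this adjunction without spelling out the hom-set bijection; your verification that a map $(y,x,g)\to(\iota x',x',\id)$ is determined by its $\aX(\fpn)$-component is exactly the check being left to the reader.
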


\section{Proof of Theorems~\ref{maincat} and~\ref{thmuncomp}}
\label{secpf}

This section provides the homotopical analysis of the functors and
natural transformations constructed in the previous section.
This leads directly to the proof of the main theorem,
Theorem~\ref{maincat}, and its non-completed variant,
Theorem~\ref{thmuncomp}.

Most of the arguments hinge on the following lemma of
Thomason~\cite{ThomasonHocolim}: 

\begin{lem}[Thomason]\label{lemThomason}
Let $\aA$ be a small category and $F$ a functor from $\aA$ to the
category of small categories.  There is a natural weak equivalence of
simplicial sets
\[
\hocolim_{\aA}NF\to N(\aA\groth F).
\]
\end{lem}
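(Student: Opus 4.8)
\textit{Proof proposal.}
The plan is to use the ``simplicial replacement'' model of the homotopy colimit together with a bisimplicial comparison, following Thomason. First I would recall that, by the Bousfield--Kan construction, $\hocolim_{\aA}NF$ is the diagonal of the bisimplicial set $H$ with $p$-th row $H_{p,\bullet}=\coprod_{\sigma\in N_p\aA}NF(\sigma(0))$, the operators in the $p$-direction being those of $N\aA$ except that the face $d_0$ additionally pushes the $NF(\sigma(0))$-coordinate forward along the first morphism $\sigma(0)\to\sigma(1)$. On diagonal $n$-simplices I would then write down the comparison map $\rho\colon\hocolim_{\aA}NF\to N(\aA\groth F)$ directly: the pair consisting of a string $a_0\xrightarrow{\phi_1}\dotsb\xrightarrow{\phi_n}a_n$ in $\aA$ and a string $x_0\to\dotsb\to x_n$ in $F(a_0)$ is sent to the $n$-simplex
\[
(a_0,x_0)\longrightarrow(a_1,(\phi_1)_*x_1)\longrightarrow\dotsb\longrightarrow(a_n,(\phi_n\dotsm\phi_1)_*x_n)
\]
of $N(\aA\groth F)$, whose $i$-th edge is the morphism $(\phi_i,(\phi_i\dotsm\phi_1)_*(x_{i-1}\to x_i))$. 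A direct check of the simplicial identities shows $\rho$ is a well-defined map of simplicial sets, natural in $\aA$ and $F$; moreover, on the canonical copy of $NF(a)$ that lies over a degenerate simplex $s^na$ and on which every $\phi_i$ is $\id_a$, $\rho$ restricts to the identity of $NF(a)$.

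To prove $\rho$ is a weak equivalence I would first treat the case in which $\aA$ has a terminal object $t$. Then the inclusion $\{t\}\hookrightarrow\aA$ is homotopy cofinal (since $t$ is terminal), so $\hocolim_{\aA}NF\simeq NF(t)$; and the functor $\aA\groth F\to F(t)$ carrying $(a,x)$ to the pushforward of $x$ along the unique map $a\to t$, together with the inclusion of the fibre $F(t)\hookrightarrow\aA\groth F$ and the evident natural transformation to the identity functor, shows that $NF(t)\hookrightarrow N(\aA\groth F)$ is a homotopy equivalence. One checks that $\rho$ fits into the resulting triangle (commuting up to homotopy), so $\rho$ is a weak equivalence in this case. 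For a general $\aA$ I would then reduce to this case by the realization lemma for bisimplicial sets: both $\hocolim_{\aA}NF$ and $N(\aA\groth F)$ admit compatible resolutions as diagonals of bisimplicial sets whose $p$-th rows are built --- as disjoint unions indexed by the $p$-simplices $\sigma\colon[p]\to\aA$ of $N\aA$ --- from the corresponding constructions over the linearly ordered categories $[p]$ ($\coprod_\sigma NF(\sigma(0))$ on the homotopy-colimit side), and $\rho$ is realized by a map of such bisimplicial sets; since each $[p]$ has a terminal object, the previous paragraph shows this map is a weak equivalence in each $p$-degree, so $\rho$, being its diagonal, is a weak equivalence.

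The step I expect to be the real obstacle is the last one, and within it the mismatch of normalizations: the homotopy-colimit model $H$ is built from the values of $NF$ at the \emph{initial} vertices of the strings, whereas over $[p]$ the Grothendieck construction $[p]\groth(F\circ\sigma)$ deformation-retracts onto the fibre over the \emph{terminal} vertex. Organizing the interpolating bisimplicial sets and the comparison maps so that these two normalizations are reconciled while every horizontal and vertical simplicial operator stays compatible --- and so that the relevant diagonals are genuinely $\hocolim_{\aA}NF$ and $N(\aA\groth F)$ rather than merely weakly equivalent substitutes --- together with checking the row-wise equivalences by the cofinality and deformation-retraction arguments above, is exactly the bookkeeping that makes up the bulk of Thomason's argument in \cite{ThomasonHocolim}. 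By contrast, the simplicial-identity verification in the first paragraph and the cofinality and retraction arguments for indexing categories with a terminal object are routine.
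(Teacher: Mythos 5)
Your explicit formula for the comparison map is the same one the paper writes down, and in fact your edge formula $(\phi_i\dotsm\phi_1)_*(x_{i-1}\to x_i)$ is the type-correct version (the paper's $f'_k=F(\phi_{k-1,\dotsc,1})(f_k)$ appears to be off by one application of $F(\phi_k)$, since the $k$-th edge must land in $F(\aon_k)$). The real point of comparison is the weak-equivalence claim, and here the paper proves nothing: it constructs the map and then cites \cite[\S1.2]{ThomasonHocolim}, remarking only that ``a Quillen Theorem~A style argument'' does the job. Your proposed route --- settle the case of an indexing category with a terminal object by cofinality on the homotopy-colimit side and by the deformation retraction $\aA\groth F\to F(t)$ on the Grothendieck side, then reduce the general case to this one via the realization lemma for bisimplicial sets --- is therefore a genuinely different strategy from the Theorem-A-style argument the paper gestures at, and both halves of your terminal-object case are correct as stated. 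What you have not supplied is the interpolating bisimplicial set itself, and you correctly identify this as the crux: the simplicial replacement has $p$-th row $\coprod_{\sigma} NF(\sigma(0))$, whereas the natural candidate with $(p,q)$-simplices the pairs $(\sigma,\tau)$ with $\sigma\in N_p\aA$ and $\tau\in N_q([p]\groth F\sigma)$ retracts row-wise onto the fibres over the \emph{terminal} vertices $\sigma(p)$, and identifying its diagonal with $N(\aA\groth F)$ on the nose requires a further last-vertex argument. Since the paper outsources exactly this content to Thomason, your sketch is no less complete than the paper's treatment; but be aware that a self-contained proof still requires constructing that bisimplicial object and the two comparison maps out of it, which is where all the work lives.
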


The natural transformation is easy to describe.
We write an object of $\aA\groth F$ as $(\aon,x)$ with $\aon$ an object of
$\aA$ and $x$ an object of $F\aon$, and we write a map in $\aA\groth F$
as $(\phi,f)\colon (\aon,x)\to (\aop,y)$ where $\phi\colon \aon\to
\aop$ is a map 
in $\aA$ and $f\colon F(\phi)(x)\to y$ is a map in $F\aop$.  Then a
$q$-simplex of the nerve $N(\aA\groth F)$ is a sequence of $q$
composable maps
\[
\xymatrix{%
(\aon_{0},x_{0})\ar[r]^{(\phi_{1},f_{1})}
&(\aon_{1},x_{1})\ar[r]^(.6){(\phi_{2},f_{2})}
&\dotsb \ar[r]^(.35){(\phi_{q},f_{q})}
&(\aon_{q},x_{q}).
}
\]
Likewise, a $q$-simplex in the homotopy colimit consists of a sequence
of $q$ composable maps in $\aA$ together with $q$ composable maps in $F(\aon_{0})$:
\[
\xymatrix@R-1pc{%
\aon_{0}\ar[r]^{\phi_{1}}
&\aon_{1}\ar[r]^{\phi_{2}}
&\dotsb \ar[r]^{\phi_{q}}
&\aon_{q}\\
x_{0}\ar[r]^{f_{1}}
&x_{1}\ar[r]^{f_{2}}
&\dotsb \ar[r]^{f_{q}}
&x_{q}.
}
\]
The natural transformation sends this simplex of the homotopy colimit
to the simplex 
\[
\xymatrix{%
(\aon_{0},x_{0})\ar[r]^{(\phi_{1},f_{1})}
&(\aon_{1},x'_{1})\ar[r]^(.6){(\phi_{2},f'_{2})}
&\dotsb \ar[r]^(.35){(\phi_{q},f'_{q})}
&(\aon_{q},x'_{q}),
}
\]
where 
$x'_{k}=F(\phi_{k,\dotsc,1})(x_{k})$ and
$f'_{k}=F(\phi_{k-1,\dotsc,1})(f_{k})$
for $\phi_{k,\dotsc,1}=\phi_{k}\circ \dotsb \circ \phi_{1}$.  A
Quillen Theorem~A style argument proves that this map is a weak
equivalence \cite[\S1.2]{ThomasonHocolim}.

Applying Thomason's lemma to the Grothendieck construction $\aA\groth
A\aX$, we get the following immediate observation.

\begin{prop}
$\aP$ preserves weak equivalences.
\end{prop}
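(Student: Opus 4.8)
The plan is to deduce this from Thomason's Lemma~\ref{lemThomason} together with the fact that the homotopy colimit functor is homotopy invariant for objectwise weak equivalences of diagrams of simplicial sets. So first I would fix a weak equivalence of $\Gamma$-categories $\aX\to\aY$, meaning (by definition) that $N\aX\to N\aY$ is a weak equivalence of $\Gamma$-spaces, i.e.\ each $N\aX(\fpn)\to N\aY(\fpn)$ is a weak equivalence of simplicial sets.

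Next I would observe that for each object $\aon=(n_{1},\dotsc,n_{s})$ of $\aA$, the induced functor $A\aX(\aon)\to A\aY(\aon)$ is the product of the functors $\aX(\fpn_{i})\to\aY(\fpn_{i})$ (and the identity on the empty sequence), hence induces on nerves the map
\[
N\aX(\fpn_{1})\times\dotsb\times N\aX(\fpn_{s})\to N\aY(\fpn_{1})\times\dotsb\times N\aY(\fpn_{s}),
\]
which is a weak equivalence since nerve commutes with finite products and a finite product of weak equivalences of simplicial sets is a weak equivalence. Thus $NA\aX\to NA\aY$ is an objectwise weak equivalence of functors $\aA\to\simpset$. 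Since the homotopy colimit (say, the bar construction model) of a diagram of simplicial sets sends objectwise weak equivalences to weak equivalences, $\hocolim_{\aA}NA\aX\to\hocolim_{\aA}NA\aY$ is a weak equivalence. Finally, I would invoke Lemma~\ref{lemThomason}, which gives a natural weak equivalence $\hocolim_{\aA}NA\aX\to N(\aA\groth A\aX)=N\aP\aX$; naturality of that weak equivalence yields a commuting square, and two-out-of-three forces $N\aP\aX\to N\aP\aY$ to be a weak equivalence. By definition of weak equivalence of categories (a functor inducing a weak equivalence on nerves), $\aP\aX\to\aP\aY$ is a weak equivalence, which is the claim.

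There is no real obstacle here; the statement is genuinely immediate from the machinery already in place. The only point needing a word of care is the homotopy invariance of $\hocolim_{\aA}$, which is standard (the bar construction is built levelwise from coproducts, and a levelwise weak equivalence of simplicial objects in $\simpset$ realizes to a weak equivalence), and the compatibility of nerve with finite products used to reduce the diagram-level weak equivalence to the given weak equivalences $N\aX(\fpn)\to N\aY(\fpn)$. If one wished to avoid even mentioning $\hocolim$ homotopy invariance, one could alternatively run a direct Quillen Theorem~$\mathrm{A}$ argument on the Grothendieck constructions using that each fiber $A\aX(\aon)\to A\aY(\aon)$ is a weak equivalence, but routing through Lemma~\ref{lemThomason} is cleaner.
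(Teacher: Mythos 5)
Your argument is correct and is exactly the route the paper takes: the paper states the proposition as an ``immediate observation'' from Thomason's Lemma~\ref{lemThomason} applied to $\aA\groth A\aX$, and your write-up simply fills in the (routine) details of that deduction — objectwise weak equivalence of the diagrams $NA\aX\to NA\aY$, homotopy invariance of $\hocolim_{\aA}$, and naturality plus two-out-of-three.
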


The following theorem provides the main homotopical result we need for
the remaining arguments in this section.

\begin{thm}\label{thmtech}
Let $\aX$ be a special $\Gamma$-category.  Then the inclusion of
$\aX(\fpo)$ in $\aP\aX$ is a weak equivalence.
\end{thm}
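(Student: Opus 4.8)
The plan is to use Thomason's lemma (Lemma~\ref{lemThomason}) to reduce the statement to a computation of the homotopy colimit $\hocolim_{\aA} N(A\aX)$ and then to exhibit a cofinal-type subcategory or a contracting deformation that collapses this homotopy colimit onto the single category $\aX(\fpo)$ sitting over the object $(1)\in\aA$. First I would set up the inclusion explicitly: the object $(1)$ of $\aA$ has $A\aX(1)=\aX(\fpo)$, and the inclusion of $\aX(\fpo)$ in $\aP\aX=\aA\groth A\aX$ is the evident map onto the fiber over $(1)$. Via Thomason's lemma this corresponds, up to natural weak equivalence, to the inclusion of $N\aX(\fpo)$ as one summand of the homotopy colimit.

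The key point is that $\aA$ has a very simple combinatorial structure relative to the functor $A\aX$: a sequence $\aon=(n_1,\dotsc,n_s)$ maps to $A\aX(\aon)=\aX(\fpn_1)\times\dotsb\times\aX(\fpn_s)$, and because $\aX$ is special each factor $\aX(\fpn_i)$ is weakly equivalent to $\aX(\fpo)^{\times n_i}$. Thus $N(A\aX(\aon))$ is weakly equivalent to $N\aX(\fpo)^{\times(n_1+\dotsb+n_s)}$, i.e.\ it only depends (up to weak equivalence) on the total cardinality $|\aon|=\sum n_i$. The morphisms of $\aA$ are exactly the maps of finite sets $\coprod\fun_i\to\coprod\fun_j$ with the partition-refinement condition; among these are all the based maps inside a single $\aX(\fpn_i)$, the permutations, and the partitionings. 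The strategy is therefore to find a subcategory $\aA'\subset\aA$ — for instance the full subcategory on the objects $(1),(1,1),(1,1,1),\dotsc$ (all sequences of $1$'s), or more aggressively just the single object $(1)$ — that is homotopy cofinal in the sense needed for the Bousfield–Kan homotopy colimit, and to check that restricting $A\aX$ along it does not change the homotopy colimit. Over $\aA'$ the functor becomes (up to the special-ness equivalences) the functor $\fun\mapsto N\aX(\fpo)^{\times n}$ on a category equivalent to $\aN$ or to $\Gamma^{\op}$-type data, whose homotopy colimit one recognizes as $N\aX(\fpo)$ because the relevant indexing category is contractible and the special structure makes the diagram ``constant up to the diagonal.'' Concretely, I would exhibit, for each $\aon$, a natural zig-zag of weak equivalences $N(A\aX(\aon))\leftarrow\text{(thickened version)}\to N\aX(\fpo)$ compatible with the structure maps, using the unique map $()\to\aon$ and the indicator maps $\fpn_i\to\fpo$, and then invoke the fact that $N\aA$ itself is contractible (it has an initial-ish object, the empty sequence, mapping uniquely to everything — $\aA((),\aon)=*$) to conclude $\hocolim_{\aA}N(A\aX)\simeq N\aX(\fpo)$.

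The main obstacle will be making the ``up to weak equivalence'' arguments strictly functorial enough to feed into a homotopy colimit computation: the special condition gives that $\aX(\fpn)\to\aX(\fpo)^{\times n}$ is a weak equivalence, but it is not an isomorphism, and the maps in $\aA$ that are not coordinate-projections (notably the partitionings and the non-injective maps of finite sets) act on $A\aX$ in a way that only matches the naive ``total cardinality'' picture after applying the special equivalences. I expect the cleanest way around this is to replace $A\aX$ by a pointwise-equivalent functor that is honestly a left Kan extension from the subcategory $\aN\subset\aA$ (sequences of length one, i.e.\ the single finite sets $(n)$), using that $\aX$ restricted there is, up to weak equivalence, determined by $\aX(\fpo)$ via the special structure, and then compute $\hocolim$ over $\aN$ — or, even better, to argue directly with a Quillen Theorem~A / homotopy-cofinality argument showing the comma categories $(1)\!\downarrow\!\aA$-style slices are contractible. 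Either route reduces the theorem to the contractibility of $N\aA$ together with the special condition, both of which are in hand; the bookkeeping of the zig-zag of equivalences across all morphisms of $\aA$ is the part that needs care.
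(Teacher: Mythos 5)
Your reduction via Thomason's lemma and your identification of where specialness enters are both correct, and one of the routes you sketch --- replacing $A\aX$ by a pointwise equivalent functor that depends only on total cardinality and is induced from $\aN$ --- is essentially the paper's argument: the paper sets $B\aX(\aon)=\aX(\fpn)$ for $n=n_1+\dotsb+n_s$ (this is $\epsilon^*\aX$ for the total-sum functor $\epsilon\colon\aA\to\aN$, and it is in fact the left Kan extension of $\aX$ along $\eta\colon\aN\to\aA$, since the partition map $(n)\to\aon$ is terminal among maps into $\aon$ from length-one sequences). The partition maps give a strictly natural transformation $B\aX\to A\aX$ which is an objectwise weak equivalence precisely because $\aX$ is special; this is the clean resolution of the ``strictness'' obstacle you flag, and it is the only place specialness is used.

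However, two of your proposed mechanisms do not work as stated. First, cofinality: no morphism in $\aA$ can decrease the length of a nonempty sequence (the preimage condition forces the entire source to land inside a single block), so the comma category $\aon\downarrow(1)$ is empty whenever $\aon$ has length at least $2$; neither the single object $(1)$ nor the inclusion $\eta\colon\aN\to\aA$ is homotopy final, so no Quillen Theorem A argument of that shape can succeed. The paper instead uses that $\epsilon$ retracts $\eta$ together with the natural transformation $\eta\circ\epsilon\to\Id_{\aA}$ given by the partition maps $(n)\to(n_1,\dotsc,n_s)$; because $B\aX(\eta\epsilon(\aon))=B\aX(\aon)$, this natural transformation covers the identity of $B\aX$ and hence induces a homotopy showing that $\hocolim_{\aA}NB\aX$ and $\hocolim_{\aN}N\aX$ are homotopy equivalent. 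Second, your final collapse onto $N\aX(\fpo)$ is misattributed: contractibility of $N\aA$ (via the initial object $()$) is irrelevant for a non-constant diagram, and the diagram over $\aN$ is not essentially constant --- the fold map $\fun\to\fuo$ induces the sum $\aX(\fpo)^{\times n}\to\aX(\fpo)$, which is not an equivalence. The step that actually closes the argument is that $\fuo$ is \emph{terminal} in $\aN$, so $\hocolim_{\aN}N\aX$ retracts onto the value $N\aX(\fpo)$ over the terminal object. With the Kan-extension branch made precise as above and these two points repaired, your outline becomes the paper's proof.
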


\begin{proof}
Recall that $\aN$ denotes the category with objects $\fun=\{1,\dotsc,n\}$ and
morphisms the maps of sets.  We have an inclusion $\eta \colon \aN\to
\aA$ sending $\fuz$ to $()$ and $\fun$ to $(n)$ for $n>0$.  We
have a functor $\epsilon \colon \aA\to \aN$ sending
$\aon=(n_{1},\dotsc,n_{s})$ to $\fun$ with $n=n_{1}+\dotsb+n_{s}$.
Let $B\aX$ be the functor from $\aA$ to small categories defined by
\[
B\aX(\aon)=\epsilon^{*}\aX(\aon)=\aX(\epsilon(\aon))=\aX(\fpn).
\]
Then the maps $\fpn\to \fpn_{j}$ coming from the partition of $n$ as
$\aon$ induce a natural transformation of functors $B\aX\to A\aX$.
The hypothesis that $\aX$ is special implies that this map is an
objectwise weak equivalence.  Now applying Thomason's lemma, it
suffices to show that the inclusion of $N\aX(\fpo)$ in
$\hocolim_{\aA}NB\aX$ is a weak equivalence.

Since $B\aX=\epsilon^{*}\aX$ as a functor on $\aA$ and
$\aX=\eta^{*}B\aX$ as a functor on $\aN$, we have canonical maps
\begin{equation}\label{eqtechthm}
\hocolim_{\aN}N\aX\to \hocolim_{\aA}NB\aX \to \hocolim_{\aN}N\aX
\end{equation}
induced by $\epsilon$ and $\eta$.
The composite map on $\hocolim_{\aN}N\aX$ is induced by $\epsilon \circ \eta=\Id_{\aN}$,
and is therefore the identity.  The 
composite map on $\hocolim_{\aA}NB\aX$ is induced by $\eta \circ
\epsilon$.  We have a natural transformation from $\eta \circ
\epsilon$ to the identity functor
on $\aA$ induced by the partition maps,
\[
\eta\circ \epsilon (\aon) = (n) \to (n_{1},\dotsc,n_{s})=\aon.
\]
Because
\[
B\aX(\eta \circ \epsilon(\aon))=\aX(\fpn)=B\aX(\aon),
\]
we get a homotopy from the
composite map on $\hocolim_{\aA}NB\aX$ to the identity.  In other
words, we have shown that the maps in~\eqref{eqtechthm} are inverse
homotopy equivalences.  Since $\fuo$ is the final object in $\aN$, the
inclusion of $N\aX(\fpo)$ in $\hocolim_{\aN}N\aX$ is a homotopy
equivalence, and it follows that the inclusion of $N\aX(\fpo)$ in
$\hocolim_{\aA}NB\aX$ is a homotopy equivalence.
\end{proof}

When $\aX=\aK\aC$ for a small symmetric monoidal category
$\aC$, we have the canonical isomorphism $\aK\aC(\fpo)\iso\aC$,
and the composite map 
\[
\aC\iso \aK\aC(\fpo)\to \aP\aK\aC\to \aC
\]
is the identity on $\aC$.  Since $\aK\aC$ is always a special $\Gamma$-category, we get the following corollary.

\begin{cor}\label{corPK}
The natural map $\alpha \colon \aP\aK\aC\to \aC$ is always a weak equivalence.
\end{cor}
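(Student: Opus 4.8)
The plan is to derive this immediately from Theorem~\ref{thmtech} and Proposition~\ref{propKspec}. By Proposition~\ref{propKspec}, the $\Gamma$-category $\aK\aC$ is special, so Theorem~\ref{thmtech} applies with $\aX=\aK\aC$ and shows that the inclusion of $\aK\aC(\fpo)$ in $\aP\aK\aC$ --- that is, the inclusion of the summand of $\aA\groth A\aK\aC$ indexed by the object $(1)$ of $\aA$ --- is a weak equivalence. Precomposing with the isomorphism $q_{1}\colon \aC\iso \aK\aC(\fpo)$ of Proposition~\ref{propKspec} then produces a weak equivalence $j\colon \aC\to \aP\aK\aC$, and it remains only to check that $\alpha\circ j=\id_{\aC}$; the two-out-of-three property for weak equivalences of categories (valid because these are detected on nerves) will then force $\alpha$ itself to be a weak equivalence.

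I would verify $\alpha\circ j=\id_{\aC}$ by unwinding the definition of $\alpha$ on the $(1)$-summand. On objects, $\alpha$ sends $X_{1}=(x^{1}_{I},f_{I,J})$ in $A\aK\aC((1))=\aK\aC(\fpo)$ to $\alpha_{(1)}(X_{1})=x^{1}_{\{1\}}$, which is exactly $p_{1}(X_{1})$; a morphism $g$ of $\aK\aC(\fpo)$ corresponds to the morphism $(\id_{(1)},g)$ of $\aP\aK\aC$, which $\alpha$ sends to $\alpha_{(1)}(g)\circ \alpha_{\id_{(1)}}$. The one point needing attention is that $\alpha_{\id_{(1)}}$ is the identity natural transformation: since $\id_{(1)}$ carries the single set $\fuo$ of its source into the single set $\fuo$ of its target, the structure map appearing in its definition is the empty composite $x^{1}_{\{1\}}\to x^{1}_{\{1\}}$, i.e.\ the identity. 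Hence $\alpha$ restricted to the $(1)$-summand equals $p_{1}$, so $\alpha\circ j=p_{1}\circ q_{1}=\id_{\aC}$, as claimed.

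Essentially all the content of the argument resides in Theorem~\ref{thmtech}; the remaining work here is the bookkeeping of the previous paragraph, matching $\alpha$ on the $(1)$-summand with the projection $p_{1}$ and confirming $\alpha_{\id_{(1)}}=\id$. I do not expect any genuine obstacle --- only the need to keep straight which of $p_{1}$, $q_{1}$ is used where.
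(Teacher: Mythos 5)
Your argument is exactly the paper's: $\aK\aC$ is special (Proposition~\ref{propKspec}), so Theorem~\ref{thmtech} makes the inclusion $\aK\aC(\fpo)\to\aP\aK\aC$ a weak equivalence, and the composite $\aC\iso\aK\aC(\fpo)\to\aP\aK\aC\to\aC$ is the identity, whence two-out-of-three gives the result. Your bookkeeping identifying $\alpha$ on the $(1)$-summand with $p_{1}$ is correct and merely spells out what the paper asserts without proof.
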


We also get a comparison for $\aK\aP\aX$ when
$\aX$ is special.

\begin{cor}\label{corKP}
If $\aX$ is special, then $\upsilon \colon \aW\aX\to \aK\aP\aX$ is a
weak equivalence.
\end{cor}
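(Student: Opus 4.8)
The plan is to deduce the corollary from Theorem~\ref{thmtech} by comparing both $\aW\aX$ and $\aK\aP\aX$ to $\aX(\fpo)$ through the natural transformations we already have in hand. Recall from Theorem~\ref{thmW} that $\omega\colon \aW\aX\to\aX$ is always a weak equivalence of $\Gamma$-categories, so for each $\fpn$ the map $\aW\aX(\fpn)\to\aX(\fpn)$ is a weak equivalence; in particular $\aW\aX(\fpo)\to\aX(\fpo)$ is a weak equivalence. Since $\aX$ is special, so is $N\aX$ (the $\Gamma$-space), hence so is $\aW\aX$ via $\omega$, and likewise $\aK\aP\aX$ is special for formal reasons (Proposition~\ref{propKspec}, as $\aP\aX$ is a symmetric monoidal category). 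For a map of special $\Gamma$-categories, being an objectwise weak equivalence is equivalent to being a weak equivalence at level $\fpo$; so it suffices to prove that $\upsilon$ induces a weak equivalence $\aW\aX(\fpo)\to\aK\aP\aX(\fpo)=\aK\aP\aX(\fpo)$.

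First I would identify $\aK\aP\aX(\fpo)$ with $\aP\aX$ via the canonical isomorphism $\aK\aD(\fpo)\iso\aD$ of Proposition~\ref{propKspec} applied to $\aD=\aP\aX$. Under this identification, the composite $\aX(\fpo)\to\aW\aX(\fpo)\xrightarrow{\upsilon}\aK\aP\aX(\fpo)\iso\aP\aX$ — where the first map is the right adjoint section $x\mapsto(\iota x,x,\id_{\iota x})$ from Theorem~\ref{thmW} — is exactly the canonical inclusion $\aX(\fpo)\hookrightarrow\aP\aX$ (the inclusion of $A\aX(1)$ as a component of the Grothendieck construction), since $\iota x$ at level $\fpo$ has its single nontrivial structure object $x_{\{1\}}=x$. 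By Theorem~\ref{thmtech}, that inclusion is a weak equivalence. Combined with the fact (Theorem~\ref{thmW}) that $\aX(\fpo)\to\aW\aX(\fpo)$ is a weak equivalence, the two-out-of-three property gives that $\aW\aX(\fpo)\to\aK\aP\aX(\fpo)$ is a weak equivalence. Then specialness of both source and target upgrades this to an objectwise weak equivalence, which is what we want.

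The main obstacle I expect is the bookkeeping in the second step: verifying that under the identification $\aK\aP\aX(\fpo)\iso\aP\aX$ the composite $\aX(\fpo)\to\aW\aX(\fpo)\to\aK\aP\aX(\fpo)$ really is the inclusion of Theorem~\ref{thmtech}, and not some other functor landing in $\aP\aX$. This is a direct chase through Definition~\ref{defW} of $\iota$ at $n=1$ (where $\pi^{\{1\}}\colon\fpo\to\fpo$ is the identity) together with the explicit form of the adjunction isomorphism $\aK\aD(\fpo)\iso\aD$ in Proposition~\ref{propKspec}; it is routine but must be done carefully because both $\iota$ and $q_{1}$ are defined by somewhat intricate formulas. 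The appeal to ``specialness converts level-$\fpo$ weak equivalences to objectwise ones'' is standard: for a map $f$ of special $\Gamma$-categories, the square relating $\aX(\fpn)$ to $\aX(\fpo)^{\times n}$ (via the indicator maps, which is a weak equivalence by specialness) is natural, so a weak equivalence at $\fpo$ forces one at every $\fpn$.
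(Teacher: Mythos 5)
Your proposal is correct and follows essentially the same route as the paper's own proof: restrict to the $\fpo$-level, identify the composite $\aX(\fpo)\to\aW\aX(\fpo)\to\aK\aP\aX(\fpo)=\aP\aX$ with the inclusion of Theorem~\ref{thmtech}, apply two-out-of-three, and then use specialness of $\aW\aX$ (via $\omega$) and of $\aK\aP\aX$ to upgrade the level-$\fpo$ equivalence to an objectwise one. The bookkeeping step you flag (that the composite really is the canonical inclusion) is exactly the observation the paper relies on, stated there without further comment.
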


\begin{proof}
Let $\aX$ be a special $\Gamma$-category.
The restriction of the map $\omega$ to the $\fpo$-cat\-e\-go\-ries,
$\aW\aX(\fpo)\to\aX(\fpo)$, is an equivalence of categories, and
the composite map 
\[
\aX(\fpo)\to \aW\aX(\fpo)\to \aK\aP\aX(\fpo)=\aP\aX
\]
is the map in the theorem, and therefore a weak equivalence.  It
follows that the restriction of $\upsilon$ to the $\fpo$-categories is a
weak equivalence. Since the map $\omega \colon \aW\aX\to \aX$ is a
weak equivalence, $\aW\aX$ is also a special $\Gamma$-category, and it
follows that $\upsilon$ is a weak equivalence.
\end{proof}

Together with Proposition~\ref{propNsimp} and Theorem~\ref{thmW},
Corollaries~\ref{corPK} and~\ref{corKP} prove Theorem~\ref{thmuncomp}.
To prove Theorem~\ref{maincat}, we need to see that the map $\upsilon$
is always a stable equivalence.  For this we use the following
technical lemma.

\begin{lem}\label{lemtechlem}
The diagram
\[
\xymatrix{%
\aW\aK\aP\aX\ar[r]^{\upsilon}\ar[dr]_{\omega}
&\aK\aP\aK\aP\aX\ar[d]_{\aK\alpha}
&\aK\aP\aW\aX\ar[l]_{\aK\aP\upsilon}\ar[dl]^{\aK\aP\omega}\\
&\aK\aP\aX
}
\]
commutes up to natural transformation of maps of $\Gamma$-categories. 
All maps in the diagram are weak equivalences.
\end{lem}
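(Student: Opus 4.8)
The plan is to write the two required natural transformations down by hand, the one substantive input being a ``triangle identity'' saying that $\aK\alpha$ retracts the canonical (non-$\Gamma$-equivariant) inclusion $\iota$, and then to read off the weak-equivalence assertions from results already proved together with two-out-of-three.

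First I would establish $\aK\alpha\circ\iota_{\aK\aP\aX}=\Id$ on $\aK\aP\aX$ (and, identically, $\aK\alpha_{\aP\aZ}\circ\iota_{\aK\aP\aZ}=\Id$ for any $\Gamma$-category $\aZ$), where $\iota_{\aK\aP\aZ}$ is the levelwise functor of Section~\ref{secconsp}. Because $\aP\aZ$ is permutative, $\alpha\colon\aP\aK\aP\aZ\to\aP\aZ$ is a \emph{strict} map of permutative categories (Theorem~\ref{thmPK}(iii)), so $\aK\alpha$ really is a map of $\Gamma$-categories. For $x=(x_I,f_{I,J})$ in $\aK\aP\aZ(\fpn)$, the $I$-component of $\iota_{\aK\aP\aZ}x$ is $\pi^I_*x$ sitting in the evident length-one summand of $\aP\aK\aP\aZ$, and $\alpha$ sends it to the top component of $\pi^I_*x$, which by the definition of the $\Gamma$-structure on $\aK$ is $x_I$; a parallel check on the structure maps --- using that $\aP\aZ$ is permutative, so parenthesization is irrelevant, and that the map in $\aA$ underlying the $(I,J)$-structure map of $\iota_{\aK\aP\aZ}x$ is precisely a partition map --- identifies $\aK\alpha(\iota_{\aK\aP\aZ}x)$ with $x$ on the nose, and likewise on morphisms.

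The \emph{left} triangle then follows at once: an object of $\aW(\aK\aP\aX)(\fpn)$ is a triple $(y,x,g)$ with $g\colon y\to\iota_{\aK\aP\aX}x$ in $\aK\aP\aK\aP\aX(\fpn)$, and I take $\aK\alpha\circ\upsilon\Rightarrow\omega$ to have component $\aK\alpha(g)\colon\aK\alpha(y)\to\aK\alpha(\iota_{\aK\aP\aX}x)=x$ at $(y,x,g)$, with naturality coming from the defining square of a morphism of $\aW$ together with the identity above. For the \emph{right} triangle only the top-component instance $(\iota_\aX x)_{\fun}=\pi^{\fun}_*x=x$ is needed: an object of $\aP\aW\aX$ in the summand for $\aon=(n_1,\dots,n_s)$ is a tuple $\big((y_j,x_j,g_j)\big)_j$ with $y_j=(y^j_I,f^j_{I,J})$, which $\alpha\circ\aP\upsilon$ sends to $y^1_{\fun_1}\boxbin\dots\boxbin y^s_{\fun_s}$ and $\aP\omega$ to $x_1\boxbin\dots\boxbin x_s$, and $g^1_{\fun_1}\boxbin\dots\boxbin g^s_{\fun_s}$ is a natural transformation between them. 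It respects concatenation and is the identity on the unit, hence is an op-lax natural transformation of strictly unital op-lax maps, so $\aK$ carries it to a natural transformation $\aK\alpha\circ\aK\aP\upsilon\Rightarrow\aK\aP\omega$ of maps of $\Gamma$-categories.

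It remains to see the maps are weak equivalences. $\omega\colon\aW\aK\aP\aX\to\aK\aP\aX$ is one by Theorem~\ref{thmW}; $\alpha\colon\aP\aK\aP\aX\to\aP\aX$ is one by Corollary~\ref{corPK}, and $\aK$ preserves weak equivalences (Section~\ref{secrevk}), so $\aK\alpha$ is one; $\omega\colon\aW\aX\to\aX$ is one by Theorem~\ref{thmW}, and $\aP$ and $\aK$ preserve weak equivalences, so $\aK\aP\omega$ is one; and $\aK\aP\aX$ is a special $\Gamma$-category, being $\aK$ of a symmetric monoidal category (Proposition~\ref{propKspec}), so Corollary~\ref{corKP} applied to it shows $\upsilon\colon\aW\aK\aP\aX\to\aK\aP\aK\aP\aX$ is one. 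Finally, since the right triangle commutes up to natural transformation, hence up to objectwise homotopy of nerves, two-out-of-three applied to $\aK\alpha\circ\aK\aP\upsilon\simeq\aK\aP\omega$ gives that $\aK\aP\upsilon$ is a weak equivalence too. I expect the only genuinely delicate point to be the verification that the levelwise natural transformations above are compatible with the $\Gamma$-structures, which comes down to checking that $\aK\alpha$ annihilates the correction maps $\omega_\phi$ of Section~\ref{secconsp} --- routine but fiddly bookkeeping with the reindexing maps $\pi^I$ and the partition maps.
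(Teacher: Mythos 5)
Your proposal is correct and follows essentially the same route as the paper: reduce the right triangle to a strictly monoidal natural transformation at the level of $\aP$, build the left triangle's transformation from the component $\aK\alpha(g)\colon\aK\alpha(y)\to x$ using $\aK\alpha\circ\iota=\Id$, and deduce the weak-equivalence claims from Theorem~\ref{thmW}, Corollary~\ref{corPK}, preservation of weak equivalences by $\aK$ and $\aP$, and two-out-of-three. The only (immaterial) difference is that you obtain $\upsilon\colon\aW\aK\aP\aX\to\aK\aP\aK\aP\aX$ from Corollary~\ref{corKP} applied to the special $\Gamma$-category $\aK\aP\aX$, whereas the paper gets it from the left triangle by two-out-of-three.
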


\begin{proof}
The weak equivalence statement follows from the diagram statement
since $\alpha$ and $\omega$ are always weak equivalences and $\aK$ and
$\aP$ preserve weak equivalences.  For the diagram statement, it
suffices to show that the diagrams 
\[
\xymatrix{%
\aW\aK\aC\ar[r]^{\upsilon}\ar[dr]_{\omega}
&\aK\aP\aK\aC\ar[d]^{\aK\alpha}
&\aP\aK\aP\aX\ar[d]_{\alpha}
&\aP\aW\aX\ar[l]_{\aP\upsilon}\ar[dl]^{P\omega}\\
&\aK\aC&\aP\aX
}
\]
commute up to natural transformation of maps of $\Gamma$-categories (on
the left) for all $\aC$ and up to op-lax natural transformation (on
the right) for all $\aX$. 

On the left, starting with an object $(y,x,g)$ in $\aW\aK\aC(\fpn)$,
the top left composite takes this to $\aK\alpha(y)$ and the diagonal arrow
takes this to $x$; the effect on maps in $\aW\aK\aC(\fpn)$ admits the
analogous description.  Since $\aK\alpha(\iota x)=x$, $\aK\alpha(g)$
is a map from $\aK\alpha(y)$ to $x$, which is natural in
$\aW\aK\aC(\fpn)$, and compatible with the $\Gamma$-structure.

On the right, consider an element $X=(X_{1},\dotsc,X_{s})$ in
$A\aW\aX(\aon)$, where $X_{i}=(y^{i},x^{i},g^{i})$ is an object in
$\aW\aX(\fpn_{i})$.  As per the remark following
Definition~\ref{defW}, we can write $y^{i}=(y^{i}_{I},f_{I,J})$ for
$y^{i}_{I}$ some object of $\aX(\fpm_{I})$ (thought of as
$A\aX(m)$ or $A\aX()$) for some $m_{I}$, where $I$ ranges over the
subsets of $\fun_{i}$.  The left down composite sends $X$ to 
\[
\alpha(y^{1},\dotsc,y^{s})=(y^{1}_{\fum_{\fun_{1}}},\dotsc,y^{s}_{\fum_{\fun_{s}}})
\]
since the symmetric monoidal product in $\aP\aX$ is concatenation.  An
analogous description applies to maps of $X$ in $\aP\aW\aX$.
The diagonal in the diagram sends $X$ to $(x^{1},\dotsc,x^{s})$ and we
have the map
\[
(g^{i}_{\fum_{\fun_{i}}})\colon (y^{i}_{\fum_{\fun_{i}}})\to 
(\iota x^{i}_{\fum_{\fun_{i}}})=(x^{i}).
\]
in $\aP\aX$.  This map is natural in $X$ in $\aP\aW\aX$ and is a strictly
monoidal natural transformation.
\end{proof}

\begin{proof}[Proof of Theorem~\ref{maincat}]
Given Propositions~\ref{propNsimp} and~\ref{propNsimpS},
Theorem~\ref{thmW}, and Corollaries~\ref{corPK} and~\ref{corKP}, it
suffices to show that the map $\upsilon \colon \aW\aX\to \aK\aP\aX$ is
always a stable equivalence.  Writing $[-,-]$ for maps in the homotopy
category obtained by formally inverting the weak equivalences, we need
to show that
\[
\upsilon^{*}\colon [\aK\aP\aX,\aZ]\to [\aW\aX,\aZ]
\]
is a bijection for every very special $\Gamma$-category $\aZ$.  Since
$\aK$ and $\aP$ preserve weak equivalences, they induce functors 
on the homotopy category.  Using this and the fact that $\upsilon$ is
a weak equivalence for a special $\Gamma$-category, we get a map
\[
R\colon [\aW\aX,\aZ]\to [\aK\aP\aX,\aZ]
\]
as follows: Given $f$ in $[\aW\aX,\aZ]$, the map
$Rf$ in $[\aK\aP\aX,\aZ]$ is the composite
\[
\xymatrix{%
\aK\aP\aX\ar[r]^{\aK\aP\omega^{-1}}
&\aK\aP\aW\aX\ar[r]^{\aK\aP f}
&\aK\aP\aZ\ar[r]^{\upsilon^{-1}}
&\aW\aZ\ar[r]^{\omega}&\aZ.
}
\]

To see that the composite map on $[\aW\aX,\aZ]$ is the identity,
consider the following diagram,
\[
\xymatrix{%
\aX&\aW\aX\ar[l]_{\omega}^{\sim}\ar[r]^{f}
&\aZ\\
\aW\aX\ar[u]^{\omega}_{\sim}\ar[d]_{\upsilon}
&\aW\aW\aX\ar[l]_{\aW\omega}^{\sim}\ar[u]^{\omega}_{\sim}
\ar[r]^{\aW f}\ar[d]_{\upsilon}
&\aW\aZ\ar[u]^{\omega}_{\sim}\ar[d]_{\upsilon}^{\sim}\\
\aK\aP\aX&\aK\aP\aW\aX\ar[l]^{\aK\aP\omega }_{\sim}
\ar[r]_{\aK\aP f}&\aK\aP\aZ
}
\]
which commutes by naturality.  We see that $\aW\omega$ is a weak
equivalence (as marked) by the two-out-of-three property since
$\omega$ is always a weak equivalence.
The map $R(f)\circ \upsilon$ is the composite map in the homotopy
category of the part of this diagram starting from the copy
of $\aW\aX$ in the first column and traversing maps and inverse maps
to $\aZ$ by going down, right 
twice, and then up twice; this agrees with the composite map in the
homotopy category obtained
by going up and then right twice, $f\circ \omega^{-1} \circ
\omega=f$. 

On the other hand, starting with $g$ in $[\aK\aP\aX,\aZ]$, then
\[
R(g\circ \upsilon)= 
\omega\circ \upsilon^{-1}\circ 
\aK\aP(g\circ \upsilon\circ \omega^{-1}).
\]
The solid arrow part of the diagram
\[
\xymatrix{%
\aK\aP\aX
&\aW\aK\aP\aX\ar[r]^{\aW g}\ar[d]^{\upsilon}_{\sim}\ar[l]_{\omega}^{\sim}
&\aW\aZ\ar[d]^{\upsilon}_{\sim}\ar[r]^{\omega}_{\sim}&\aZ\\
\aK\aP\aW\aX\ar[r]_{\aK\aP \upsilon}\ar[u]^{\aK\aP\omega}
&\aK\aP\aK\aP\aX\ar[r]_{\aK\aP g}\ar@{..>}[ul]^{\aK\alpha}
&\aK\aP\aZ
}
\]
commutes and Lemma~\ref{lemtechlem} implies that the whole diagram
commutes in the homotopy category.  By naturality of $\omega$, the
composite 
$\omega\circ \aW g\circ \omega^{-1}$ is $g$, and it follows that 
$R(g\circ \upsilon)$ is $g$.
\end{proof}

\section{Special $\Gamma$-spaces and non-completed $E_{\infty}$ spaces}
\label{secss}

This section explores the analogue in simplicial sets of the
construction of $\aP$ in small categories, which provides a functor
$E$ from $\Gamma$-spaces to $E_{\infty}$ spaces over the Barratt-Eccles
operad.  This section is entirely independent from the rest of the
paper and we have written it to be as self-contained as possible
without being overly repetitious.  We assume familiarity with
$\Gamma$-spaces, but not with $\Gamma$-categories or permutative
categories (except where we compare $E$ and $\aP$ in
Proposition~\ref{propcomp}). 

Definition~\ref{defA} describes a category $\aA$ whose objects are the
sequences of positive integers (including the empty sequence).
We think of a positive integer as a finite (unbased) set, and maps
between sequences $\aom=(m_{1},\dotsc,m_{r})$ and
$\aon=(n_{1},\dotsc,n_{s})$ are generated by permuting elements in
the sequence, maps of finite sets, and partitioning finite sets.  For
a $\Gamma$-space $X$, let $AX$ be the functor from $\aA$ to based
simplicial sets with 
\[
AX(\aon)=X(\fpn_{1})\times \dotsb \times X(\fpn_{s})
\]
for $s>0$ and $AX()=*$.  In terms of the maps in $\aA$, a permutation
of sequences induces the corresponding permutation of factors; a map
of finite unbased sets $\phi \colon \fun\to\fup$ induces the
corresponding map $X(\phi)$ (for the corresponding $\phi\colon
\fpn\to\fpp$); a partition $\fun=\fup_{1}\amalg \dotsb \amalg \fup_{t}$
induces the map
\[
X(\fpn)\to X(\fpp_{1})\times \dotsb \times X(\fpp_{t})
\]
induced by the maps $\fpn\to \fpp_{i}$ that pick out the elements of
the subset $\fpp_{i}$ and send all the other 
elements to the basepoint.  We consider the homotopy
colimit. 

\begin{defn}
Let $EX=\hocolim_{\aA} AX$.
\end{defn}

It is clear from the definition that $E$ preserves weak equivalences.
The proof of the remainder of the following theorem is identical to
the proof of Theorem~\ref{thmtech}.

\begin{thm}\label{thmtechspace}
$E$ preserves weak equivalences.  If $X$ is a special $\Gamma$-space,
then the inclusion of $X(\fpo)$ in $EX$ is a weak equivalence.
\end{thm}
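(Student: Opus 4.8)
The plan is to follow the proof of Theorem~\ref{thmtech} line for line, replacing $\Gamma$-categories by $\Gamma$-spaces and nerves of small categories by the simplicial sets themselves. Since $EX$ is defined directly as $\hocolim_{\aA}AX$, there is no Grothendieck construction to pass through, so I would not need Thomason's lemma (Lemma~\ref{lemThomason}); the only general facts I would invoke are that the homotopy colimit of a diagram of simplicial sets carries objectwise weak equivalences to weak equivalences, that a natural transformation of such diagrams induces a homotopy between the induced maps on homotopy colimits, and that the homotopy colimit over a category with a terminal object is canonically equivalent to the value there. The first sentence of the theorem is then immediate, as already observed: a weak equivalence $X\to Y$ induces an objectwise weak equivalence $AX\to AY$ of $\aA$-diagrams, hence a weak equivalence on homotopy colimits.

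For the second assertion, I would introduce $\aN$, the inclusion $\eta\colon\aN\to\aA$ sending $\fuz$ to $()$ and $\fun$ to $(n)$ for $n>0$, and the functor $\epsilon\colon\aA\to\aN$ sending $\aon=(n_{1},\dotsc,n_{s})$ to $\fun$ with $n=n_{1}+\dotsb+n_{s}$, exactly as in the proof of Theorem~\ref{thmtech}. Define the $\aA$-diagram $BX$ by $BX(\aon)=X(\epsilon(\aon))=X(\fpn)$. The partition maps $\fpn\to\fpn_{j}$ assemble into a natural transformation $BX\to AX$, which is an objectwise weak equivalence precisely because $X$ is special; hence $\hocolim_{\aA}BX\to\hocolim_{\aA}AX=EX$ is a weak equivalence, and it suffices to show that the inclusion of $X(\fpo)$ in $\hocolim_{\aA}BX$ is a weak equivalence.

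Next, using $BX=\epsilon^{*}X$ on $\aA$ and $X=\eta^{*}BX$ on $\aN$, the functors $\epsilon$ and $\eta$ induce maps
\[
\hocolim_{\aN}X\to\hocolim_{\aA}BX\to\hocolim_{\aN}X.
\]
The composite on $\hocolim_{\aN}X$ is induced by $\epsilon\circ\eta=\Id_{\aN}$ and so is the identity; the composite on $\hocolim_{\aA}BX$ is induced by $\eta\circ\epsilon$, and the partition maps $(n)\to(n_{1},\dotsc,n_{s})$ give a natural transformation $\eta\circ\epsilon\to\Id_{\aA}$. Since $BX(\eta\circ\epsilon(\aon))=X(\fpn)=BX(\aon)$, this natural transformation induces a homotopy from the composite on $\hocolim_{\aA}BX$ to the identity, so the two displayed maps are inverse homotopy equivalences. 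Finally $\fuo$ is terminal in $\aN$, so the inclusion $X(\fpo)\to\hocolim_{\aN}X$ is a homotopy equivalence, and chasing the inclusion $X(\fpo)=AX(1)$ through the equivalences shows that the natural map $X(\fpo)\to EX$ agrees with it up to the weak equivalences above.

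I do not expect a genuine obstacle: the argument is essentially pure bookkeeping, and the hard part will only be making sure the standard homotopy-invariance properties of $\hocolim$ are quoted correctly and that the final zig-zag really recovers the canonical map $X(\fpo)\to EX$ induced by $X(\fpo)=AX(1)$ rather than some other map into the homotopy colimit --- the exact analogue of the closing sentence of the proof of Theorem~\ref{thmtech}.
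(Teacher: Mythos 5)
Your proposal is correct and is exactly the paper's intended argument: the paper simply remarks that $E$ preserving weak equivalences is clear from the definition and that the rest of the proof is identical to that of Theorem~\ref{thmtech}, which is precisely the $\eta$/$\epsilon$/$BX$ argument you transcribe to simplicial sets (with Thomason's lemma correctly omitted since $EX$ is already a homotopy colimit).
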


Recall that the Barratt-Eccles operad $\aE$ has as its $n$-th
simplicial set $\aE(n)=NT\Sigma_{n}$ the nerve of the translation
category on the $n$-th symmetric group $\Sigma_{n}$, with operadic
multiplication induced by block sum of permutations.  For any
permutation $\sigma$ in $\Sigma_{n}$, we have a functor
\[
\sigma\colon \aA^{\times n}\to \aA
\]
induced by permutation and concatenation:
\[
\sigma(\aom^{1},\dotsc,\aom^{n})=
(m^{\sigma 1}_{1},\dotsc,m^{\sigma 1}_{r_{\sigma 1}},
m^{\sigma 2}_{1},\dotsc,m^{\sigma n}_{r_{\sigma n}}).
\]
Permutation induces a natural transformation
\[
AX^{\times n}\to AX
\]
covering $\sigma$; we therefore get an induced map on homotopy
colimits 
\[
\sigma_{*}\colon (EX)^{\times n}\to EX.
\]
For any other element $\sigma'\in \Sigma_{n}$, the permutation
$\sigma'\sigma^{-1}$ induces a natural transformation between 
functors 
\[
\sigma,\sigma'\colon \aA^{\times n}\to \aA,
\]
compatible with the natural transformations $AX^{\times n}\to AX$
covering them. These fit together to induce a map 
\begin{equation}\label{eqopact}
\aE(n)\times EX^{\times n}
\iso NT\Sigma_{n}\times \hocolim_{\aA^{\times n}}AX^{\times n}
\to EX.
\end{equation}
An easy check of the definitions proves the following proposition, a
restatement of Theorem~\ref{maineinfty}.

\begin{prop}
The maps~\eqref{eqopact} define an action of the operad $\aE$ on the
simplicial set $EX$.  This action is natural in maps of the $\Gamma$-space $X$.  Thus, $E$ defines a functor from $\Gamma$-spaces to
$E_{\infty}$ spaces over $\aE$.
\end{prop}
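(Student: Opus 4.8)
The plan is to verify directly that the maps in~\eqref{eqopact} satisfy the operad action axioms: unit, associativity (compatibility with operadic composition $\gamma$), and equivariance (compatibility with the symmetric group actions on $\aE(n)$ and on $(EX)^{\times n}$). Since everything in sight is built from homotopy colimits over products of copies of $\aA$, the strategy is to reduce each axiom to a corresponding statement about the functors $\sigma\colon \aA^{\times n}\to\aA$, the natural transformations $AX^{\times n}\to AX$ covering them, and the natural transformations between these induced by permutations $\sigma'\sigma^{-1}$; then invoke that $\hocolim$ is a functor that takes such natural-transformation data to the expected maps and homotopies on homotopy colimits.

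First I would check the unit axiom: the identity $\id\in\Sigma_1=\aE(1)_0$ induces the identity functor $\aA\to\aA$ and hence the identity on $EX$, so the action of $\aE(1)$ restricts correctly. Next I would treat equivariance. On the operad side, right multiplication by $\tau\in\Sigma_n$ sends $\sigma\mapsto\sigma\tau$, and one checks at the level of functors $\aA^{\times n}\to\aA$ that $(\sigma\tau)(\aom^1,\dots,\aom^n)=\sigma(\aom^{\tau 1},\dots,\aom^{\tau n})$, i.e.\ precomposing the concatenation functor with the permutation-of-factors functor $\tau^*\colon\aA^{\times n}\to\aA^{\times n}$; the covering natural transformations match, so on homotopy colimits the square relating $\sigma_*$, $(\sigma\tau)_*$, and the permutation action of $\tau$ on $(EX)^{\times n}$ commutes. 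The block-permutation (left $\Sigma_n$) equivariance for $\gamma$ is similar bookkeeping. Then for associativity I would fix $\sigma\in\Sigma_n$ and $\rho_i\in\Sigma_{k_i}$, form $\gamma(\sigma;\rho_1,\dots,\rho_n)\in\Sigma_{k_1+\dots+k_n}$ (block sum followed by permutation), and observe that the induced functor $\aA^{\times(k_1+\dots+k_n)}\to\aA$ is literally the composite of first applying the $\rho_i$ on the respective blocks of factors (giving $\aA^{\times n}$) and then applying $\sigma$ — because concatenation is strictly associative. The covering transformations $AX^{\times k}\to AX$ compose correspondingly, so on homotopy colimits the associativity pentagon of the operad action commutes strictly (not merely up to homotopy), using functoriality of $\hocolim$ and the fact that $\aE$'s operadic multiplication is itself induced by block sum of permutations and nerves of translation categories, matching the combinatorics on $\aA$.

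The main obstacle, such as it is, is mostly organizational: keeping straight the bijection between ``$q$-simplices of $\aE(n)$'' (which are $(q{+}1)$-tuples of elements of $\Sigma_n$, or equivalently a single element together with $q$ ``transition'' permutations $\sigma'\sigma^{-1}$) and the data of a $q$-simplex in the homotopy colimit together with the natural transformations between the functors $\sigma\colon\aA^{\times n}\to\aA$ indexed by those transitions. Once one sets up the convention that a simplex of $\aE(n)\times EX^{\times n}$ is recorded as $(\sigma_0,\dots,\sigma_q)$ together with a compatible simplex of $\hocolim_{\aA^{\times n}}AX^{\times n}$, the map~\eqref{eqopact} is the evident one sending it to the simplex of $EX=\hocolim_\aA AX$ obtained by pushing forward along $\sigma_0$ and threading the transition natural transformations through the simplicial structure, exactly as in the description of Thomason's map in Lemma~\ref{lemThomason}; all three axioms then follow from the corresponding identities among functors $\aA^{\times n}\to\aA$. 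This is the ``easy check'' referred to in the statement, and no homotopical input beyond functoriality of $\hocolim$ is needed — the verification is purely combinatorial.
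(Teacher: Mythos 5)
Your proposal is correct and follows essentially the same route as the paper, which offers no written argument beyond the remark that ``an easy check of the definitions'' suffices; your elaboration (unit, equivariance, and strict associativity all reducing to identities among the permutation-and-concatenation functors $\aA^{\times n}\to\aA$ and their covering natural transformations, plus functoriality of $\hocolim$ and the product decomposition $\hocolim_{\aA^{\times n}}AX^{\times n}\iso (EX)^{\times n}$) is exactly the intended check. The only caveats are bookkeeping ones you already flag: fixing left/right conventions so that composing the permutation-of-factors functor with $\sigma$ yields the correct side of the $\Sigma_n$-action, and the (standard) Fubini isomorphism for homotopy colimits over product categories, which the paper also uses implicitly elsewhere.
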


To compare the functor $E$ with the functor $\aP$, recall that the
nerve of a permutative category has the natural structure of an $\aE$
space with the map $\sigma_{*} \colon N\aC^{\times n}\to N\aC$ (for
$\sigma$ in $\Sigma_{n}$) induced by the permutation and the
permutative product.  In Section~\ref{secpf}, we reviewed the map from
the homotopy colimit of the nerve to the nerve of the Grothendieck
construction, which we can now interpret as a natural transformation
$EN\to N\aP$.  The following proposition is clear from explicit
description of the map in that section. 

\begin{prop}\label{propcomp}
For a $\Gamma$-category $\aX$, the canonical map $EN\aX\to N\aP\aX$ is a
map of $\aE$ spaces and a weak equivalence.
\end{prop}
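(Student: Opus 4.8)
The plan is to verify the two claims in Proposition~\ref{propcomp} — that the canonical map $EN\aX\to N\aP\aX$ is a map of $\aE$-spaces and that it is a weak equivalence — using the already-established explicit description of the Thomason natural transformation from Lemma~\ref{lemThomason} and the fact that both sides have their $\aE$-structures induced by permutation and concatenation of sequences in $\aA$.

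First I would recall that by Lemma~\ref{lemThomason} (applied to the functor $F = N\aX$ on $\aA$, noting $NA\aX(\aon) = N(\aX(\fpn_1)\times\dotsb\times\aX(\fpn_s)) = N\aX(\fpn_1)\times\dotsb\times N\aX(\fpn_s) = (AN\aX)(\aon)$, so that $\hocolim_{\aA} N A\aX \cong \hocolim_{\aA}A(N\aX) = EN\aX$) there is a natural weak equivalence $EN\aX \to N(\aA\groth A\aX) = N\aP\aX$. This already gives that the map is a weak equivalence, so the only remaining task is naturality with respect to the operad action. For the $\aE$-space claim I would unwind both actions: on $EN\aX$, the map $\sigma_*\colon (EN\aX)^{\times n}\to EN\aX$ is induced (as in~\eqref{eqopact}) by the functor $\sigma\colon \aA^{\times n}\to\aA$ given by permuting and concatenating sequences, together with the covering natural transformation $(AN\aX)^{\times n}\to AN\aX$; on $N\aP\aX$, the permutative product is precisely concatenation of sequences (with the symmetry induced by permutation), so $\sigma_*\colon (N\aP\aX)^{\times n}\to N\aP\aX$ is induced by exactly the same combinatorics on the objects of $\aA$. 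The verification then amounts to checking that the explicit simplex-level formula for the Thomason map (the one recalled in Section~\ref{secpf}, sending a $q$-simplex of $\hocolim$ consisting of $q$ composable maps $\phi_k$ in $\aA$ and $q$ composable maps $f_k$ in $F(\aon_0)$ to the simplex $(\aon_k, x_k')$ with $x_k' = F(\phi_{k,\dotsc,1})(x_k)$) commutes with applying $\sigma$ to the underlying sequences and the covering permutation to the $F$-values; this is a direct diagram chase using that $\sigma$, as an endofunctor built from concatenation and reindexing, is compatible with composition in $\aA$ and with the structure maps of $A\aX$.

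The main obstacle — though a mild one — is bookkeeping: one must track how a permutation $\sigma\in\Sigma_n$ acting by block-permutation of $n$ sequences interacts with the reindexing implicit in the Thomason formula (in particular with the composites $\phi_{k,\dotsc,1}$ and $F(\phi_{k-1,\dotsc,1})(f_k)$), and confirm that the two ways of reaching $N\aP\aX$ — first concatenate-then-apply-Thomason versus first-apply-Thomason-then-multiply — agree on the nose at the level of simplices, not merely up to homotopy. Because the permutative product on $\aP\aX$ is strictly concatenation and the operad action on $EN\aX$ is built from the very same concatenation functors on $\aA$, I expect this to be an equality of simplicial maps rather than requiring a homotopy, so the ``easy check of the definitions'' referred to in the text is literally that. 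I would then conclude by noting that since the map is a weak equivalence of $\aE$-spaces and the $\aE$-structures match, we have identified $EN\aX$ with $N\aP\aX$ as objects in the homotopy category of $E_\infty$-spaces, which is exactly Proposition~\ref{propcomp}.
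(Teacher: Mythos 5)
Your proposal is correct and matches the paper's (very terse) argument exactly: the weak equivalence is Thomason's Lemma~\ref{lemThomason} applied to $A\aX$ (using $NA\aX = AN\aX$), and the $\aE$-compatibility is the direct simplex-level check that the Thomason map intertwines the concatenation/permutation functors $\sigma\colon\aA^{\times n}\to\aA$ with the strict permutative product on $\aP\aX$, which the paper dismisses as ``clear from explicit description of the map in that section.'' The only nitpick is the slip ``$F=N\aX$'' where you mean $F=A\aX$, which your subsequent identification makes clear.
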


Next we define the $\Gamma$-space version of the functor $EX$.  For this
we use the $\Gamma$-spaces $X_{n}$ defined by
\[
X_{n}(\fpm)=X(\fpn\fpm),
\]
where we use lexicographical ordering to make $\fpn\fpm$ a functor of
$\fpm$ from $\aF$ to $\aF$.  Taking advantage of the fact that
$\fpn\fpm$ is also a functor of $\fpn$, the construction $EX_{(-)}$
defines a functor from $\aF$ to simplicial sets.  However,
since we require $\Gamma$-spaces to satisfy $X(\fpz)=*$, we need a
reduced version.

\begin{defn}
Let $E^{\Gamma}X$ be the $\Gamma$-space with $E^{\Gamma}X(\fpn)$ the
based homotopy colimit in the category of based simplicial sets 
\[
E^{\Gamma}X(\fpn)=\hocolim^{*}_{\aA}AX_{n}.
\]
\end{defn}

The inclusions $\eta_{n}\colon X_{n}(\fpo)\to
E^{\Gamma}X(\fpn)$ now assemble to a map of $\Gamma$-spaces $X\to
E^{\Gamma}X$.
Since $\aA$ has an initial object $()$, the nerve $N\aA$ is
contractible.  The map
\[
EX_{n}=\hocolim_{\aA}AX_{n}\to (\hocolim_{\aA}AX_{n})/N\aA
=\hocolim^{*}_{\aA}AX_{n} = E^{\Gamma}X(\fpn)
\]
is therefore a weak equivalence.  Applying Theorem~\ref{thmtechspace}
objectwise to the map $X\to E^{\Gamma}X$, we get the following theorem.

\begin{thm}\label{thmtechgamma}
$E^{\Gamma}$ preserves weak equivalences.  If $X$ is a special $\Gamma$-space, then the natural map $X\to E^{\Gamma}X$ is a weak equivalence.
\end{thm}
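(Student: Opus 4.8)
The plan is to deduce both assertions from Theorem~\ref{thmtechspace}, applied to the family of $\Gamma$-spaces $X_{n}$, together with the weak equivalence $EX_{n}\to E^{\Gamma}X(\fpn)$ already noted above (coming from the contractibility of $N\aA$). First I would record that this weak equivalence, which I will call $q_{n}\colon EX_{n}\to E^{\Gamma}X(\fpn)$, is natural in $X$, and that under it the $n$-th component $\eta_{n}\colon X(\fpn)\to E^{\Gamma}X(\fpn)$ of the natural map $X\to E^{\Gamma}X$ is identified with the inclusion $X_{n}(\fpo)=AX_{n}(1)\hookrightarrow EX_{n}$ appearing in Theorem~\ref{thmtechspace}; here we use $X_{n}(\fpo)=X(\fpn\fpo)=X(\fpn)$.

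For the first statement, a weak equivalence $f\colon X\to Y$ of $\Gamma$-spaces induces, upon precomposing with the functor $\fpm\mapsto\fpn\fpm$, an objectwise weak equivalence $X_{n}\to Y_{n}$ of $\Gamma$-spaces for each $n$ (note $X_{n}(\fpz)=X(\fpz)=*$). Since $E$ preserves weak equivalences (Theorem~\ref{thmtechspace}), $EX_{n}\to EY_{n}$ is a weak equivalence; combining with the naturality of $q_{n}$ and two-out-of-three, $E^{\Gamma}X(\fpn)\to E^{\Gamma}Y(\fpn)$ is a weak equivalence for every $n$, so $E^{\Gamma}f$ is a weak equivalence.

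For the second statement, the key point is that $X_{n}$ is special whenever $X$ is. Indeed, $X_{n}(\fpm)=X(\fpn\fpm)$ with $\fpn\fpm\iso\fpn\sma\fpm\iso\fpn\vee\dotsb\vee\fpn$ ($m$ summands), and under these identifications the canonical map $X_{n}(\fpm)\to X_{n}(\fpo)^{\times m}=X(\fpn)^{\times m}$ becomes the canonical map $X(\fpn\vee\dotsb\vee\fpn)\to X(\fpn)^{\times m}$; this is a weak equivalence because $X$ is special (iterate the weak equivalences $X(\fps{a}\vee\fps{b})\to X(\fps{a})\times X(\fps{b})$, which hold for special $X$ since $X(\fpz)=*$). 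Granting this, Theorem~\ref{thmtechspace} gives that $X_{n}(\fpo)\hookrightarrow EX_{n}$ is a weak equivalence, and composing with $q_{n}$ shows that $\eta_{n}\colon X(\fpn)\to E^{\Gamma}X(\fpn)$ is a weak equivalence for every $n$; hence $X\to E^{\Gamma}X$ is a weak equivalence.

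The only content beyond quoting Theorem~\ref{thmtechspace} is the verification that $X_{n}$ is special: one must check that, under the identification $\fpn\fpm\iso\fpn\vee\dotsb\vee\fpn$, the indicator maps $\fpm\to\fpo$ used to build the ``specialness'' structure map of $X_{n}$ correspond to the wedge projections $\fpn\vee\dotsb\vee\fpn\to\fpn$ used to build an iterated specialness map of $X$. I expect this compatibility to be the main (and essentially only) obstacle, and it is routine.
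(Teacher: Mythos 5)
Your proposal is correct and is essentially the paper's own argument: the paper likewise deduces both claims by applying Theorem~\ref{thmtechspace} objectwise to the $\Gamma$-spaces $X_{n}$ and composing with the weak equivalence $EX_{n}\to E^{\Gamma}X(\fpn)$ coming from the contractibility of $N\aA$. The one point you spell out that the paper leaves implicit --- that $X_{n}$ is special when $X$ is, via the identification $\fpn\fpm\iso\fpn\vee\dotsb\vee\fpn$ --- is indeed the only thing to check, and your verification of it is fine.
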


We prove the following theorem at the end of the section.

\begin{thm}\label{thmspecial}
For a $\Gamma$-space $X$, $E^{\Gamma}X$ is a special $\Gamma$-space.
\end{thm}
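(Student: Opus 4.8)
The plan is to show that for each $n$ the canonical (indicator) map
\[
E^{\Gamma}X(\fpn)\to E^{\Gamma}X(\fpo)^{\times n}
\]
is a weak equivalence. Since $E^{\Gamma}X(\fpn)\htp EX_{n}=\hocolim_{\aA}AX_{n}$ (as noted just before Theorem~\ref{thmtechgamma}, because $N\aA$ is contractible), it suffices to prove that the map
\[
\hocolim_{\aA}AX_{n}\to \Bigl(\hocolim_{\aA}AX_{1}\Bigr)^{\times n}
\]
induced by the $n$ indicator maps $\fpn\to\fpo$ is a weak equivalence. First I would unwind the identification $X_{n}(\fpm)=X(\fpn\fpm)$. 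Using the lexicographic decomposition of $\fpn\fpm$ together with the canonical maps $X(\fpn\fpm)\to X(\fpm)^{\times n}$ (indicator maps on the first coordinate of the product $\fpn\times\fpm$), one gets a natural transformation of $\aA$-diagrams $AX_{n}\to (AX_{1})^{\times n}$ which on homotopy colimits realizes the map above. The key point is that this map is \emph{degreewise} an isomorphism of simplicial sets on the nose: $X_{n}(\fpm)=X(\fpn\fpm)$ and $X(\fpn\fpm)$ is \emph{not} literally $X(\fpm)^{\times n}$ without a specialness hypothesis on $X$ — which we do not have — so this naive approach fails, and the real content is elsewhere.

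The correct approach is the one used for Theorem~\ref{thmtech}/Theorem~\ref{thmtechspace}: compare the $\aA$-diagram to an $\aN$-diagram via the adjoint pair $\eta\colon\aN\to\aA$, $\epsilon\colon\aA\to\aN$. Concretely, for each fixed $n$ I would introduce the functor $BX_{n}$ on $\aA$ with $BX_{n}(\aom)=X_{n}(\epsilon(\aom))=X(\fpn\cdot\epsilon(\aom))$; the partition maps give a natural transformation $BX_{n}\to AX_{n}$. Unlike in Theorem~\ref{thmtech}, this map is \emph{not} an objectwise weak equivalence (that used specialness of $\aX$), so we cannot simply replace $AX_{n}$ by $BX_{n}$. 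Instead, the structure of the argument has to be: show directly that $\hocolim_{\aA}AX_{n}\to\hocolim_{\aA^{\times n}}(AX_{1})^{\times n}\iso(\hocolim_{\aA}AX_{1})^{\times n}$ is a weak equivalence by exhibiting, up to the contractibility of $N\aA$, an explicit homotopy inverse built from concatenation. The natural transformation $AX_{1}^{\times n}\to AX_{n}$ covering the concatenation functor $\aA^{\times n}\to\aA$, $(\aom^{1},\dotsc,\aom^{n})\mapsto(\aom^{1},\dotsc,\aom^{n})$ — using the identification $X(\fpm_{1})\times\dotsb$ inside $X(\fpn\cdot(\text{sum}))$ via the canonical inclusion-on-blocks map $X(\fpm)\to X(\fpn\fpm)$ that is a section of an indicator map — induces a map $(\hocolim_{\aA}AX_{1})^{\times n}\to\hocolim_{\aA}AX_{n}$, and I would check the two composites are homotopic to the respective identities using natural transformations in $\aA$ (on one side, the partition/indicator natural transformation collapsing a length-$ns$ sequence; on the other, bare contractibility of $N\aA$ swallowing the discrepancy), exactly as in the proof of Theorem~\ref{thmtech} where the composite $\eta\circ\epsilon$ was connected to the identity by the partition maps.

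The main obstacle is precisely that $X$ is \emph{arbitrary}, so none of the objectwise maps in sight are weak equivalences; everything must come from manipulating the indexing category $\aA$ and exploiting that $N\aA\htp *$. I expect the cleanest route is: (1) reduce to the unbased homotopy colimits $EX_{n}$ via $N\aA\htp *$; (2) build the comparison map $EX_{n}\to(EX_{1})^{\times n}$ and a candidate inverse from concatenation, both at the level of $\aA$-diagrams; (3) verify the two round-trip composites agree with the identity in the homotopy category by producing the relevant natural transformations of functors $\aA\to\aA$ (respectively $\aA^{\times n}\to\aA^{\times n}$) together with compatible natural transformations of the diagrams covering them, which give homotopies after taking $\hocolim$; (4) observe this map is the indicator map, so $E^{\Gamma}X$ is special. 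Step (3) is where the work lies — I would mirror the bookkeeping in the proof of Theorem~\ref{thmtech} verbatim, with $\fpn\cdot(-)$ inserted throughout, and note that this is exactly the reason the section remarks "The proof of the remainder of the following theorem is identical to the proof of Theorem~\ref{thmtech}"; here one additionally needs that smashing with $\fpn$ commutes with the relevant finite products of based sets up to the canonical comparison maps, which is a routine check.
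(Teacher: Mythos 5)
Your overall architecture matches the paper's: reduce to $EX_{n}$ via contractibility of $N\aA$, produce the comparison map from the diagonal functor on $\aA$ and a candidate inverse from concatenation, and try to connect the round-trip composites to the identity by natural transformations of functors on $\aA$ covered compatibly by natural transformations of the diagrams. (The paper does the two-factor version $EX_{j+1}\to EX_{j}\times EX$ and induction rather than the $n$-fold map all at once, but that is cosmetic.) You also correctly diagnose that the $B\aX$ trick from Theorem~\ref{thmtech} is unavailable without specialness.

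However, there is a genuine gap exactly at the step you defer to ``mirroring the bookkeeping of Theorem~\ref{thmtech} verbatim.'' That bookkeeping does not transfer. In Theorem~\ref{thmtech} the composite $\eta\circ\epsilon$ is connected to the identity by a single natural transformation, and compatibility of the covering maps is automatic because $B\aX(\eta\circ\epsilon(\aon))=B\aX(\aon)$ on the nose. Here the round-trip on $EX_{j+1}$ is covered by the map $d$ that splits $[j+1]$ along $[j]\amalg\{j+1\}$, i.e., it acts in the $\fps{j+1}$-coordinate of $X(\fps{(j+1)}\fpm)$ --- a coordinate that morphisms of $\aA$ cannot touch, since $\aA$ only acts through the $\fpm$-coordinate. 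Consequently no natural transformation between the doubling functor $D(\aom)=\aom\boxbin\aom$ and the identity is compatibly covered, and the paper must interpose the functor $H(\aom)=((j+1)m_{1},\dotsc,(j+1)m_{r})$, covered by the diagonal $[j+1]\to[(j+1)^{2}]$, and use the zig-zag of natural transformations $D\from H\to \mathrm{Id}$ (via codiagonal/partition maps) to produce the two homotopies. This intermediate $(H,h)$ is the essential new idea of the proof and is absent from your outline. Relatedly, your suggestion that on one side ``bare contractibility of $N\aA$'' swallows the discrepancy is not a valid mechanism: contractibility of the nerve of the indexing category does not make self-maps of $\hocolim_{\aA}$ homotopic to the identity. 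Finally, the remark ``the proof of the remainder of the following theorem is identical to the proof of Theorem~\ref{thmtech}'' refers to Theorem~\ref{thmtechspace}, not to Theorem~\ref{thmspecial}, which gets its own, genuinely different, argument.
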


Finally, we need one further variant of this construction.  Let
$AEX$ be the functor from $\aA$ to simplicial sets with
\[
AEX(\aon)=EX_{n_{1}}\times \dotsb \times EX_{n_{s}}
\]
and $AE()=EX_{0}=E*$: Although $EX_{(-)}$ is not a $\Gamma$-space, it is
an $\aF$-space (functor from $\aF$ to simplicial sets), and this is
all that is needed for the construction of the functor $AEX$.  Let
$E^{2}X$ be the simplicial set 
\[
E^{2}X=\hocolim_{\aA}AEX
\]
(homotopy colimit in the category of unbased simplicial sets).  The
map of $\aF$-spaces $EX_{(-)}\to E^{\Gamma}X(-)$ induces a weak
equivalence $AEX\to AE^{\Gamma}X$ and a weak equivalence $E^{2}X\to
E(E^{\Gamma}X)$.  

The advantage of $E^{2}X$ over $E(E^{\Gamma}X)$ is that we can
construct a map $E^{2}X\to EX$ as follows.  For each $\aom$ in $\aA$,
we have a map 
\begin{multline*}
AEX(\aom)=EX_{m_{1}}\times \dotsb \times EX_{m_{r}}
\iso 
\hocolim_{\aA^{\times r}}(AX_{m_{1}}\times \dotsb \times AX_{m_{r}})\\
\to
\hocolim_{\aA}AX=EX
\end{multline*}
induced by the functor $\rho_{\aom}\colon \aA^{\times r}\to \aA$,
defined by
\[
\rho_{\aom}\colon 
(\aon_{1},\dotsc,\aon_{r})\mapsto 
(m_{1}n_{1,1},\dotsc,m_{1}n_{1,s_{1}},m_{2}n_{2,1},\dotsc,m_{r}n_{r,s_{r}})
\]
(where $\aon_{i}=(n_{i,1},\dotsc,n_{i,s_{i}})$), together with the
canonical isomorphism
\[
AX_{m_{1}}(\aon_{1})\times \dotsb \times AX_{m_{r}}(\aon_{r})
\iso AX(\rho_{\aom}(\aon_{1},\dotsc,\aon_{r}))
\]
covering $\rho_{\aom}$.  These maps are compatible with maps $\aom$ in
$\aA$, and so induce a map $\alpha \colon E^{2}X\to EX$.  The
technical fact about this map we need is the following lemma, which is
an easy check of the construction.

\begin{lem}\label{lemtechspace}
The diagram
\[
\xymatrix{%
EX\ar[r]^{\eta}\ar[d]_{E\eta}\ar[dr]^{\id}&E^{2}X\ar[d]^{\alpha}\\
E^{2}X\ar[r]_{\alpha}&EX
}
\]
commutes where $\eta$ is induced by the inclusion of $EX$ as $AEX(1)$
and $E\eta$ is induced by the inclusion of $AX$ in $AEX$.
\end{lem}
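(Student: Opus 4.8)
The plan is to verify the two triangular identities encoded by the diagram, namely $\alpha\circ\eta=\id_{EX}$ and $\alpha\circ E\eta=\id_{EX}$, by unwinding on the level of simplices the definitions of $\eta$, $E\eta$, $\alpha$ and the functors $\rho_{\aom}$ of the construction of $\alpha$. Write $g_{\aom}\colon AEX(\aom)\to EX$ for the maps of that construction, so that $\alpha$ restricts to $g_{\aom}$ on the image of the canonical inclusion $AEX(\aom)\to E^{2}X=\hocolim_{\aA}AEX$. Two combinatorial facts drive everything. First, $\fpo\fpm=\fpm$, so $X_{1}=X$, hence $AX_{1}=AX$, $EX_{1}=EX$, and $\rho_{(1)}\colon\aA\to\aA$ is literally the identity functor, since $\rho_{(1)}(n_{1},\dots,n_{s})=(1\cdot n_{1},\dots,1\cdot n_{s})=(n_{1},\dots,n_{s})$, with the accompanying isomorphism $AX_{1}(\aon)\iso AX(\rho_{(1)}(\aon))=AX(\aon)$ equal to the identity. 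Second, for a general object $\aom=(m_{1},\dots,m_{r})$ one has $\rho_{\aom}((1),\dots,(1))=(m_{1},\dots,m_{r})=\aom$, again with the coherence isomorphism $AX_{m_{1}}((1))\times\dots\times AX_{m_{r}}((1))\iso AX(\aom)$ equal to the identity.

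Granting this, $\alpha\circ\eta=\id_{EX}$ is immediate: $\eta$ is by definition the canonical inclusion of $AEX((1))=EX_{1}=EX$ into $E^{2}X$, so $\alpha\circ\eta=g_{(1)}$, and $g_{(1)}$ is the self-map of $EX$ built from $\rho_{(1)}=\id_{\aA}$ together with the identity isomorphism $AX_{1}\iso AX$; this self-map is $\id_{EX}$.

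For $\alpha\circ E\eta=\id_{EX}$ I would trace a simplex of $EX=\hocolim_{\aA}AX$ through the maps: such a simplex is a chain $\aom_{0}\to\dots\to\aom_{q}$ in $\aA$ together with a simplex $x$ of $AX(\aom_{0})$. The map $E\eta$ is $\hocolim_{\aA}$ of the natural transformation $\nu\colon AX\to AEX$ which on $\aom=(m_{1},\dots,m_{r})$ is the product of the canonical inclusions $X(\fpm_{i})=AX_{m_{i}}((1))\hookrightarrow EX_{m_{i}}$ at the object $(1)\in\aA$; hence $E\eta$ carries our simplex to the simplex of $E^{2}X$ consisting of the same chain $\aom_{0}\to\dots\to\aom_{q}$ together with the simplex $\nu_{\aom_{0}}(x)$ of $AEX(\aom_{0})$, and under the identification $AEX(\aom_{0})\iso\hocolim_{\aA^{\times r}}(AX_{m_{1}}\times\dots\times AX_{m_{r}})$ this simplex is the constant chain at $((1),\dots,(1))$ paired with $x$. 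Applying $\alpha$ pushes this internal chain forward along $\rho_{\aom_{0}}$; since $\rho_{\aom_{0}}((1),\dots,(1))=\aom_{0}$ it becomes the constant chain at $\aom_{0}$, which $\alpha$ absorbs into the external chain $\aom_{0}\to\dots\to\aom_{q}$ without altering it, while the coherence isomorphism carries $x$ back to $x$. Thus $\alpha\circ E\eta$ returns the original simplex $(\aom_{0}\to\dots\to\aom_{q},x)$, so $\alpha\circ E\eta=\id_{EX}$.

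The one place where genuine care is needed is the bookkeeping of the two ``nerve directions'' of the nested homotopy colimits building $E^{2}X$: one must check against the explicit form of $\alpha$ that a degenerate internal chain is absorbed without disturbing the external chain (this is what makes $\alpha\circ E\eta$ the identity) and, dually, that a degenerate external chain together with $\rho_{(1)}=\id_{\aA}$ reproduces the internal chain (this is what makes $\alpha\circ\eta$ the identity). Once $\alpha$ is written out on simplices as in the construction, both checks are routine.
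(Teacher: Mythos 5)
Your proposal is correct and is precisely the ``easy check of the construction'' that the paper asserts without writing out: the two key identities $\rho_{(\ts)}=\id_{\aA}$ and $\rho_{\aom}((1),\dotsc,(1))=\aom$, with identity coherence isomorphisms, are exactly what make both triangles commute on the nose. Your closing remark correctly isolates the only delicate point (tracking how $\alpha$ merges the external chain with the pushed-forward internal chain of the nested homotopy colimit), and your treatment of it is sound.
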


Applying the lemma to $X_{n}$, we get a commuting diagram of
$\aF$-spaces.  We can turn this into a diagram of $\Gamma$-spaces by
taking the quotient by $EX_{0}=E*$ or $E^{2}X_{0}=E^{2}*$ at each
spot.  We then get a commutative diagram of $\Gamma$-spaces
\[
\xymatrix{%
E^{\Gamma}X\ar[r]^{\eta}\ar[d]_{E\eta}\ar[dr]^{\id}
&E^{2}X_{(-)}/E^{2}*\ar[d]^{\alpha}\\
E^{2}X_{(-)}/E^{2}*\ar[r]_{\alpha}&E^{\Gamma}X.
}
\]
We note that $E^{2}*$ is contractible, and since $E^{\Gamma}X$ is
special,  $\eta$ is weak equivalence.  It follows that all maps in the
diagram are weak equivalences.  Since both 
\[
\eta \colon E^{\Gamma}X\to E^{\Gamma}E^{\Gamma}X
\qquad \text{and}\qquad
E^{\Gamma}\eta \colon E^{\Gamma}X\to E^{\Gamma}E^{\Gamma}X
\]
factor through the corresponding map $E^{\Gamma}X\to E^{2}X/E^{2}*$, we
get the following proposition.

\begin{prop}
The maps $\eta$ and $E^{\Gamma}\eta$ from $E^{\Gamma}X$ to
$E^{\Gamma}E^{\Gamma}X$ coincide in the 
strict homotopy category of $\Gamma$-spaces, i.e., the homotopy category
obtained by formally inverting the objectwise weak equivalences.
\end{prop}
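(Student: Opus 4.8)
The plan is to read the statement off the commutative square of $\Gamma$-spaces displayed immediately above it. In that square both composites around the outside equal $\id_{E^{\Gamma}X}$, and every arrow is an objectwise weak equivalence: $E^{2}*$ is contractible and $E^{\Gamma}X$ is special by Theorem~\ref{thmspecial}, so $\eta\colon E^{\Gamma}X\to E^{2}X_{(-)}/E^{2}*$ is an objectwise weak equivalence; two-out-of-three applied to $\alpha\circ\eta=\id$ makes $\alpha$ an objectwise weak equivalence, and a second application to $\alpha\circ E\eta=\id$ makes $E\eta$ one as well.

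First I would pass to the strict homotopy category of $\Gamma$-spaces, in which $\alpha\colon E^{2}X_{(-)}/E^{2}*\to E^{\Gamma}X$ becomes an isomorphism. From $\alpha\circ\eta=\id_{E^{\Gamma}X}$ we get $\eta=\alpha^{-1}$ there, and from $\alpha\circ E\eta=\id_{E^{\Gamma}X}$ we get $E\eta=\alpha^{-1}$; hence the two maps $\eta,E\eta\colon E^{\Gamma}X\to E^{2}X_{(-)}/E^{2}*$ coincide in the strict homotopy category.

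Next I would fix the weak equivalence $w\colon E^{2}X_{(-)}/E^{2}*\to E^{\Gamma}E^{\Gamma}X$ built from the map of $\aF$-spaces $EX_{(-)}\to E^{\Gamma}X(-)$: it induces objectwise weak equivalences $AEX\to AE^{\Gamma}X$ and $E^{2}X_{(-)}\to E(E^{\Gamma}X_{(-)})$, compatibly with the reductions that turn these $\aF$-spaces into $\Gamma$-spaces, using $(E^{\Gamma}X)_{n}=E^{\Gamma}(X_{n})$ and the weak equivalences $EY_{n}\to EY_{n}/N\aA=E^{\Gamma}Y(\fpn)$. Then, unwinding the definitions of the two maps in the square via Lemma~\ref{lemtechspace} --- $\eta$ induced levelwise by the inclusion of $EX$ as $AEX(1)$, and $E\eta$ induced levelwise by the inclusion of $AX$ in $AEX$ --- I would check that $w\circ\eta$ is the natural map $\eta\colon E^{\Gamma}X\to E^{\Gamma}E^{\Gamma}X$ and $w\circ E\eta$ is $E^{\Gamma}\eta\colon E^{\Gamma}X\to E^{\Gamma}E^{\Gamma}X$. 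Post-composing the equality $\eta=E\eta$ from the previous step with $w$ then gives $\eta=E^{\Gamma}\eta$ in the strict homotopy category, which is the proposition.

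The hard part will be this last identification: matching the functors $\rho_{\aom}\colon\aA^{\times r}\to\aA$ and the canonical isomorphisms that enter the construction of $\alpha$ against the defining formulas for $AE^{\Gamma}X$, for $A$ applied to a $\Gamma$-space, and for $E^{\Gamma}$. I expect this to be a bookkeeping check with no homotopy-theoretic content --- all the homotopy theory having been absorbed into the two-out-of-three arguments above and into Theorems~\ref{thmtechspace} and~\ref{thmspecial} --- so it should be the main but routine obstacle.
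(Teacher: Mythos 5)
Your proposal is correct and follows essentially the same route as the paper: the paper's (very terse) proof likewise reads the equality off the commutative square, notes that all four maps are objectwise weak equivalences, and then observes that $\eta$ and $E^{\Gamma}\eta$ into $E^{\Gamma}E^{\Gamma}X$ factor through the corresponding maps into $E^{2}X_{(-)}/E^{2}*$ followed by the weak equivalence to $E^{\Gamma}E^{\Gamma}X$. The factorization/bookkeeping step you flag as the ``hard part'' is exactly the point the paper asserts without further detail.
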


We use this observation to prove the following theorem, which together with
Theorems~\ref{thmtechgamma} and~\ref{thmspecial} imply
Theorem~\ref{maingamma}. 

\begin{thm}
For any $\Gamma$-space $X$,
$\eta \colon X\to E^{\Gamma}X$ is a stable equivalence.  Moreover 
$\eta$ is the initial map from $X$ to a special $\Gamma$-space in the
strict homotopy category of $\Gamma$-spaces.
\end{thm}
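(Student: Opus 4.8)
The plan is to prove the two assertions in turn: first that $\eta\colon X\to E^{\Gamma}X$ is a stable equivalence, and then that it is initial among maps from $X$ to a special $\Gamma$-space in the strict homotopy category. For the stable equivalence statement, the strategy is to exhibit an inverse to $\eta^{*}$ on $\Gamma$-homotopy classes of maps into a very special target, exactly mirroring the categorical argument in the proof of Theorem~\ref{maincat}. Concretely, fix a very special $\Gamma$-space $Z$ and consider the map $\eta^{*}\colon [E^{\Gamma}X,Z]\to[X,Z]$, where $[-,-]$ denotes maps in the strict homotopy category (obtained by inverting objectwise weak equivalences). Because $E^{\Gamma}$ preserves weak equivalences (Theorem~\ref{thmtechgamma}) and because $\eta\colon Y\to E^{\Gamma}Y$ is an objectwise weak equivalence when $Y$ is special (same theorem), the functor $E^{\Gamma}$ descends to the strict homotopy category and $\eta$ becomes an isomorphism there on special objects. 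Using this, I would define a candidate inverse $R\colon[X,Z]\to[E^{\Gamma}X,Z]$ sending $f\colon X\to Z$ to the composite
\[
E^{\Gamma}X\overset{E^{\Gamma}f}{\longrightarrow}E^{\Gamma}Z\overset{\eta^{-1}}{\longrightarrow}Z,
\]
where $\eta^{-1}$ makes sense since $Z$, being very special, is in particular special.

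**Verifying $R$ is a two-sided inverse.** The two composites $R\circ\eta^{*}$ and $\eta^{*}\circ R$ are each shown to be the identity by a naturality-of-$\eta$ argument together with the coincidence of $\eta$ and $E^{\Gamma}\eta$ in the strict homotopy category (the Proposition just above the theorem). For $\eta^{*}\circ R$: given $f\colon X\to Z$, $R(f)\circ\eta = \eta^{-1}\circ E^{\Gamma}f\circ\eta = \eta^{-1}\circ\eta\circ f = f$ by naturality of $\eta\colon \mathrm{Id}\to E^{\Gamma}$ applied to $f$. For $R\circ\eta^{*}$: given $g\colon E^{\Gamma}X\to Z$, we must show $\eta^{-1}\circ E^{\Gamma}(g\circ\eta)\circ\eta = g$ in $[E^{\Gamma}X,Z]$. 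Naturality of $\eta$ applied to $g$ gives $E^{\Gamma}g\circ\eta_{E^{\Gamma}X}=\eta_{Z}\circ g$, reducing the left side to $\eta^{-1}\circ\eta_{Z}\circ g\circ E^{\Gamma}\eta\circ\eta^{-1}_{?}$; here one uses the key Proposition that $\eta$ and $E^{\Gamma}\eta$ from $E^{\Gamma}X$ to $E^{\Gamma}E^{\Gamma}X$ agree in the strict homotopy category, so that the trailing $E^{\Gamma}\eta$ can be replaced by the iterated-$\eta$ map that cancels. This is the same bookkeeping as in the two commuting diagrams in the proof of Theorem~\ref{maincat}, now carried out one categorical level down in simplicial sets; I expect this identification of the two composites to be the main obstacle, since it requires being careful about which map $\eta\colon E^{\Gamma}X\to E^{\Gamma}E^{\Gamma}X$ is meant at each stage and invoking the Proposition at precisely the right spot.

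**Initiality.** For the second assertion, the observation is that stable equivalence plus the universal-mapping property of group completion is essentially automatic once one knows $E^{\Gamma}X$ is special (Theorem~\ref{thmspecial}) and $\eta$ is a stable equivalence. Given any special $\Gamma$-space $Y$ and any map $h\colon X\to Y$ in the strict homotopy category, one forms $E^{\Gamma}h\colon E^{\Gamma}X\to E^{\Gamma}Y$; since $Y$ is special, $\eta_{Y}\colon Y\to E^{\Gamma}Y$ is an objectwise weak equivalence, hence invertible in the strict homotopy category, so $\bar h:=\eta_{Y}^{-1}\circ E^{\Gamma}h$ is a map $E^{\Gamma}X\to Y$ with $\bar h\circ\eta_{X}=\eta_{Y}^{-1}\circ E^{\Gamma}h\circ\eta_{X}=\eta_{Y}^{-1}\circ\eta_{Y}\circ h=h$ by naturality of $\eta$; this gives existence of a factorization. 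Uniqueness: if $k\colon E^{\Gamma}X\to Y$ also satisfies $k\circ\eta_{X}=h$, then applying $E^{\Gamma}$ and using naturality of $\eta$ on both $\bar h$ and $k$ together with the Proposition (to handle $E^{\Gamma}\eta_{X}$ versus $\eta_{E^{\Gamma}X}$) forces $\bar h=k$ in the strict homotopy category — again the same cancellation pattern as above. Thus $\eta\colon X\to E^{\Gamma}X$ is the initial object of the under-category of special $\Gamma$-spaces under $X$ in the strict homotopy category, completing the proof.
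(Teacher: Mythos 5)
Your proposal is correct and follows essentially the same route as the paper: define the retraction $R(f)=\eta_Z^{-1}\circ E^{\Gamma}f$ using that $E^{\Gamma}$ descends to the strict homotopy category and that $\eta$ is invertible on special objects, verify it is a two-sided inverse via naturality of $\eta$ together with the proposition identifying $\eta$ and $E^{\Gamma}\eta$ on $E^{\Gamma}X$. The only difference is organizational: the paper proves the bijection $[E^{\Gamma}X,Z]\to[X,Z]$ once for all special $Z$, which yields both the initiality claim and (by restricting to very special $Z$) the stable equivalence, whereas you run the same argument twice.
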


\begin{proof}
We need to show that for
any special $\Gamma$-space $Z$, the map $\eta$ induces a bijection
$[E^{\Gamma}X,Z]\to [X,Z]$ where $[-,-]$ denotes maps in the strict homotopy
category of $\Gamma$-spaces.  Since $E^{\Gamma}$ preserves weak
equivalences, it induces a functor on the strict homotopy category.
Given a map $g$ in $[X,Z]$, $E^{\Gamma}g$ is a map in
$[E^{\Gamma}X,E^{\Gamma}Z]$, and since $\eta \colon Z\to EZ$ is a weak
equivalence, we can compose with the map $\eta^{-1}$ in the strict
homotopy category to get an element $Rg=\eta^{-1}\circ E^{\Gamma}g$ in
$[E^{\Gamma}X,Z]$. By
naturality of $\eta$, $R$ is a retraction. By
examination of the solid arrow commuting diagram 
\[
\xymatrix{%
X\ar[r]^{\eta}
&E^{\Gamma}X\ar[r]^{g}\ar[d]^{\eta}_{\sim}
&Z\ar[d]^{\eta}_{\sim}\\
E^{\Gamma}X\ar[r]_{E^{\Gamma}\eta}\ar@{..>}[ur]^{\id}
&E^{\Gamma}E^{\Gamma}X\ar[r]_{E^{\Gamma}g}&E^{\Gamma}Z
}
\]
and applying the previous proposition, we see that $R$ is a bijection.
\end{proof}

The previous theorem also provides the final piece for the proof of Theorem~\ref{mainfouruncomp}.

\begin{proof}[Proof of Theorem~\ref{mainfouruncomp}]
The equivalence of~\enref{enmfc} and~\enref{enmfd} is
Theorem~\ref{thmuncomp} proved in the last section.  The previous
theorem proves the equivalence of~\enref{enmfb} and~\enref{enmfc}, and
\cite[1.8]{MayThomason} (and the argument for \cite[1.1]{KrizMay})
prove the equivalence of~\enref{enmfa} and~\enref{enmfc}.
\end{proof}

We close with the proof of Theorem~\ref{thmspecial}.  We thank Irene
Sami for help putting together this argument.

\begin{proof}[Proof of Theorem~\ref{thmspecial}]
It suffices to show that for every $j>0$, the map
\begin{equation}\label{eqpfspec}
EX_{j+1}\to EX_{j}\times EX
\end{equation}
is a weak equivalence.  Using $EX_{j}$ in place of
$E^{\Gamma}X(\fps{j})$ has the advantage that we can write
$EX_{j}\times EX$ as a homotopy colimit:
\[
EX_{j}\times EX\iso \hocolim_{\aA\times \aA}(AX_{j}\times AX).
\]
For clarity in formulas that follow, we will use brackets $[m]$ rather
than bold $\fpm$ to denote finite based sets.

The map~\eqref{eqpfspec} is induced by the diagonal functor
$\aA\to\aA\times \aA$ and the natural transformation
\[
AX_{j+1}(\aom)\to AX_{j}(\aom)\times AX(\aom).
\]
We get a map
\begin{equation}\label{eqinvspec}
EX_{j}\times EX\to EX_{j+1}
\end{equation}
induced by the concatenation functor $\aA\times \aA\to \aA$ and the
natural transformation
\[
AX_{j}(\aom)\times AX(\aon)\to AX_{j+1}(\aom\boxbin \aon)
\]
(where $\boxbin$ denotes concatenation), sending 
\[
X([jm_{1}])\times \dotsb \times X([jm_{r}])\to 
X([(j+1)m_{1}])\times \dotsb \times X([(j+1)m_{r}]) 
\]
by the map induced by the inclusion of 
$[j]$ in $[j+1]$, and the map
\[
X([n_{1}])\times \dotsb \times X([n_{s}])\to 
X([(j+1)n_{1}])\times \dotsb \times X([(j+1)n_{s}]) 
\]
induced by including the non-basepoint element $1$ of $[1]$ as the
element $j+1$ of  $[j+1]$.  We show that~\eqref{eqpfspec}
and~\ref{eqinvspec} are inverse generalized simplicial homotopy
equivalences.  

First we show that the composite on $E_{j+1}$ is (generalized
simplicial) homotopic to the identity.
We denote the composite on $E_{j+1}$ as $(D,d)$. It is induced by the
functor $D\colon \aA\to \aA$ that 
sends $\aom$ to the concatenation $\aom\boxbin\aom$ and the natural
transformation 
\[
d\colon X_{j+1}(m_{i})=
X([(j+1)m_{i}])\to X([(j+1)m_{i}])\times X([(j+1)m_{i}])
=X_{j+1}(m_{i},m_{i})
\]
induced in the first factor by sending the element
$j+1$ of
$[j+1]$ to the basepoint and induced in the second factor by
sending the elements $1,\dotsc,j$ of $[j+1]$ to the
basepoint.  

We construct a new map $(H,h)$ from $E_{n+1}$ to
itself and simplicial homotopies from $(H,h)$ to $(D,d)$ and from
$(H,h)$ to the identity as follows.
Let $H$ be the functor $\aA\to \aA$ that
sends $(m_{1},\dotsc,m_{r})$ to $((j+1)m_{1},\dotsc,(j+1)m_{r})$ and
let 
\begin{multline*}
h\colon AX_{j+1}(\aom)=AX((j+1)m_{1},\dotsc,(j+1)m_{r})\\
\to AX((j+1)^{2}m_{1},\dotsc,(j+1)^{2}m_{r})
=AX_{j+1}(H\aom)
\end{multline*}
be the natural transformation induced by the diagonal map in $\aF$
from $[j+1]$ 
to $[(j+1)^{2}]$; then the functor $H$ and natural transformation $h$
induce a map $(H,h)$ from $EX_{j+1}$
to itself.  

We have a natural transformation $\phi$ from $H$ to $D$ formed by
concatenation and permutation from the maps
\[
((j+1)m_{i}) \to (m_{i},m_{i})
\]
in $\aA$ sending collapsing the first $j$ copies of $\fum_{i}$ to the
first $\fum_{i}$ by the codiagonal map and sending the last copy of
$\fum_{i}$ onto the second $\fum_{i}$.  The composite map
\[
\xymatrix{%
AX_{j+1}(\aom)\ar[r]^{h}&AX_{j+1}(H\aom)\ar[r]^{AX_{j+1}(\phi)}
&AX_{j+1}(D\aom)
}
\]
is $d$.  Thus, the natural transformation $\phi$ induces a homotopy
between the maps $(H,h)$ and $(D,d)$ on the homotopy colimit
$E_{j+1}$.

Likewise, we have a natural
transformation $\psi$ from $H$ to the identity induced by the maps
$((j+1)m_{i})\to (m_{i})$ in $\aA$ that collapse the $j+1$
copies of $m_{i}$ by the codiagonal.  Since the composite 
\[
\xymatrix{%
AX_{j+1}(\aom)\ar[r]^{h}
&AX_{j+1}(H\aom)\ar[r]^{AX_{j+1}(\psi)}
&AX_{j+1}(\aom)
}
\]
is the identity, it follows that $\psi$ induces a homotopy 
from $H$ to the identity on $EX_{j+1}$.  This constructs the
generalized simplicial homotopy equivalence between the composite map
and the identity map on $EX_{j+1}$.

The argument for the other composite is easier: The composite on
$EX_{j}\times EX$ is induced by the functor $D_{2}$ from $\aA\times
\aA$ to itself
\[
D_{2}(\aom,\aon)=(\aom\boxbin \aon,\aom\boxbin \aon)
\]
and the natural transformation 
\[
d_{2}\colon AX_{j}(\aom)\times AX(\aon)\to
AX_{j}(\aom\boxbin \aon)\times AX(\aom\boxbin\aon)
\]
induced on the first factor by the inclusion of $\aom$ in
$\aom\boxbin\aon$ and on the second factor by the inclusion of $\aon$
in $\aom\boxbin\aon$.  Since these maps
\[
(\aom,\aon)\to(\aom\boxbin\aon,\aom\boxbin\aon)
\]
assemble to a natural transformation in $\aA\times \aA$ from the
identity functor to $D_{2}$, they induce a
homotopy on $EX_{j}\times EX$ between the identity and the map induced
by $D_{2},d_{2}$.
\end{proof}


\bibliographystyle{amsplain}
\def\noopsort#1{}\def\MR#1{}
\providecommand{\bysame}{\leavevmode\hbox to3em{\hrulefill}\thinspace}
\providecommand{\MR}{\relax\ifhmode\unskip\space\fi MR }
\providecommand{\MRhref}[2]{%
  \href{http://www.ams.org/mathscinet-getitem?mr=#1}{#2}
}
\providecommand{\href}[2]{#2}

\end{document}